\tikzset{
modal/.style={>=stealth’,shorten >=1pt,shorten <=1pt,auto,node distance=1.5cm,
semithick},
world/.style={circle,draw,minimum size=0.5cm,fill=gray!15},
point/.style={circle,draw,inner sep=0.5mm,fill=black},
reflexive above/.style={->,loop,looseness=7,in=120,out=60},
reflexive below/.style={->,loop,looseness=7,in=240,out=300},
reflexive left/.style={->,loop,looseness=7,in=150,out=210},
reflexive right/.style={->,loop,looseness=7,in=30,out=330}
}
\newtheorem{teo}{Theorem}[section]
\newtheorem{lemma}[teo]{Lemma}
\newtheorem{defi}[teo]{Definition}
\newtheorem{coro}[teo]{Corollary}
\newtheorem{fatto}[teo]{Fact}
\newtheorem{observation}[teo]{Observation}
\newcommand{\ZFC}{\ensuremath{\mathsf{ZFC}}}
\newcommand{\ZF}{\ensuremath{\mathsf{ZF}}}
\newcommand{\Z}{\overline{\mathsf{ZF}}}
\newcommand{\IZF}{\ensuremath{\mathsf{IZF}}}
\newcommand{\nff}{\mathrm{NFF}}
\newcommand{\bq}{\mathrm{BQ}_\varphi}
\newcommand{\dom}{\mathrm{dom}}
\newcommand{\ran}{\mathrm{ran}}
\newcommand{\VA}[1]{\mathbf{V}^{(#1)}}
\newcommand{\A}{\mathbb{A}}
\newcommand{\T}{\mathbb{T}}
\newcommand{\one}{\mathbf{1}}
\newcommand{\zero}{\mathbf{0}}
\newcommand{\V}{\mathbf{V}}
\newcommand{\co}{\mathsf{Cobounded}}
\newcommand{\s}{\mathbb{PS}_3}
\newcommand{\ps}{\mathrm{PS}_3}
\newcommand{\w}{\mathcal{W}^+}
\newcommand{\llaw}{\mathsf{LL}_\varphi}
\newcommand{\ba}{\mathrm{BA}}
\newcommand{\pa}{\mathrm{PA}}
\newcommand{\ha}{\mathrm{HA}}
\newcommand{\lb}{\llbracket}
\newcommand{\rb}{\rrbracket}
\title{$\ZF$ and its interpretations}
\author{S. Jockwich Martinez, S. Tarafder, G. Venturi}
\email{santijoxi@hotmail.com\\ souravt09@gmail.com \\gio.venturi@gmail.com}
\address{University of Campinas (UNICAMP), Bar\~ao Geraldo, SP, 13083-896, Brazil.}
\address{St. Xavier's College, 30 Mother Teresa Sarani, Kolkata 700016, India.}
\keywords{Boolean-valued models, Heyting-valued models, Algebra-valued models, \ZF, Non-classical Set Theory}
\subjclass[2000]{03E40, 03E70, 03G10}
\begin{document}

\maketitle

\begin{abstract}
 In this paper, we unify the study of classical and non-classical algebra-valued models of set theory, by studying variations of the interpretation functions for $=$ and $\in$. Although, these variations coincide with the standard interpretation in Boolean-valued constructions, nonetheless they extend the scope of validity of $\ZF$ to new algebra-valued models. This paper presents, for the first time, non-trivial paraconsistent models of full $\ZF$. Moreover, due to the validity of Leibniz's law in these structures, we will show how to construct proper models of set theory by quotienting these algebra-valued models with respect to equality,  modulo the filter of the designated truth-values.   
   
\end{abstract}

\tableofcontents

\section*{Introduction}

Forcing was introduced in 1963 by Paul Cohen. The revolution that this method brought to set theory cannot be understated. The many different models (or universes) of set theory that have been created since the Sixties had the effect of putting into question the central role of set theory as the foundations of mathematics, suggesting a relativity of the truth-value of many set-theoretical sentences. 

A reformulation of the forcing method in terms of Boolean-valued constructions was soon after proposed, during the Sixties, by Scott and, independently, Vop\v{e}nka, building on the algebraic work of Rasiowa and Sikorski. The two methods yield equivalent independence results, because of the bridge that the Stone duality theorem offers between partially ordered sets and Boolean algebras.  

The models of $\ZF$ that we obtain by forcing, as it is the case for Boolean-valued models, thus represent many different ways in which the universe of sets could be. The different and heterogeneous structures we obtain by this method constitute an embarrassment of riches that is responsible for a heated debate over which represent(s) the correct view on set theory.\footnote{For a general overview of the many positions in this debate see \cite{Antos2015-ANTMCI}, \cite{Venturiforcing}, and
\cite{Maddy2016-MADSF}.}  

In this paper, we show that this debate should not be limited only to the non-logical component of the first-order theory $\ZF$, since the class of structures that validate the axioms of $\ZF$ greatly exceeds the classical boundaries.\footnote{See \cite{TarVen2020} for the effect that the construction of non-classical models of (fragments of) $\ZF$ has on the notion of independence.} 
Indeed we will prove, in Theorem \ref{PAZF} and Lemma  \ref{lemma: (T, BA) is paraconsistent}, that there are non-trivial structures that satisfy simultaneously $\ZF$, a formula $\varphi$, and its negation $\neg \varphi$.
Moreover, in contrast to the paraconsistent models of set theory that we find in the literature, the ones constructed in this paper show that $\ZF$ holds consistently; meaning that their negation do not hold. 

The originality of this result stems from the novelty of the proposed approach. Indeed, the key ingredient of Theorem \ref{PAZF} is a syntactic modification of the axioms of $\ZF$ and of the interpretation of the basic relations of the language of set theory: equality ($=$) and set-membership ($\in$). These modifications  in a whole depend on the following propositional equivalence: $(A \to B) \equiv (A\to B) \land (\neg B \to \neg A)$.  

The reason for these changes is twofold. On the one hand, such a simple modification allows to overcome the limits of the previous approaches to algebra-valued models.
Indeed, it is only with the new interpretation function developed in this paper that we can validate all instances of the Separation, Collection, and Foundation axiom schema as given in Figure \ref{figure: Aximos of ZF}, together with  Leibniz's Law of indiscernibility of identicals. Moreover, it is this modified interpretation function that will force us to change the Extensionality axiom by supplementing it with its contrapositive (classically equivalent) counterpart. On the other hand, to resort to contrapositive versions of classical elements of Boolean-valued models has a deeper conceptual motivation. As a matter of fact, in this paper we find ourselves operating in a 
weak logical context, where classical equivalences may cease to hold. Therefore, information about sets which are classically encoded in the axioms and the interpretation of the basic relations of set theory may get lost in this new logical environment. Since a logic that tolerates contradictions should necessarily give a separate account of assertion and denial, it is then the task of the axiomatizer to describe both set-membership and non-set-membership. If this is done, in the classical case, only by committing to asserting what belongs to what (and then using the duality of logic to extend it to denial), this is not anymore the case within this weaker logical environment.

Besides the technical and conceptual motivations for the justification of the constructions presented in this paper, it is also worth noting that from a purely classical perspective these changes are innocuous. As a matter of fact, the resulting set theory is only a classically equivalent version of $\ZF$, whose equivalence can be already proved within classical propositional logic.

Another interesting point is that in changing the interpretation of equality and set-membership we do not change the underlying mathematical ontology. As a matter of fact, the definition of an algebra-valued universe $\VA{\A}$, based on an algebra $\A$, does not depend on the assignment function that we use to give a truth-value to the formulas of the extended language of set theory (as in the standard case, a language equipped with constants for every element of  $\VA{\A}$). This means that the change in the assignment function corresponds only to a change of interpretation of set theory and not to which sets there are. But then, the assignment functions described in this paper suggest that we can have different ways to interpret $\ZF$, that coincide in the classical case but diverge in non-classical contexts. 



Disregarding the intrinsic value one can attach to non-classical logics, this paper not only presents new ways to build models of $\ZF$, but it also shows that Boolean-valued models represent only a (small) fragment of a much larger class of models of set theory (classical and not).


The paper is organized as follows: In \S 1, we review some preliminaries and we introduce the main objects of study. In \S 2, we review the results on the validity of $\ZF$ under the standard assignment function as defined, for example, in \cite{Bell}. The core of the paper, \S 3, is devoted to the proof of Theorem \ref{PAZF} and to the study of a new assignment function, called $\lb \cdot \rb_\pa$. 
Then, in \S 4, we study the internal logics of the algebra-valued models constructed using the new and the standard assignment function. In \S 5, we show how to build  quotient models out of the algebra-valued models constructed using the new assignment function. In \S 6, we compare the results of this paper with other models of paraconsistent set theory. We end the paper with \S 7, where we present some open questions which might suggest new and exciting research paths.

\section{Algebra-valued models}

\subsection{Algebraic structures}

We introduce a few algebraic preliminaries to fix the notations. 


\begin{defi}
An algebraic structure is a triplet $\langle \mathbf{A}, \mathbf{O}, \mathbf{C} \rangle$, where $\mathbf{A}$ is a non-empty set, $\mathbf{O}$ is a collection of operations on $\mathbf{A}$, and $\mathbf{C}$ is a collection (possibly empty) of some elements of $\mathbf{A}$, called constants.
\end{defi}

\begin{defi}
An algebraic structure $\langle \mathbf{A}, \mathbf{O}, \mathbf{C} \rangle$ is called a lattice if 
$\mathbf{O}$ contains two binary operators $\wedge$ and $\vee$ satisfying the following laws for any $a, b, c \in \mathbf{A}$:
\begin{enumerate}
    \item[(i)] commutative: $a \wedge b = b \wedge a$, ~ $a \vee b = b \vee a$,
    \item[(ii)] associative: $a \wedge (b \wedge c) = (a \wedge b) \wedge c$, ~ $a \vee (b \vee c) = (a \vee b) \vee c$,
    \item[(iii)] idempotent: $a \wedge a = a$, ~ $a \vee a = a$, and
    \item[(iv)] absorption: $a \wedge (a \vee b) = a$, ~ $a \vee (a \wedge b) = a$;
\end{enumerate}

\smallskip

\noindent $\langle \mathbf{A}, \mathbf{O}, \mathbf{C} \rangle$ is called a distributive lattice if in addition both the following properties hold:\\
\\
\hspace*{0.4 cm} (v) $a \wedge (b \vee c) = (a \wedge b) \vee (a \wedge c)$, ~ $a \vee (b \wedge c) = (a \vee b) \wedge (a \vee c)$.

\end{defi}

\noindent A \emph{partial order relation} $\leq$ is defined in a lattice as follows:
\[a \leq b \mbox{ if and only if } a \wedge b = a \mbox{ (or equivalently } a \vee b = b), \mbox{ for any } a, b \in \mathbf{A}.\]
An element $a \in \mathbf{A}$ is called an upper bound of a subset $S$ of $\mathbf{A}$, if $x \leq a$, for all $x \in S$. Similarly, an element $b \in \mathbf{A}$ is called a lower bound of a subset $S$ of $\mathbf{A}$, if $x \geq b$, for all $x \in S$.
\begin{defi}
A lattice $\langle \mathbf{A}, \mathbf{O}, \mathbf{C} \rangle$ is said to be bounded if $\mathbf{C}$ contains two elements, that we denote by,  $\one$ and $\zero$ such that $\one$ is the upper bound of $\mathbf{A}$ and $\zero$ is the lower bound of $\mathbf{A}$. The elements $\one$ and $\zero$ are called the top and bottom elements of the lattice, respectively.
\end{defi}

\begin{defi}
A bounded lattice $\langle \mathbf{A}, \mathbf{O}, \mathbf{C} \rangle$ is said to be complete if for any $S \subseteq \mathbf{A}$,
\begin{enumerate}
    \item[(i)] there is an element $a \in \mathbf{A}$ such that $a$ is an upper bound of $S$ and for any upper bound $p$ of $S$, $a \leq p$, in this case $a$ is called the supremum of $S$, denoted by $\bigvee S$, and
    \item[(ii)] there is an element $b \in \mathbf{A}$ such that $b$ is a lower bound of $S$ and for any lower bound $q$ of $S$, $q \leq b$, in this case $b$ is called the infimum of $S$, denoted by $\bigwedge S$.
\end{enumerate}
\end{defi}

\noindent From now on, we shall denote the structure of a bounded lattice as $\langle \mathbf{A}, \wedge, \vee, \one, \zero \rangle$.

\subsection{Assignment functions and validity of a sentence}\label{subsection: assignment function}

In this section, we shall recall the general construction of algebra-valued models of set theory and define what an assignment function is. Let us begin with an algebraic structure $\langle \mathbf{A}, \mathbf{O}, \mathbf{C} \rangle$, which contains a complete distributive lattice $\langle \mathbf{A}, \wedge, \vee, \one, \zero \rangle$ as a substructure, and a language $\mathcal{L}_{\in}$ for set theory having the same structure of the algebra, together with the two binary predicate symbols $\in$ and $=$ and the two quantifiers $\exists$ and $\forall$.

Let us consider $\mathbf{V}$ as the class of all sets. Then, by transfinite recursion, we construct the following:
\begin{align*}
\mathbf{V}_{\alpha}^{(\A)} &= \{x\,:\mbox{ $x$ is a function and }\mathrm{ran}(x) \subseteq \mathbf{A}\\
&\hspace{15mm}\mbox{ and there is $\xi < \alpha$ with }\mathrm{dom}(x)
\subseteq \mathbf{V}_{\xi}^{(\A)}\}\mbox{ and}\\
\mathbf{V}^{(\A)} &= \{x\,:\,\exists \alpha (x \in \mathbf{V}_{\alpha}^{(\A)})\}.
\end{align*}

Let $\mathcal{L}_\A$ be the extended language of $\mathcal{L}_{\in}$, constructed by adding constant symbols corresponding to every element in $\VA{\A}$. To increase the readability, the name corresponding to each $u \in \VA{\A}$ will be denoted by the symbol $u$ itself, in the extended language $\mathcal{L}_\A$.

\begin{defi}
A function $\lb \cdot \rb^\A$ from the collection of all sentences of $\mathcal{L}_\A$ into $\mathbf{A}$ is said to be an assignment function.
\end{defi}

Given an assignment function, the validity of the sentences in $\mathcal{L}_\A$ depends on a subset of  $\mathbf{A}$ called the \emph{designated set}. Its elements represent the possible truth-values of valid sentences.

\begin{defi}
Let $\A = \langle \mathbf{A}, \wedge, \vee, \one, \zero \rangle$ be a bounded lattice. A set $D \subseteq \mathbf{A}$ is said to be a designated set if
\begin{enumerate}
\item[(i)] $\one \in D$, but $\zero \notin D$,
\item[(ii)] if $x \in D$ and $x \leq y$, then $y \in D$, and
\item[(iii)] for any $x, y \in D$, $x \wedge y \in D$.
\end{enumerate}
\end{defi}
\noindent Thus, a designated set is nothing but a \emph{filter} in $\A$.

\begin{defi}
A sentence $\varphi$ of $\mathcal{L}_\A$ is said to be valid in $\VA{\A}$ with respect to a given assignment function $\lb \cdot \rb^\A$ and a designated set $D$,  whenever $\lb \varphi \rb^\A \in D$. We denote this fact by $\VA{\A, ~ \lb \cdot \rb^\A} \models_D \varphi$ (or simply $\VA{\A} \models \varphi$, when the algebra $\A$, the assignment function $\lb \cdot \rb^\A$, and the designated set $D$ are clear from the context).
\end{defi}
\noindent From now on, we use the notation $\lb \cdot\rb$ to refer to the assignment function $\lb \cdot\rb^\A$, when the algebra $\A$ is clear from the context.

Notice that, the validity of a set theoretic sentence not only depends on the designated set, but also on the assignment function. 
Indeed, different assignment functions may have different interpretations for $\in$ and $=$. Then, these differences can be extended recursively to all formulas of set theory.

\subsection{Assignment functions with two different interpretations for $\in$ and $=$}

Let $\A = \langle \mathbf{A}, \wedge, \vee, \Rightarrow, ^*, \one, \zero \rangle$ be an algebra, where $\langle \mathbf{A}, \wedge, \vee, \one, \zero \rangle$ is a complete bounded distributive lattice. Let $\mathcal{L}_{\in}$ be the standard language of set theory, having the same structure as $\A$. We shall denote by $\wedge, \vee, \to, \neg, \top$, and $\bot$ the basic logical connectives. 
We then recursively define two different assignment functions: $\lb \cdot \rb_\ba$  and $\lb \cdot \rb_\pa$ which interpret $\in$ and $=$  as follows: 
\begin{enumerate}
    \item[(i)] \begin{align*}
\lb u \in v \rb_\ba&= \bigvee_{x \in \mathrm{dom}(v)} \big{(}v(x) \land \lb x = u \rb_\ba \big{)},\\
\lb u = v \rb_\ba &= \bigwedge_{x \in \mathrm{dom}(u)} \big{(}u(x) \Rightarrow \lb x \in v \rb_\ba\big{)} \land \bigwedge_{y \in \mathrm{dom}(v)} \big{(}v(y) \Rightarrow \lb y \in u \rb_\ba \big{)};
\end{align*}
    \item[(ii)] \begin{align*}
\lb u \in v \rb_\pa&= \bigvee_{x \in \mathrm{dom}(v)} \big{(}v(x) \land \lb x = u \rb_\pa \big{)},\\
\lb u = v \rb_\pa &= \bigwedge_{x \in \dom(u)} \big{(}(u(x) \Rightarrow  \lb x \in v \rb_\pa) \wedge (\lb  x \in v\rb_\pa^* \Rightarrow  u(x)^*)\big{)}\\
& \qquad \qquad \wedge \bigwedge_{y \in \dom(v)} \big{(}(v(y) \Rightarrow  \lb y \in u \rb_\pa) \wedge ( \lb y \in  u \rb_\pa^* \Rightarrow  v(y)^*  )\big{)};
\end{align*}  
\end{enumerate}

Then, these maps homomorphically extend to other formulas in the obvious way:  
for any two closed well-formed formulas $\varphi$ and $\psi$ in $\mathcal{L}_\A$,
{\allowdisplaybreaks
\begin{align*}
\lb \top \rb_X & = \one,\\
\lb \bot \rb_X & = \zero,\\
\lb \varphi \land \psi \rb_X & = \lb \varphi \rb_X \land \lb \psi \rb_X,\\
\lb \varphi \lor \psi \rb_X & = \lb \varphi \rb_X \lor \lb \psi \rb_X,\\
\lb \varphi \rightarrow \psi \rb_X & = \lb \varphi \rb_X \Rightarrow \lb \psi \rb_X,\\
\lb \neg \varphi \rb_X & = \lb \varphi \rb_X^*,\\
\lb \forall x \varphi(x) \rb_X &= \bigwedge_{u \in \mathbf{V}^{(\A)}} \lb \varphi(u) \rb_X \mbox{, and}\\
\lb \exists x \varphi(x) \rb_X &= \bigvee_{u \in \mathbf{V}^{(\A)}} \lb \varphi(u) \rb_X,
\end{align*}}where throughout this extension $\lb \cdot \rb_X$ is a given fixed assignment function, among $\lb \cdot \rb_\ba$ and $\lb \cdot \rb_\pa$. The suffixes `$\ba$', `$\pa$' and `$\ha$'  of the notations $\lb \cdot \rb_\ba$, $\lb \cdot \rb_\pa$ and $\lb \cdot \rb_\ha$ stand for `Boolean assignment function', `paraconsistent assignment function', and  `Heyting assignment function', respectively. The justifications of these notations are portrayed in the following sections.

\section{The assignment function $\lb \cdot \rb_\ba$ and $\ZF$}\label{subsection: Boolean-valued models}

The Zermelo-Fraenkel $(\ZF)$ axiom system, in the language $\mathcal{L}_\in$ is displayed below in Figure \ref{figure: Aximos of ZF}. In the schemes, $\varphi$ is a formula with $n+2$ free variables. This formulation follows closely \cite{Bell}. This definition of $\ZF$ is classically equivalent to the standard one that we find in logic textbooks and it is chosen in order to simplify the task of checking the validity of the axioms in algebra-valued models.

\begin{figure}[h!]
\begin{align*}
\tag{\textsf{Extensionality}}
&\forall x \forall y \big{(}\forall z (z \in x \leftrightarrow z \in y) \rightarrow x = y\big{)}\\
\tag{\textsf{Pairing}}
&\forall x \forall y \exists z \forall w \big{(}w \in z \leftrightarrow (w = x \lor w = y)\big{)}\\
\tag{\textsf{Infinity}}
&\exists x \big{(}\exists y (\forall z ~ \lnot(z\in y) \land y\in x )\land \forall w (w \in x \to \exists u (u \in x \wedge w \in u))\big{)}\\
\tag{\textsf{Union}}
& \forall x \exists y \forall z \big{(}z \in y \leftrightarrow \exists w (w \in x \wedge z \in w)\big{)}\\
\tag{\textsf{Power Set}}
&\forall x \exists y \forall z \big{(}z \in y \leftrightarrow \forall w (w \in z  \to w \in x)\big{)}\\
\tag{\textsf{Separation}$_\varphi$}
& \forall x \exists y \forall z \big{(}z \in y \leftrightarrow (z \in x \land \varphi(z))\big{)}\\
\tag{\textsf{Collection}$_\varphi$}
&\forall x \big{(}\forall y(y \in x \to \exists z \varphi (y, z))\\
&\hspace{2.5cm} \rightarrow \exists w \forall v(v \in x \to \exists u(u \in w \wedge \varphi (v, u)))\big{)}\\
\tag{\textsf{Foundation}$_\varphi$}
&\forall x \big{(}(\forall y(y \in x \to \varphi(y)) \rightarrow \varphi(x)) \rightarrow \forall z \varphi(z)\big{)}
\end{align*}
\caption{The axioms of $\ZF$.}\label{figure: Aximos of ZF}
\end{figure}

\subsection{Boolean and Heyting-valued models}

We start by recalling the standard results on the validity of set theory in algebra-valued models constructed using Boolean or Heyting algebras. 

\begin{teo}[cf.\ \cite{Bell}, Theorem 1.33]
Let $\A$ be any complete Boolean algebra and $\{\one\}$ be the designated set, where $\one$ is the top element of $\A$, then  $\VA{\A, ~ \lb \cdot \rb_\ba} \models \ZF$.\footnote{In this paper, we will not deal with the Axiom of Choice, leaving this issue for a sequel of this work.}
\end{teo}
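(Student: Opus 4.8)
The plan is to follow Bell's standard argument for Boolean-valued models, checking each axiom in Figure~\ref{figure: Aximos of ZF} in turn, while observing that over a complete Boolean algebra $\A$ the two interpretation clauses $\lb\cdot\rb_\ba$ and $\lb\cdot\rb_\pa$ coincide. Indeed, in a Boolean algebra $a^* = a \Rightarrow \zero$ and $b \Rightarrow c = b^* \vee c$, so the extra conjunct $(\lb x\in v\rb_\pa^{*}\Rightarrow u(x)^{*})$ in clause (ii) equals $(u(x)\Rightarrow \lb x\in v\rb_\pa)$ by contraposition; hence by a simultaneous induction on rank the functions agree, and it suffices to work with the familiar $\lb\cdot\rb_\ba$.

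First I would record the basic lemmas that make the induction go through: the ``$\in$-induction'' principle on $\VA{\A}$, the fact that $\lb u = u\rb_\ba = \one$, the symmetry $\lb u=v\rb_\ba = \lb v=u\rb_\ba$, transitivity of $=$, and the substitution property $\lb u = v\rb_\ba \wedge \lb\varphi(u)\rb_\ba \leq \lb\varphi(v)\rb_\ba$ (Leibniz's law), all proved by the usual transfinite recursion using that $\A$ is a complete Boolean algebra. One also needs the ``mixing'' or ``maximum principle'': for a family of conditions there is a single name realizing a supremum, together with the boundedness lemma that $\lb\exists x\in u\,\varphi(x)\rb_\ba = \bigvee_{x\in\dom(u)}(u(x)\wedge\lb\varphi(x)\rb_\ba)$, which reduces quantifiers ranging over all of $\VA{\A}$ to quantifiers bounded by a name. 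These are exactly the tools in \cite{Bell}, Ch.~1.

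Then I would verify the axioms one by one, in each case producing an explicit witnessing name. Extensionality reduces to the definition of $\lb u = v\rb_\ba$ together with the bounded-quantifier lemma; Pairing uses $z = \{x\mapsto\one, y\mapsto\one\}$; Union and Power Set use the names with domains $\bigcup\{\dom(w): w\in\dom(x)\}$ and $\A^{\dom(x)}$ respectively (the power-set name assigning to each $g$ the value $\lb g \subseteq x\rb_\ba$); Infinity uses the canonical name $\check\omega$ (or a suitable fixed-point name); Separation$_\varphi$ uses the name $y$ with $\dom(y)=\dom(x)$ and $y(t)=x(t)\wedge\lb\varphi(t)\rb_\ba$; Collection$_\varphi$ uses a rank argument plus the maximum principle to bound the witnesses $z$ inside a single $\VA{\A}_\alpha$; and Foundation$_\varphi$ follows from $\in$-induction on $\VA{\A}$. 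Each check is the computation that the relevant Boolean value is $\one$, hence lies in the designated set $\{\one\}$.

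The main obstacle is really bookkeeping rather than conceptual: assembling the induction hypotheses in the right order so that Leibniz's law and the bounded-quantifier lemma are available when needed, and, for Collection, making the rank/reflection estimate precise enough to locate all witnesses below one ordinal. Since this theorem is explicitly cited as \cite{Bell}, Theorem~1.33, the honest write-up is to cite Bell for the bulk of the verification and only to add the one-line remark that $\lb\cdot\rb_\ba$ is, over a Boolean algebra, literally the assignment function used there (and, as noted, agrees with $\lb\cdot\rb_\pa$), so no new work is required.
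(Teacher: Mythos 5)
Your proposal is correct and matches the paper exactly: the paper offers no proof of this statement, simply citing Bell's Theorem~1.33, and your observation that $\lb\cdot\rb_\ba$ is literally Bell's assignment function (so the citation suffices) is the whole content. Your sketch of the standard verification, and the side remark that $\lb\cdot\rb_\pa$ agrees with $\lb\cdot\rb_\ba$ over a Boolean algebra by contraposition, are both accurate (the latter is proved later in the paper as Lemma~\ref{lemma: value of u = v are same under PA and BA} and Theorem~\ref{theorem: importance of PA-assignment}, and is not needed for this particular statement).
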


Following a similar construction, Grayson proved in 1977 that the assignment function $\lb \cdot \rb_\ba$ also works to validate $\IZF$, intuitionistic Zermelo Fraenkel set theory, in Heyting-valued models.


\begin{teo}[\cite{grayson}, p.~410]
Let $\A$ be any complete Heyting algebra and $\{\one\}$ be the designated set, where $\one$ is the top element of $\A$, then  $\VA{\A, ~ \lb \cdot \rb_\ba} \models \IZF$.
\end{teo}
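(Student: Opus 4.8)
The plan is to show that every axiom in the $\IZF$ list receives value $\one$ in $\VA{\A,\lb\cdot\rb_\ba}$, working throughout in the classical background theory $\ZF$ and keeping careful track of the fact that a complete Heyting algebra need not satisfy $a\vee a^{*}=\one$ or $a^{**}=a$. Beyond Bell's Boolean treatment, the substance of the theorem is precisely that none of the required computations secretly appeals to such a law.

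First I would establish the standard \emph{equality lemmas} for $\lb\cdot\rb_\ba$: for all $u,v,w\in\VA{\A}$ we have $\lb u=u\rb_\ba=\one$, $\lb u=v\rb_\ba=\lb v=u\rb_\ba$, $\lb u=v\rb_\ba\wedge\lb v=w\rb_\ba\le\lb u=w\rb_\ba$, the membership compatibilities $u(x)\le\lb x\in u\rb_\ba$ and $\lb x\in u\rb_\ba\wedge\lb x=y\rb_\ba\le\lb y\in u\rb_\ba$, and Leibniz's Law $\lb u=v\rb_\ba\wedge\lb\varphi(u)\rb_\ba\le\lb\varphi(v)\rb_\ba$ — all proved by a simultaneous induction on the ranks of the names, with an inner induction on the structure of $\varphi$ for Leibniz's Law. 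I would also record the bounded-quantifier lemma $\lb\forall y(y\in u\to\varphi(y))\rb_\ba=\bigwedge_{x\in\dom(u)}\bigl(u(x)\Rightarrow\lb\varphi(x)\rb_\ba\bigr)$ and its dual for $\exists$. Each of these uses only the frame identities and the residuation law $a\wedge b\le c\iff a\le b\Rightarrow c$, so it transfers verbatim from the Boolean to the Heyting setting.

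Then come the ``constructive'' axioms, each witnessed by an explicit name exactly as in the Boolean case. For Pairing of $u,v$ take $w$ with $\dom(w)=\{u,v\}$ and constant value $\one$; for Union of $u$ take $\dom(w)=\bigcup_{x\in\dom(u)}\dom(x)$ with $w(t)=\bigvee_{x\in\dom(u)}\bigl(u(x)\wedge x(t)\bigr)$; for Power Set of $u$ take $\dom(w)={}^{\dom(u)}\mathbf{A}$, assigning to each $f$ the value $\lb f\subseteq u\rb_\ba$; for Separation with parameter $u$ and formula $\varphi$ take $\dom(w)=\dom(u)$ with $w(x)=u(x)\wedge\lb\varphi(x)\rb_\ba$; for Infinity use the canonical names $\widehat n$ of the finite von Neumann ordinals and their union $\widehat\omega$; and Extensionality reduces, via the equality lemmas, to the inequality $\lb\forall z(z\in u\leftrightarrow z\in v)\rb_\ba\le\lb u=v\rb_\ba$, which unwinds directly from the definition of $\lb u=v\rb_\ba$. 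In every case the defining biconditional is checked to have value $\one$ using only lattice and residuation manipulations, never $a\vee a^{*}=\one$.

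The last two axioms are $\in$-induction (Foundation) and Collection. For Foundation, assuming $\lb\forall x((\forall y\in x\,\varphi(y))\to\varphi(x))\rb_\ba=\one$, one shows $\lb\varphi(u)\rb_\ba=\one$ for all $u$ by transfinite induction on the $\VA{\A}$-rank of $u$: by the bounded-quantifier lemma together with the inductive hypothesis applied to each $x\in\dom(u)$ one gets $\lb\forall y\in u\,\varphi(y)\rb_\ba=\one$, and the hypothesis instantiated at $u$ then yields $\lb\varphi(u)\rb_\ba=\one$. Collection is where the real work lies and I expect it to be the main obstacle: from $\lb\forall y(y\in u\to\exists z\,\varphi(y,z))\rb_\ba=\one$ one has, for each $x\in\dom(u)$, $u(x)\le\bigvee_{z\in\VA{\A}}\lb\varphi(x,z)\rb_\ba$, a supremum over a proper class; invoking the Collection scheme of the background universe $\V$ one extracts for each $x$ a \emph{set} $W_x$ of witnessing names with $\bigvee_{z\in W_x}\lb\varphi(x,z)\rb_\ba=\bigvee_{z\in\VA{\A}}\lb\varphi(x,z)\rb_\ba$, and, setting $\dom(w)=\bigcup_{x\in\dom(u)}W_x$ with constant value $\one$, one verifies $\lb\forall y(y\in u\to\exists z(z\in w\wedge\varphi(y,z)))\rb_\ba=\one$ by one more residuation computation. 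The care for this axiom is concentrated in the bookkeeping of the reflection step and in confirming, once more, that every reused Boolean computation survives in the absence of complementation laws.
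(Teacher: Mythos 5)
The paper does not prove this theorem at all: it is quoted from Grayson with a page reference, so there is no ``paper's own proof'' to compare against. Your sketch follows the standard route (equality lemmas by induction on names, the bounded-quantifier identity, explicit witnessing names for each axiom, reflection in $\V$ for Collection), which is also the skeleton the paper itself replays later for its new assignment function in Theorem \ref{PAZF}; your emphasis that no step uses $a\vee a^{*}=\one$ or $a^{**}=a$ is exactly the right point.

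There is, however, one genuine gap, and it affects precisely the two axioms you identify as the hard ones. With designated set $\{\one\}$, the value of an implication $\alpha\to\beta$ is $\lb\alpha\rb\Rightarrow\lb\beta\rb$, which equals $\one$ if and only if $\lb\alpha\rb\le\lb\beta\rb$. Your arguments for Foundation and Collection instead establish only the rule form: ``\emph{if} the antecedent has value $\one$, \emph{then} the consequent has value $\one$.'' That is strictly weaker and does not show the axiom is valid when the antecedent takes some intermediate value $a$ with $\zero<a<\one$ --- a case that genuinely occurs in a non-trivial Heyting algebra. (It happens to suffice in the paper's $\co$-algebras only because there $a\Rightarrow b\in\{\zero,\one\}$ degenerately; in a Heyting algebra it does not.) The repair is standard but must be made explicit: for Foundation set $a=\lb\forall x((\forall y\in x\,\varphi(y))\to\varphi(x))\rb_\ba$ and prove $a\le\lb\varphi(u)\rb_\ba$ by induction on the rank of $u$, using $a\le\bigwedge_{x\in\dom(u)}\lb\varphi(x)\rb_\ba\le\lb\forall y\in u\,\varphi(y)\rb_\ba$ together with $a\le\lb\forall y\in u\,\varphi(y)\rb_\ba\Rightarrow\lb\varphi(u)\rb_\ba$ and modus ponens $b\wedge(b\Rightarrow c)\le c$; for Collection start from $a\wedge u(x)\le\bigvee_{z}\lb\varphi(x,z)\rb_\ba$, reflect to a set $W_x$, and conclude $a\le u(x)\Rightarrow\lb\exists z(z\in w\wedge\varphi(x,z))\rb_\ba$ by residuation before taking the meet over $x\in\dom(u)$. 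With that bookkeeping carried through, the rest of your outline is correct.
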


\subsection{Generalized algebra-valued models}\label{subsection: generalized algebra-valued models}

The study of algebra-valued models was generalized to a bigger class of algebras in \cite{Loewe}. 
In that paper, the authors defined the negation-free fragment ($\nff$) to be the closure of the atomic formulas by the connectives $\wedge$, $\vee$, $\to$, $\bot$, and the quantifiers $\forall$, $\exists$. 
The negation-free fragment of $\ZF$ is denoted by $\nff$-$\ZF$, and similarly, $\nff$-$\IZF$ stands for the negation-free fragment of $\IZF$.
Note that, the two formulas $\lnot \varphi$ and $\varphi \to \bot$ are classically and  intuitionistically equivalent, where $\varphi \to \bot$ is a negation-free formula but $\lnot \varphi$ is not. Hence, for any sentence $\psi$ of $\mathcal{L}_\in$, $\ZF \models \psi$ iff $\nff$-$\ZF \models \psi'$ and $\IZF \models \psi$ iff $\nff$-$\IZF \models \psi'$, where $\psi'$ is the formula replacing every subformula of the form $\neg \varphi$ of $\psi$ by $\varphi \to \bot$.
The fragment of $\ZF$ and $\nff$-$\ZF$ without the $\mathsf{Foundation~Axiom~Schema}$ will be denoted by $\ZF^-$ and $\nff$-$\ZF^-$, respectively. 

The (meta-)induction principle for an algebra-valued model $\VA{\A}$ states that, for any property $\Phi$ of names, if for any $x \in \VA{\A}$, we have the property: 
\begin{center}
    if for all $y \in \dom(x)$, $\Phi(y)$ hold, then $\Phi(x)$ also holds,
\end{center}
then for all $x \in \VA{\A}$, $\Phi(x)$ holds. This induction principle is used through out this paper to develop the algebra-valued models.

\begin{defi}[\cite{Loewe}, p.~194]
An algebra
$\A = \langle \mathbf{A}, \wedge, \vee, \Rightarrow, \mathbf{1}, \mathbf{0} \rangle$, where the substructure $\langle \mathbf{A}, \wedge, \vee, \mathbf{1}, \mathbf{0} \rangle$  is a complete distributive lattice, is said to be a \emph{deductive reasonable implication algebra}
if for any $x, y, z \in \mathbf{A}$, the following properties hold:
\begin{description}
\item[P1] $(x\wedge y)\leq z$ implies $x\leq (y\Rightarrow z)$,

\item[P2] $y\leq z$ implies $(x\Rightarrow y)\leq (x\Rightarrow z)$, 

\item[P3] $y\leq z$ implies $(z\Rightarrow x)\leq (y\Rightarrow x)$, and

\item[P4] $((x\wedge y)\Rightarrow z) = (x\Rightarrow (y\Rightarrow z))$.
\end{description}
\end{defi}

For any formula $\varphi(x)$ with one free variable $x$ in $\mathcal{L}_{\in}$ and any $u \in \VA{\A}$, consider the following equation:
\begin{align*}
\lb\forall x\big{(}x\in u \to \varphi(x)\big{)}\rb_\ba =
\bigwedge_{v\in\mathrm{dom}(u)} \big{(}u(v)\Rightarrow
\lb\varphi(v)\rb_\ba\big{)}. \tag{$\bq^\ba$}
\end{align*}
Then, the following theorem holds.

\begin{teo}[\cite{Loewe}, Theorem 3.3 and 3.4]\label{thm: RIA valued model of NFF-ZF}
If $\mathbf{V}$ is a model of $\ZF$ and $\A$ is a deductive reasonable implication algebra
so that $\bq^\ba$
holds in $\VA{\A}$ for every negation-free formula $\varphi$, then for any designated set $D$, $\VA{\A, ~ \lb \cdot \rb_\ba} \models_D \nff$-$\ZF^-$.
\end{teo}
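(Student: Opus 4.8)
The plan is to verify, axiom by axiom, that every member of $\nff$-$\ZF^-$ --- i.e.\ the axioms and schemes of Figure \ref{figure: Aximos of ZF} other than $\textsf{Foundation}$, with each $\neg\psi$ rewritten as $\psi\to\bot$ --- receives the value $\one$ in $\VA{\A,\,\lb\cdot\rb_\ba}$. Since $\one$ belongs to every designated set, this yields $\VA{\A,\,\lb\cdot\rb_\ba}\models_D\nff$-$\ZF^-$ uniformly in $D$, so the designated set plays no further role. Before addressing the axioms I would assemble a toolkit, each item proved by induction on names using only $\mathbf{P1}$--$\mathbf{P4}$ and the (infinite) distributivity of the lattice: (a) reflexivity $\lb u=u\rb_\ba=\one$, symmetry, and transitivity of $=$; (b) the membership estimate $u(v)\le\lb v\in u\rb_\ba$ for $v\in\dom(u)$; (c) the remark that $a\le b$ implies $(a\Rightarrow b)=\one$ (immediate from $\mathbf{P1}$ with $x=\one$); (d) a substitution principle for the negation-free fragment, $\lb u=v\rb_\ba\wedge\lb\varphi(u)\rb_\ba\le\lb\varphi(v)\rb_\ba$ for negation-free $\varphi$, by formula induction; and (e) the ``easy half'' of $\bq^\ba$, namely $\lb\forall x(x\in u\to\varphi(x))\rb_\ba\le\bigwedge_{v\in\dom(u)}\!\big(u(v)\Rightarrow\lb\varphi(v)\rb_\ba\big)$, which follows from (b) and $\mathbf{P3}$ (the hypothesis $\bq^\ba$ supplies the reverse inequality).

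With the toolkit in place, $\textsf{Extensionality}$, $\textsf{Pairing}$, $\textsf{Infinity}$, and $\textsf{Union}$ go through with the classical Boolean-valued witnesses of \cite{Bell}, re-checked so that only $\mathbf{P1}$--$\mathbf{P4}$ are used. For $\textsf{Extensionality}$, $\mathbf{P1}$ reduces matters to the inequality $\lb\forall z(z\in u\leftrightarrow z\in v)\rb_\ba\le\lb u=v\rb_\ba$, which is checked conjunct-by-conjunct against the definition of $\lb u=v\rb_\ba$ via (b) and $\mathbf{P3}$ (note we are bounding, not evaluating, a quantifier, so $\bq^\ba$ is not needed here). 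For $\textsf{Pairing}$ take $z$ with $\dom(z)=\{u,v\}$ and $z\equiv\one$, and compute $\lb w\in z\rb_\ba=\lb w=u\rb_\ba\vee\lb w=v\rb_\ba$ directly. For $\textsf{Infinity}$ take the canonical $\A$-name $\check\omega$ of the ground-model $\omega$ and verify --- working with all names rather than localizing a quantifier --- that $\lb v\in\check\omega\rb_\ba\le\lb\exists u(u\in\check\omega\wedge v\in u)\rb_\ba$ for every name $v$ and that $\lb v\in\check 0\rb_\ba=\zero$. For $\textsf{Union}$ take $y$ with $\dom(y)=\bigcup_{w\in\dom(u)}\dom(w)$ and $y(t)=\bigvee\{u(w)\wedge w(t):w\in\dom(u),\,t\in\dom(w)\}$; the existential on the right of its defining biconditional localizes to a supremum over $\dom(y)$ purely via (b) and (d), so no extra hypothesis is needed.

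The remaining schemes rely on the hypothesis $\bq^\ba$. For $\textsf{Separation}_\varphi$ take $y$ with $\dom(y)=\dom(u)$ and $y(t)=u(t)\wedge\lb\varphi(t)\rb_\ba$; item (d) for the negation-free $\varphi$, with distributivity, yields $\lb z\in y\rb_\ba=\lb z\in u\rb_\ba\wedge\lb\varphi(z)\rb_\ba$ for every name $z$, so the biconditional has value $\one$. For $\textsf{Power Set}$, writing $\lb z\subseteq u\rb_\ba$ for $\lb\forall w(w\in z\to w\in u)\rb_\ba$, take $y$ with $\dom(y)=\{f:\dom(f)=\dom(u),\,\ran(f)\subseteq\mathbf{A}\}$ (a set, by $\textsf{Power Set}$ in $\V$) and $y(f)=\lb f\subseteq u\rb_\ba$; the subformula $\forall w(w\in z\to w\in x)$ has precisely the shape for which $\bq^\ba$ applies, giving $\lb z\subseteq u\rb_\ba=\bigwedge_{w\in\dom(z)}\!\big(z(w)\Rightarrow\lb w\in u\rb_\ba\big)$, and one then shows that the trace $f_z\in\mathbf{A}^{\dom(u)}$ with $f_z(v)=u(v)\wedge\lb v\in z\rb_\ba$ satisfies $y(f_z)=\one$ and $\lb z\subseteq u\rb_\ba\le\lb f_z=z\rb_\ba$ --- the latter being where $\bq^\ba$ is essential --- whence the biconditional again has value $\one$. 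For $\textsf{Collection}_\varphi$, exploit $\V\models\ZF$: use $\bq^\ba$ to rewrite the antecedent $\forall y(y\in u\to\exists z\,\varphi(y,z))$ as $\bigwedge_{y\in\dom(u)}\!\big(u(y)\Rightarrow\bigvee_{z\in\VA{\A}}\lb\varphi(y,z)\rb_\ba\big)$; then, bounding the rank of a witness by a Scott's-trick argument and using $\ZF$ in $\V$, find a single ordinal $\alpha$ such that, uniformly over the set-many $y\in\dom(u)$, every relevant supremum is already attained inside $\mathbf{V}_{\alpha}^{(\A)}$; taking $w$ with $\dom(w)=\mathbf{V}_{\alpha}^{(\A)}$ and $w\equiv\one$, and applying $\bq^\ba$ once more (together with $\mathbf{P2}$ and (c)) to the bounded quantifier $\forall v(v\in u\to\cdots)$ in the conclusion, yields $\lb A(u)\rb_\ba\le\lb C(u)\rb_\ba$ for every $u$, as required.

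I expect two things to be the main obstacles. The first is that a deductive reasonable implication algebra need not validate modus ponens ($a\wedge(a\Rightarrow b)\le b$) or contraposition, so every verification must be re-engineered to use only $\mathbf{P1}$--$\mathbf{P4}$; in particular, establishing the substitution principle (d) --- routine in the Boolean case --- is itself delicate here, and it is precisely the failure of such Boolean manipulations that forces the equation $\bq^\ba$ to be taken as a hypothesis rather than derived, since in $\textsf{Power Set}$ and $\textsf{Collection}_\varphi$ the value of a bounded universal quantifier must be \emph{computed}, not merely bounded below by $\one$. The second is $\textsf{Collection}_\varphi$ itself: it is the one axiom where a genuine reflection argument inside $\V$ must be interleaved with the algebraic bookkeeping, the subtle point being that $\exists z\,\varphi(y,z)$ ranges over the proper class $\VA{\A}$, so one must first cut this down to a set (by a rank bound) --- uniformly over $y\in\dom(u)$ --- before Collection in $\V$ can be applied.
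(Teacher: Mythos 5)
Your overall architecture (reduce everything to value $\one$, use $\bq^\ba$ for the schemes, reflect in $\V$ for \textsf{Collection}) is the right one and matches the proof of Theorems 3.3--3.4 in \cite{Loewe}, which the paper only cites; items (a)--(c) and (e) of your toolkit are fine. The genuine gap is item (d): the order-theoretic substitution principle $\lb u=v\rb_\ba\wedge\lb\varphi(u)\rb_\ba\le\lb\varphi(v)\rb_\ba$ is \emph{false} for negation-free $\varphi$ in exactly the algebras this theorem covers, and it is load-bearing in your verifications of \textsf{Union} and \textsf{Separation}$_\varphi$. Counterexample in $\s$ (which is a deductive reasonable implication algebra satisfying $\bq^\ba$, per \cite{Loewe}): take $u=\{\langle\varnothing,1\rangle\}$ and $v=\{\langle\varnothing,\nicefrac{1}{2}\rangle\}$. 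Then $\lb u=v\rb_\ba=(1\Rightarrow\nicefrac{1}{2})\wedge(\nicefrac{1}{2}\Rightarrow 1)=1$ and $\lb \varnothing\in u\rb_\ba=1$, but $\lb \varnothing\in v\rb_\ba=\nicefrac{1}{2}$, so (d) already fails for the atomic formula $\varphi(x):=\varnothing\in x$. The failure is precisely the failure of modus ponens you flagged: substituting in the \emph{right} argument of $\in$ requires $u(x)\wedge(u(x)\Rightarrow\lb x\in v\rb_\ba)\le\lb x\in v\rb_\ba$, which $\mathbf{P1}$--$\mathbf{P4}$ do not yield. Consequently your Separation identity $\lb z\in y\rb_\ba=\lb z\in u\rb_\ba\wedge\lb\varphi(z)\rb_\ba$ is also false in general (e.g.\ $u=\{\langle a,1\rangle\}$ with $a=\{\langle\varnothing,1\rangle\}$, $\varphi(x):=\exists w(w\in x)$, $z=\{\langle\varnothing,\nicefrac{1}{2}\rangle\}$ gives $1$ on the left and $\nicefrac{1}{2}$ on the right), even though the axiom instance itself remains valid.

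The repair is to avoid computing any biconditional via substitution and instead bound each implication below by $\one$ separately, using only (b), (c), $\mathbf{P4}$ and $\bq^\ba$; this is how \cite{Loewe} proceeds, and the paper's own proof of Theorem \ref{PAZF} exhibits the same technique for the $\pa$-analogues. For \textsf{Separation}$_\varphi$: the direction $z\in y\to(z\in u\wedge\varphi(z))$ follows from $\bq^\ba$ together with $y(z)=u(z)\wedge\lb\varphi(z)\rb_\ba\le\lb z\in u\rb_\ba\wedge\lb\varphi(z)\rb_\ba$ and (c); for the converse, use $\mathbf{P4}$ to rewrite $(z\in u\wedge\varphi(z))\to z\in y$ as $z\in u\to(\varphi(z)\to z\in y)$, apply $\bq^\ba$ to the resulting bounded quantifier (its matrix is still negation-free), and finish with $\mathbf{P4}$, (b) and (c). \textsf{Union} is handled analogously with the witness $y(t):=\lb\exists w(w\in u\wedge t\in w)\rb_\ba$, which turns the hard direction into an instance of $a\Rightarrow a=\one$. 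Finally, be wary of listing transitivity of $=$ and the full atomic congruences in (a) as routine: without modus ponens only substitution in the \emph{left} argument of $\in$ (via transitivity of $=$) can be hoped for, and even that requires a careful induction; the correct proof is organized so that the right-argument substitution your argument leans on is never needed.
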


If $\A$ is any complete Boolean algebra or complete Heyting algebra, then $\A$ is not only proved to be a deductive reasonable implication algebra, but $\bq^\ba$ holds in $\VA{\A}$ for every negation-free formula $\varphi$. But are there non-Boolean and non-Heyting algebras which satisfy the conditions of Theorem \ref{thm: RIA valued model of NFF-ZF}? Interestingly, this question has a positive answer.

\subsection{$\mathcal{T}$-valued models}

In \cite{VenSan}, the authors defined a class of complete totally ordered algebras $\mathcal{T}$ (i.e. algebras whose underlying set is totally ordered), able to satisfy the hypotheses of Theorem \ref{thm: RIA valued model of NFF-ZF} in the case of the choice of an \emph{ultrafilter} for $\T \in \mathcal{T}$: $D_\T = \{a \in \T: \zero < a\}$, where $\Rightarrow$ and $^*$ are defined as follows: for any two elements $a$ and $b$ of $\T$,
\begin{equation*}
a \Rightarrow b = \left \{ \begin{array}{ll}
                        \zero, & \mbox{if } a \neq \zero \mbox{ and } b = \zero; \\
                        \one, & \mbox{otherwise,}
                      \end{array}\right.
\end{equation*}
\[\one^* = \zero,~ \zero^* = \one, \mbox{ and } a^* = a, \mbox{ if } a \neq \one, \zero. \]
Notice that, for every $\T \in \mathcal{T}$ the ultrafilter $D_\T$ is fixed.
Therefore, for any 
$\mathbb{T} \in \mathcal{T}$, we have that $\VA{\mathbb{T}, ~ \lb \cdot \rb_\ba} \models_{D_\mathbb{T}} \nff$-$\ZF^-$.\footnote{See \cite{VenSanJPL} for a discussion on the different negations, and their properties, that can be defined in $\mathcal{T}$-valued models.}


\begin{defi} Let  $\mathcal{W}$ be the subclass of $\mathcal{T}$, which consists of all well-ordered algebras: i.e., algebras whose underlying set is order-isomorphic to an ordinal. On the other hand, $\w \subseteq \mathcal{W}$ is the collection of all algebras which are not order-isomorphic to the successor of any limit ordinal. An element of $\mathcal{W}$ will be called a $\mathcal{W}$-algebra, whereas an element of $\w$ will be called a $\w$-algebra.
\end{defi}

It is also proved in \cite{VenSan} that, for every negation-free formula $\varphi$ and $\T \in \mathcal{W}$, $\VA{\mathbb{T}, ~ \lb \cdot\rb_\ba} \models_{D_\mathbb{T}} \mathsf{Foundation}_\varphi$. Hence, the following theorem holds.

\begin{teo}[\cite{VenSan}, Corollary 4.10]\label{theorem: T valued models under BA} For every $\mathbb{T} \in \mathcal{W}$, $\VA{\mathbb{T}, ~ \lb \cdot \rb_\ba} \models_{D_\T} \nff$-$\ZF$.
\end{teo}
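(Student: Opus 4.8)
The plan is to combine the two previously-cited facts about $\mathcal{T}$-valued models into a single statement. By the discussion preceding Theorem \ref{theorem: T valued models under BA}, every $\T \in \mathcal{T}$ is a deductive reasonable implication algebra for which $\bq^\ba$ holds for all negation-free $\varphi$; hence Theorem \ref{thm: RIA valued model of NFF-ZF} applies and gives $\VA{\T,\ \lb\cdot\rb_\ba} \models_{D_\T} \nff$-$\ZF^-$ whenever $\V$ is a model of $\ZF$. Since $\mathcal{W} \subseteq \mathcal{T}$, this already yields all the axioms of $\nff$-$\ZF$ except the $\mathsf{Foundation}_\varphi$ schema for every $\T \in \mathcal{W}$. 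It therefore remains only to verify the Foundation schema, and this is exactly the content of the result quoted just above the statement (from \cite{VenSan}): for every negation-free $\varphi$ and every $\T \in \mathcal{W}$, $\VA{\T,\ \lb\cdot\rb_\ba} \models_{D_\T} \mathsf{Foundation}_\varphi$. Putting the two together gives the theorem.

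First I would make explicit that the hypotheses of Theorem \ref{thm: RIA valued model of NFF-ZF} are met. That is, I would recall (or cite from \cite{VenSan}) that the operations $\Rightarrow$ and $^*$ as defined above make each $\T \in \mathcal{T}$ satisfy P1--P4, and that the boundedness condition $\bq^\ba$ holds in $\VA{\T}$ for every negation-free $\varphi$ — the key point being that in a totally ordered complete algebra the relevant infima over $\dom(u)$ behave well. Granting this, Theorem \ref{thm: RIA valued model of NFF-ZF} immediately gives $\VA{\T,\ \lb\cdot\rb_\ba} \models_{D_\T} \nff$-$\ZF^-$ for every $\T \in \mathcal{T}$, in particular for every $\T \in \mathcal{W}$, since $D_\T$ is a designated set (indeed an ultrafilter).

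Next I would add the Foundation schema. Here one uses the stronger order-theoretic structure of $\mathcal{W}$: because the underlying order is well-ordered, one can run the meta-induction principle on names together with a genuine minimal-counterexample argument to establish $\lb \mathsf{Foundation}_\varphi \rb_\ba \in D_\T$ for each negation-free $\varphi$. Since this verification is precisely \cite[Corollary~4.10]{VenSan} and its supporting lemmas, I would simply invoke it. Combining $\nff$-$\ZF^-$ with $\mathsf{Foundation}_\varphi$ for all negation-free $\varphi$ yields $\VA{\T,\ \lb\cdot\rb_\ba} \models_{D_\T} \nff$-$\ZF$, as claimed.

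The main obstacle, were one to carry out the argument from scratch rather than quoting \cite{VenSan}, is the verification of the Foundation schema: the naive induction on the rank of names does not directly interact with the well-order on $\T$, and one must instead argue that if $\lb \forall z\, \varphi(z) \rb_\ba \notin D_\T$ then there is a name witnessing a minimal failure, and use well-orderedness of $\T$ (not merely total order) to derive a contradiction with the premise $\forall x((\forall y(y\in x \to \varphi(y)) \to \varphi(x)))$. The restriction to $\w$-algebras — excluding successors of limit ordinals — in the definition of $\w$ is exactly what is needed to make the relevant suprema and infima align; but since Theorem \ref{theorem: T valued models under BA} only asks for $\T \in \mathcal{W}$, and the Foundation part for all of $\mathcal{W}$ is already established in \cite{VenSan}, no new work is required here beyond assembling the cited ingredients.
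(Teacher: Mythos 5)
Your proposal is correct and follows exactly the route the paper itself takes: the theorem is obtained by combining the fact that every $\mathcal{T}$-algebra satisfies the hypotheses of Theorem \ref{thm: RIA valued model of NFF-ZF} (giving $\nff$-$\ZF^-$) with the separately cited validity of $\mathsf{Foundation}_\varphi$ for negation-free $\varphi$ in every $\mathcal{W}$-algebra-valued model from \cite{VenSan}. The paper offers no further argument beyond this assembly of cited ingredients, so nothing is missing from your write-up.
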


 In addition, we have the following theorem.

\begin{teo}[\cite{Taraf}, Theorem 22]
For any $\mathbb{T} \in \mathcal{W}$ and any designated set $D$ of $\T$, $\VA{\mathbb{T}, ~ \lb \cdot \rb_\ba} \models_{D} \mathsf{Axiom~of~Choice}$, if $\mathbf{V} \models \mathsf{Axiom~of~Choice}$.
\end{teo}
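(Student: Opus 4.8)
The plan is to verify directly a negation-free version of the Axiom of Choice, say $\forall x\,\exists f\,\bigl(\mathrm{Fun}(f)\wedge\dom(f)=x\wedge\forall y\,\forall v\,(y\in x\wedge\langle y,v\rangle\in f\wedge\exists z\,(z\in y)\to v\in y)\bigr)$, where $\mathrm{Fun}(f)$ is the negation-free formula ``$f$ is a set of pairs and is single-valued'', $\langle\cdot,\cdot\rangle$ denotes Kuratowski pairs, and every $\neg\psi$ has been rewritten as $\psi\to\bot$ and every $y\neq\emptyset$ as $\exists z\,(z\in y)$; this sits in the negation-free fragment, and since $\VA{\mathbb{T},\lb\cdot\rb_\ba}$ models $\nff$-$\ZF$ (Theorem~\ref{theorem: T valued models under BA}) it is a legitimate rendering of $\AC$. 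As $\lb\forall x\,\varphi(x)\rb_\ba=\bigwedge_{u\in\VA{\mathbb{T}}}\lb\varphi(u)\rb_\ba$, it suffices to produce, for each name $u$, a name $f_u$ with $\lb\,\mathrm{Fun}(f_u)\wedge\dom(f_u)=u\wedge\forall y\,\forall v\,(\cdots)\,\rb_\ba=\one$: then $\lb\exists f(\cdots)\rb_\ba=\one$ for every $u$, hence $\lb\AC\rb_\ba=\one$, which lies in every designated set $D$.

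First I would record two facts about $\VA{\mathbb{T},\lb\cdot\rb_\ba}$. (i) For $\mathbb{T}\in\mathcal{W}$ the implication $\Rightarrow$ is $\{\zero,\one\}$-valued (by definition $a\Rightarrow b=\zero$ exactly when $a\neq\zero$ and $b=\zero$); hence every value $\lb u=v\rb_\ba$ — a meet of terms $u(x)\Rightarrow\lb x\in v\rb_\ba$ — lies in $\{\zero,\one\}$, the relation $u\equiv v:\Leftrightarrow\lb u=v\rb_\ba=\one$ is an equivalence relation on $\VA{\mathbb{T}}$, and canonical names $\check a$ behave classically: $\lb\check a\in\check b\rb_\ba$ and $\lb\check a=\check b\rb_\ba$ are $\zero$ or $\one$ with their evident values, $\Delta_0$ formulas are absolute for canonical names, and $\langle\check a,\check b\rangle\equiv\check{\langle a,b\rangle}$. (ii) (\emph{Collapsing lemma.}) For every $u$ there is $\pi(u)\in\V$ with $u\equiv\check{\pi(u)}$: put $\pi(u):=\{\pi(v):v\in\dom(u),\ u(v)\neq\zero\}$ by $\in$-recursion and unfold $\lb u=\check{\pi(u)}\rb_\ba$; each conjunct is $u(v)\Rightarrow\lb v\in\check{\pi(u)}\rb_\ba$ or $\one\Rightarrow\lb\check a\in u\rb_\ba$, and the coarseness of $\Rightarrow$ reduces it to the induction hypothesis $v\equiv\check{\pi(v)}$. (The classicality of canonical names, and a statement close to the collapsing lemma, are part of the apparatus already used for Theorems~\ref{thm: RIA valued model of NFF-ZF} and~\ref{theorem: T valued models under BA}.)

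Given $u$, set $X:=\pi(u)$, so $u\equiv\check X$, and — using $\AC$ in $\V$ — choose $F\in\V$ with $\dom(F)=X$, $F(a)\in a$ whenever $\emptyset\neq a\in X$, and $F(a)$ arbitrary if $a=\emptyset$; let $f_u:=\check F$. The core is the computation that the matrix of $\AC$ has value $\one$ at $(u,f_u)$. The conjuncts $\mathrm{Fun}(f_u)$ and $\dom(f_u)=u$ follow from the classicality of $\check F$ together with $u\equiv\check X$, transitivity of $\equiv$, and the injectivity (up to $\equiv$) of Kuratowski pairing on canonical names. For the choice conjunct one shows that, for all names $y,v$, the value of $y\in u\wedge\langle y,v\rangle\in f_u\wedge\exists z\,(z\in y)\to v\in y$ is $\one$: if the antecedent is $\zero$ this is automatic; otherwise $\lb y\in u\rb_\ba\neq\zero$ forces $y\equiv\check a$ for some $a\in X$, $\lb\langle y,v\rangle\in f_u\rb_\ba\neq\zero$ forces $\langle y,v\rangle\equiv\langle\check a,\check{F(a)}\rangle$ (matching first coordinates) hence $v\equiv\check{F(a)}$, and $\lb\exists z\,(z\in y)\rb_\ba\neq\zero$ forces $a\neq\emptyset$, so $F(a)\in a$; from $y\equiv\check a$ one finds $w\in\dom(y)$ with $y(w)\neq\zero$ and $w\equiv\check{F(a)}\equiv v$, so $\lb v\in y\rb_\ba\geq y(w)\neq\zero$, and once the consequent is nonzero the coarseness of $\Rightarrow$ makes the whole implication $\one$.

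The step I expect to be the \emph{main obstacle} is exactly this last verification. The arithmetic of Kuratowski pairs and of the suprema and infima defining $\lb\cdot\in\cdot\rb_\ba$ and $\lb\cdot=\cdot\rb_\ba$ must be done by hand, in the spirit of Bell's treatment of the Boolean case; and one must be careful that what rescues each step is the $\{\zero,\one\}$-valuedness of $\Rightarrow$ — a conjunct is $\one$ as soon as its consequent is nonzero — rather than any substitutivity of equals into the second argument of $\in$, or modus ponens, both of which can fail for $\lb\cdot\rb_\ba$ over $\mathbb{T}$. Proving the collapsing lemma and the classicality of canonical names rigorously in this non-Boolean setting (again by $\in$-recursion, tracking the $\{\zero,\one\}$-valuedness of $\Rightarrow$) is the only other part requiring care; everything else is routine.
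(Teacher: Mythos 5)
The paper does not prove this statement: it is imported verbatim from \cite{Taraf} (Theorem 22), so there is no in-paper argument to compare yours against. Judged on its own, your proof is correct and self-contained, and its engine is the right one: for $\mathbb{T}\in\mathcal{W}$ the implication $\Rightarrow$ is $\{\zero,\one\}$-valued, hence so is $\lb u=v\rb_\ba$, and the collapsing map $\pi$ you define by $\in$-recursion really does satisfy $\lb u=\check{\pi(u)}\rb_\ba=\one$ (each conjunct $u(v)\Rightarrow\lb v\in\check{\pi(u)}\rb_\ba$ is $\one$ because the consequent is nonzero by the induction hypothesis, and dually for the $\check a\in u$ conjuncts). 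Once every name is $\equiv$ to a check name and check names behave classically, choosing $F$ in $\mathbf{V}$ and taking $f_u=\check F$ reduces the whole verification to $\mathbf{V}\models\AC$, and $\lb\AC\rb_\ba=\one$ lies in every designated set. Two points you flag deserve the emphasis you give them: (a) you must only ever substitute equals into the \emph{first} argument of $\in$ (i.e., use $\lb v\in y\rb_\ba\geq y(w)\wedge\lb w=v\rb_\ba$), never into the second, since the latter substitutivity can fail under $\lb\cdot\rb_\ba$ over $\mathbb{T}$ --- your verification respects this; and (b) transitivity and symmetry of $\equiv$, and the fact that a nonzero supremum in a totally ordered complete lattice has a nonzero term, are the two small lemmas that make the ``antecedent nonzero forces\dots'' steps legitimate. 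With those recorded, the argument goes through; it is also consistent in spirit with how Theorems \ref{thm: RIA valued model of NFF-ZF} and \ref{theorem: T valued models under BA} are obtained for these algebras, where the two-valuedness of $\Rightarrow$ does all the work.
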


\subsection{Designated $\co$-algebra}

In this section, we introduce a class of structures, the (designated) \textsf{Cobounded} algebras,  that generalizes the class of $\mathcal{W}^+$-algebras. 

\begin{defi}
An algebra $\A = \langle \mathbf{A}, \wedge, \vee, \Rightarrow, \one, \zero \rangle$ is called a $\co$-algebra if
\begin{enumerate}
    \item[(i)] $\langle \mathbf{A}, \wedge, \vee,\one, \zero \rangle$ is a complete distributive lattice,
    \item[(ii)] if $\bigvee\limits_{i \in I}a_i = \one$, then there exists $j \in I$ such that $a_j = \one$, where $I$ is an index set and $a_i \in A$ for all $i \in I$,
    \item[(iii)] if $\bigwedge\limits_{i \in I}a_i = \zero$, then there exists $j \in I$ such that $a_j = \zero$, where $I$ is an index set and $a_i \in A$ for all $i \in I$, 
    \item[(iv)] the binary operator  $\Rightarrow$ is defined as follows: for $a, b \in A$
\begin{align*}
    a \Rightarrow b & = \left \{ \begin{array}{ll}
                        \zero, & \mbox{if } a \neq \zero \mbox{ and } b = \zero; \\
                        \one, & \mbox{otherwise.}
                      \end{array}\right.
\end{align*}
\end{enumerate}
\end{defi}

Notice that a $\co$-algebra does not contain the unary operator $^*$, which is considered as the algebraic interpretation of the unary connective $\lnot$ of $\mathcal{L}_\in$. We then extend the structure of $\co$-algebra by introducing a unary operator $^*$ with the help of a designated set.

\begin{defi}\label{Definition: MTV algebra}
Let $\mathbb{A} = \langle \mathbf{A}, \wedge, \vee, \Rightarrow,^*, \one, \zero \rangle$ be an algebra and $D$ be a designated set of $\A$. The pair $(\mathbb{A},D)$ is called a
designated $\mathsf{Cobounded}$-algebra  if 
\begin{enumerate}
    \item[(i)] $\langle \mathbf{A}, \wedge, \vee, \Rightarrow, \one, \zero \rangle$ is a $\mathsf{Cobounded}$-algebra,
    \item[(ii)] the operator $^*$ is defined as follows: for $a, b \in A$
\begin{align*}
   a^{*} & = \left \{\begin{array}{ll}
                        \zero, & \mbox{if } a = \one;  \\
                        a, & \mbox{if } a \in D \setminus \{\one\};\\
                        \one, & \mbox{if } a \notin D.
                        \end{array}\right.
\end{align*}
\end{enumerate}
A designated $\mathsf{Cobounded}$-algebra $(\A, D)$ is called an ultra-designated $\mathsf{Cobounded}$-algebra, if the corresponding designated set $D$ is an ultrafilter.
\end{defi}

The main result of this paper, Theorem \ref{PAZF}, will be proved with respect to  $\mathsf{Cobounded}$-algebra-valued models.

\begin{fatto}
Every $\mathsf{Cobounded}$-algebra is a deductive reasonable implication algebra, but the converse is not true in general.
\end{fatto}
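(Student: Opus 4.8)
The plan is to verify the four defining conditions P1--P4 directly from the explicit definition of the implication $\Rightarrow$ of a $\co$-algebra, and then to exhibit one deductive reasonable implication algebra that fails to be a $\co$-algebra.

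Let $\A = \langle \mathbf{A}, \wedge, \vee, \Rightarrow, \one, \zero \rangle$ be a $\co$-algebra. Its lattice reduct is a complete distributive lattice by clause (i) of the definition, so the only work is to check P1--P4. The single algebraic fact I will use repeatedly is that $\zero$ is \emph{meet-prime}: applying clause (iii) to the two-element family $\{x, y\}$ shows that $x \wedge y = \zero$ implies $x = \zero$ or $y = \zero$; contrapositively, $x \neq \zero$ and $y \neq \zero$ imply $x \wedge y \neq \zero$. Recall also that $\Rightarrow$ takes only the values $\zero$ and $\one$, with $a \Rightarrow b = \zero$ precisely when $a \neq \zero$ and $b = \zero$. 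For P1, assume $(x \wedge y) \leq z$; if $y \Rightarrow z = \one$ the conclusion is trivial, and if $y \Rightarrow z = \zero$ then $y \neq \zero$ and $z = \zero$, so $x \wedge y = \zero$, whence meet-primeness and $y \neq \zero$ force $x = \zero = (y \Rightarrow z)$. For P2, assume $y \leq z$; if $x \Rightarrow y = \zero$ there is nothing to prove, and if $x \Rightarrow y = \one$ then $x = \zero$ or $y \neq \zero$, and in either case (using $y \leq z$ in the second to conclude $z \neq \zero$) one gets $x \Rightarrow z = \one$. P3 is symmetric: assuming $y \leq z$ and $z \Rightarrow x = \one$, either $x \neq \zero$, giving $y \Rightarrow x = \one$, or $z = \zero$, so $y = \zero$ and again $y \Rightarrow x = \one$. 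Finally, for P4 both sides lie in $\{\zero, \one\}$, so it suffices to see that they vanish simultaneously: $(x \wedge y) \Rightarrow z = \zero$ iff $x \wedge y \neq \zero$ and $z = \zero$, while $x \Rightarrow (y \Rightarrow z) = \zero$ iff $x \neq \zero$, $y \neq \zero$ and $z = \zero$; since $x \wedge y \leq x, y$ the first condition implies the second, and meet-primeness of $\zero$ gives the converse.

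For the converse it is enough to point to a deductive reasonable implication algebra that is not a $\co$-algebra. The four-element Boolean algebra $\mathbf{B} = \{\zero, a, b, \one\}$, with $a \wedge b = \zero$, $a \vee b = \one$, and the usual Boolean relative pseudocomplement as $\Rightarrow$, is a deductive reasonable implication algebra by the fact recalled earlier that every complete Boolean algebra is one. It is not a $\co$-algebra: its implication takes a value outside $\{\zero, \one\}$ (indeed $a \Rightarrow \zero = b$), in violation of clause (iv), and its lattice reduct already violates clause (ii) because $a \vee b = \one$ although neither $a$ nor $b$ equals $\one$. (One may likewise note the totally ordered example $[0,1]$ equipped with the $\co$-style implication: the verification above shows it is a deductive reasonable implication algebra, yet it fails clause (iii), since $\bigwedge_{n \geq 1}(1/n) = 0$ while no $1/n$ equals $0$.)

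Nothing here is deep; the verification of P1--P4 is routine case analysis on the value of $\Rightarrow$. The one point that deserves attention is that the whole argument rests on clause (iii) of the definition of a $\co$-algebra — meet-primeness of $\zero$ — which is exactly what tames the otherwise crude-looking implication, and this is also why clause (ii), though needed for later theorems, plays no role in this particular fact.
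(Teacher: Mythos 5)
Your proof is correct and takes essentially the same route as the paper, whose own proof simply states that P1--P4 ``follow from the definitions'' and offers any complete Boolean algebra with more than two elements as the counterexample for the converse. Your explicit identification of binary meet-primeness of $\zero$ (an instance of clause (iii) of the $\co$-algebra definition) as the ingredient that makes P1 and P4 work is a worthwhile detail the paper leaves implicit, and your observation that the four-element Boolean lattice already fails clauses (ii)--(iii) shows the counterexample cannot be repaired by changing the implication.
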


\begin{proof}
The proof follows from the definitions. On the other hand, a counter example for the converse part could be any complete Boolean algebra, having more than two elements.
\end{proof}

\begin{fatto} 
Every $\mathsf{Cobounded}$-algebra $\mathbb{A}$ has a unique atom (i.e. an element $a \in \A$ for which there is no $x \in \A$ such that $\zero < x < a$) and a unique coatom (i.e. an element $c \in \A$ for which there is no $x \in \A$ such that $c < x < \one$). 
\end{fatto}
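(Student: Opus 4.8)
The claim is that every $\co$-algebra has a unique atom and a unique coatom. The plan is to prove existence and uniqueness separately, using the completeness of the lattice together with conditions (ii) and (iii) of the definition of a $\co$-algebra, which assert that suprema and infima ``reflect'' the top and bottom elements respectively.

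\textbf{Uniqueness.} First I would dispense with uniqueness, which is the easy half. Suppose $a_1$ and $a_2$ are two distinct atoms. In a distributive lattice (indeed in any lattice) two distinct atoms satisfy $a_1 \wedge a_2 = \zero$: the meet is below each $a_i$, and since neither $a_i$ can equal the other, minimality of each atom forces the meet to be $\zero$. But then condition (iii) applied to the two-element index set $\{a_1, a_2\}$ yields $a_1 = \zero$ or $a_2 = \zero$, contradicting that atoms are by definition nonzero (there is no $x$ with $\zero < x < a_i$, so in particular $a_i \neq \zero$). Dually, if $c_1 \neq c_2$ are two coatoms, then $c_1 \vee c_2 = \one$ (the join is above each $c_i$, and maximality forces it to $\one$), so condition (ii) gives $c_1 = \one$ or $c_2 = \one$, again a contradiction. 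This step is routine distributive-lattice bookkeeping.

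\textbf{Existence of the atom.} Let $a := \bigwedge (\mathbf{A} \setminus \{\zero\})$, the infimum of all nonzero elements, which exists by completeness. I claim $a$ is an atom. The key point is that $a \neq \zero$: if $a = \zero$, condition (iii) applied to the index set $\mathbf{A} \setminus \{\zero\}$ would produce some nonzero element equal to $\zero$, absurd. So $a > \zero$. Next, $a$ is the least nonzero element, hence there can be no $x$ with $\zero < x < a$ (such an $x$ would be a nonzero element strictly below the infimum of all nonzero elements). Therefore $a$ is the unique atom, matching the uniqueness argument above. Dually, $c := \bigvee(\mathbf{A} \setminus \{\one\})$ is, by condition (ii), not equal to $\one$, hence it is the greatest element strictly below $\one$, hence a coatom (and the unique one).

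\textbf{Main obstacle.} The argument is genuinely short, and the only subtlety I anticipate is making sure the index sets $\mathbf{A}\setminus\{\zero\}$ and $\mathbf{A}\setminus\{\one\}$ are legitimately nonempty so that the suprema/infima are meaningful and conditions (ii)--(iii) bite — this is fine unless $\A$ is the one-element trivial algebra, but a bounded lattice with $\one = \zero$ is excluded once one requires $\one \neq \zero$ (implicit in the existence of a designated set, since $\one \in D$, $\zero \notin D$); one should perhaps remark on this edge case. A second, even more minor point is being careful that ``atom'' here is defined as an element $a$ with no $x$ satisfying $\zero < x < a$, which a priori could be vacuously satisfied by $\zero$ itself — so the verification that $a \neq \zero$ via condition (iii) is exactly what upgrades ``$a$ has nothing strictly below it but above $\zero$'' to ``$a$ is a genuine atom.'' Once these nonemptiness/nontriviality points are noted, the rest is immediate from completeness and the two reflection conditions.
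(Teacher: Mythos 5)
Your proof is correct and follows essentially the same route as the paper's: take $\bigwedge(\mathbf{A}\setminus\{\zero\})$ and $\bigvee(\mathbf{A}\setminus\{\one\})$, use conditions (iii) and (ii) to see these are not $\zero$ and $\one$ respectively, and get uniqueness from the fact that two distinct atoms would meet to $\zero$ (resp.\ two distinct coatoms would join to $\one$). Your extra remarks on the nontriviality of the lattice and on $\zero$ vacuously satisfying the atom condition are reasonable points of care that the paper passes over silently.
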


\begin{proof} Let  $S=\mathbf{A} \setminus \{\one\}$, where $\mathbf{A}$ is the underlying set of $\A$. Then by the definition of $\co$-algebra, $\bigvee S \neq \one$. Let $c = \bigvee S$. Then by the construction, $c$ is a coatom. It is unique as well, otherwise the join of two different coatoms would be $\one$, violating the definition of $\co$-algebra.

Similarly, we can prove that if $X = \mathbf{A} \setminus \{\zero\}$, then $\bigwedge X$ is the unique atom of $\A$.
\end{proof}

\begin{lemma}\label{lemma: ultrafilter of an MTV algebra}
Let $(\A, D)$ be an ultra-designated $\mathsf{Cobounded}$-algebra. Then, $\mathbf{A} \setminus D = \{\zero\}$, where $\mathbf{A}$ is the underlying set of $\A$.
\end{lemma}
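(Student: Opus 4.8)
The plan is to show directly that the complement of $D$ in $\mathbf{A}$ contains no element other than $\zero$. Since $(\A, D)$ is a designated $\co$-algebra, $D$ is a filter and in particular $\zero \notin D$, so $\zero \in \mathbf{A}\setminus D$; the content of the lemma is the reverse inclusion. Suppose, for contradiction, that there is some $a \in \mathbf{A}\setminus D$ with $a \neq \zero$. The goal is to contradict the assumption that $D$ is an \emph{ultra}filter.

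First I would recall that an ultrafilter $D$ on a (bounded, distributive) lattice is, by the usual definition, a proper filter that is maximal among proper filters — equivalently, for the relevant notion here, one expects it to satisfy a primeness/maximality condition to the effect that for every $x \in \mathbf{A}$, either $x \in D$ or $x$ "is excluded" in a suitable sense. The key step is then to exploit property (ii) of the $\co$-algebra (the join of any family that equals $\one$ must already contain $\one$) together with Fact about the unique coatom $c = \bigvee(\mathbf{A}\setminus\{\one\})$. Since $a \neq \zero$ and $a \notin D$, in particular $a \neq \one$ (as $\one \in D$), so $a \leq c$. If $c \in D$, then by upward closure $a$ would have to... no — rather, the point is to look at the filter generated by $D \cup \{a\}$: either it is proper, contradicting maximality of $D$ since $a \notin D$, or it is improper, meaning $\zero$ is a finite meet $d \wedge a$ for some $d \in D$. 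I would then argue that $d \wedge a = \zero$ with $d \in D$ forces, via property (iii) of the $\co$-algebra applied to the two-element meet, that $d = \zero$ or $a = \zero$; since $d \in D$ we have $d \neq \zero$, and $a \neq \zero$ by assumption, a contradiction. Hence no such $a$ exists and $\mathbf{A}\setminus D = \{\zero\}$.

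The main obstacle I anticipate is pinning down exactly which characterization of "ultrafilter" is in force in this paper and making the maximality argument interact cleanly with the $\co$-algebra axioms: in a general distributive lattice an ultrafilter need not be prime unless one assumes something extra, but the $\co$-algebra conditions (ii) and (iii) are exactly finiteness/compactness-type constraints on infinitary joins and meets, and property (iii) in its two-element instance is precisely the statement that $\mathbf{A}\setminus\{\zero\}$ is closed under $\wedge$, i.e. $\zero$ is meet-irreducible. That last observation is really the crux: it says the only way a meet can be $\zero$ is if one of the factors is already $\zero$, so the filter generated by $D$ and any nonzero element stays proper, and maximality of $D$ then forces that element into $D$. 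I would make sure to state this two-element consequence of (iii) explicitly before invoking it, since it is what does all the work.
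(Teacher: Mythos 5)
Your proof is correct and follows essentially the same route as the paper's: both assume a nonzero $a\notin D$, use the two-element instance of property (iii) of $\co$-algebras to see that meets with elements of $D$ stay nonzero, and conclude that $D$ could be properly extended to a filter containing $a$ (the paper extends by $b=d\wedge a$ rather than by $a$ itself, an immaterial difference), contradicting maximality. Your explicit identification of the two-element case of (iii) as the crux is exactly the point the paper's proof relies on.
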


\begin{proof}
Suppose otherwise, so there exists a non-zero element $a \in \mathbf{A} \setminus D$. Consider an element $d \in D$ and suppose $b = (d \wedge a)$. 
Since $a \neq \zero \neq d$, by the definition of designated $\mathsf{Cobounded}$-algebra $b \neq \zero$. Now, $b \leq a$ implies that $b \notin D$, otherwise by the definition of filter $a$ would also be a member of $D$. Since $b \leq d$ and $b \notin D$, the filter $D$ can be extended to another filter which includes the element $b$, which contradicts that $D$ is an ultrafilter.
\end{proof}

\subsection{Examples of designated $\mathsf{Cobounded}$-algebras}

We now show that the collection of designated $\mathsf{Cobounded}$-algebras is a proper class and that the corresponding logics can vary from classical to non classical. 

{\bf Example 1}. The smallest designated $\mathsf{Cobounded}$-algebra, is a two elements algebra consisting of just the top and the bottom elements of a lattice. In this case, the designated set consists of the top element only. When restricted to two elements the operations of a designated $\mathsf{Cobounded}$-algebra corresponds to those of the two-element Boolean algebra. Hence, its corresponding logic is classical propositional logic.

{\bf Example 2}. Any complete distributive lattice having a unique atom and a unique co-atom can be a designated $\mathsf{Cobounded}$-algebra with respect to any designated set and with the proper selection of the operators $\Rightarrow$ and $^*$.
To have a concrete example, consider any complete Boolean algebra (or Heyting algebra) $\mathbb{B} =\langle \mathbf{B}, \wedge, \vee, \Rightarrow_\mathbf{B}, ^{*_\mathbf{B}}, \top, \bot \rangle$, where $\top$ and $\bot$ are respectively the top and bottom elements of $\mathbb{B}$.
Then, add tow elements $\one$ and $\zero$ to the set $\mathbf{B}$ and extend the two operators $\wedge$ and $\vee$ on the resultant set $\mathbf{A}$ (say) as
\[\one \wedge a = a, ~\zero \wedge a = \zero, ~\one \vee a = \one, \mbox{ and } \zero \vee a = a,\]
for all $a \in \mathbf{B}$. Then, $\langle \mathbf{A}, \wedge, \vee, \one, \zero\rangle$ becomes a complete distributive lattice, so that $\one$ and $\zero$ are respectively the top and bottom elements, whereas $\top$ and $\bot$ becomes the unique co-atom and atom, respectively in the extended structure.
Let us now consider any designated set $D$ of $\langle \mathbf{A}, \wedge, \vee, \one, \zero\rangle$ and define two new operators $\Rightarrow$ and $^*$ on $\mathbf{A}$ as in Definition \ref{Definition: MTV algebra}(v). Then, $(\A, D)$ is a designated $\mathsf{Cobounded}$-algebra, where $\A = \langle \mathbf{A}, \wedge, \vee, \Rightarrow, ^*, \one, \zero\rangle$. Hence, the underlying set of a Boolean or Hetying algebra when stretched with a further top and bottom element and equipped with an implication and a negation as in Definition \ref{Definition: MTV algebra} becomes a designated $\mathsf{Cobounded}$-algebra.


{\bf Example 3}. Any $\w$ algebra $\T$ is a designated $\co$-algebra if the designated set is taken as the ultrafilter $D_\T$. Moreover, for any $\w$ algebra $\T$ and any designated set $D$ of it, if we modify the unary operator $^*$ of $\T$ acording to Definition \ref{Definition: MTV algebra} then $(\T, D)$ becomes a designated $\co$-algebra.

\subsection{Main goal of the paper}

One can notice that in the algebra-valued models studied in \cite{Loewe}, \cite{VenSan}, \cite{illoyal}, \cite{TarVen2020}, \cite{Taraf}, \cite{VenTar} etc.
Leibniz's law of indiscernability of identicals fails under the $\lb \cdot \rb_\ba$-assignment function. Concretely, this means that there exist formulas $\varphi(x)$ (which contain the negation symbol) and elements $u$ and $v$ of such algebra-valued models for which 
$\lb u = v \rb_\ba$ and $\lb \varphi(u) \rb_\ba$ are valid, but for which $\lb \varphi(v) \rb_\ba$ is invalid.
Moreover, there exist instances of $\mathsf{Separation}_\varphi$, $\mathsf{Collection}_\varphi$, and $\mathsf{Foundation}_\varphi$ (involving formulas $\varphi$'s which contain negation) which are invalid in $\VA{\mathbb{T}}$ under the assignment function $\lb \cdot \rb_\ba$.

\medskip

\emph{Leibniz's law of indiscernibility of identicals}: 
\begin{equation}
    \forall x \forall y \big{(} (x = y \wedge \varphi(x)) \to \varphi(y)\big{)}
    \tag{$\llaw$}
\end{equation}
We say that $\llaw$ holds in a model, if $\llaw$ is valid there for any formula $\varphi(x)$ of the language. 
It was proved in Theorem 2.12 of  \cite{VenTarSAN3} that $\mathsf{Separation}$ and $\llaw$ fail in every $\mathbb{T}$-valued model in which the domain of $\mathbb{T}$ has more than two elements and considering $D$ a set of designated values with more than one element.

Although the relation 
\[u \sim v \mbox{ if and only if } \lb u = v \rb_\ba \mbox{ is valid},\]
is an equivalence relation between the elements of any $\mathcal{W}$-algebra-valued model $\VA{\mathbb{T}}$, however, the failure of $\llaw$ imposes a dramatic limitation to the construction of a proper ontology for the non-classical set theory of $\VA{\mathbb{T}}$. As a matter of fact, the failure of $\llaw$ prevents one from being able to build a quotient model $\VA{\mathbb{T}}/\!\sim$ in which the interpretation of the equality relations lines up with the equality between the elements of the model. This is an important step in the construction of Boolean-valued models that, unfortunately fails in the construction of $\mathcal{W}$-algebra-valued models.


The main goal of the paper is to solve these problems.
Concretely, we will devise  a new assignment function which can validate, in any \textsf{Cobounded}-algebra-valued model: 
\begin{enumerate}
    \item[(i)] $\llaw$ for all formulas $\varphi$, and
    \item[(ii)] the axioms of a  classically equivalent version of $\ZF$, that we will call $\overline{\ZF}$,
\end{enumerate}
 Moreover, we will show that the resulting algebra-valued models can even be lifted to well-defined quotient models of the corresponding set theory.

This is what motivated the definition of the  $\lb \cdot \rb_\pa$-assignment function. Indeed, Theorem \ref{theorem: validity of LL} will show the validity of  $\llaw$ in \textsf{Cobounded}-algebra-valued models, under the $\lb \cdot \rb_\pa$-assignment function, while Theorem \ref{PAZF} the validity of $\overline{\ZF}$. 

The only difference between $\ZF$ and $\overline{\ZF}$ consists in the reformulation  of $\mathsf{Extentionality}$ in a classically equivalent form. The reason for this reformulation is not only conceptual, but also mathematical.

\begin{observation}\label{observation: failure of Extensionality in PA}
For any designated $\mathsf{Cobounded}$-algebra $(\mathbb{A},D)$, where the domain of $\A$ contains more than two elements, $\VA{\A,~\lb \cdot \rb_\pa} \not\models_{D_\mathbb{A}} \mathsf{Extensionality}$.
\end{observation}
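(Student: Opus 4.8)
The plan is to produce a single pair of names witnessing the failure. Since
$\lb \mathsf{Extensionality}\rb_\pa = \bigwedge_{s,t \in \VA{\A}} \lb \forall z(z \in s \leftrightarrow z \in t) \to s = t\rb_\pa$
is a meet, it suffices to exhibit $u, v \in \VA{\A}$ with $\lb \forall z(z \in u \leftrightarrow z \in v) \to u = v\rb_\pa \notin D$; upward-closure of $D$ then pushes the whole meet outside $D$. Moreover, in a designated $\co$-algebra the operation $\Rightarrow$ takes only the values $\one$ and $\zero$ (with $\one \in D$, $\zero \notin D$), so this reduces to finding $u, v$ with $\lb \forall z(z \in u \leftrightarrow z \in v)\rb_\pa \neq \zero$ and $\lb u = v\rb_\pa = \zero$.

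Using that the domain of $\A$ has more than two elements, fix $b \in \mathbf{A}$ with $\zero < b < \one$. Let $\emptyset$ denote the empty name and put $u = \{\langle \emptyset, \one \rangle\}$ and $v = \{\langle \emptyset, b \rangle\}$, the two ``copies'' of the singleton of the empty set carrying membership degrees $\one$ and $b$. First I would check that $\lb \forall z(z \in u \leftrightarrow z \in v)\rb_\pa = \one$: from $\lb \emptyset = \emptyset\rb_\pa = \one$ one computes $\lb w \in u\rb_\pa = \lb \emptyset = w\rb_\pa$ and $\lb w \in v\rb_\pa = b \wedge \lb \emptyset = w\rb_\pa$ for every name $w$; since $\Rightarrow$ outputs $\one$ whenever its consequent is non-zero and a $\co$-algebra has no zero-divisors (item (iii) of its definition forces $p \wedge q = \zero$ only if $p = \zero$ or $q = \zero$), both $\lb w \in u\rb_\pa \Rightarrow \lb w \in v\rb_\pa$ and its converse equal $\one$ for every $w$, so the universally quantified biconditional has value $\one$. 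Second, in the expansion of $\lb u = v\rb_\pa$ the conjunct attached to $x = \emptyset \in \dom(u)$ contains the factor $\lb \emptyset \in v\rb_\pa^{*} \Rightarrow u(\emptyset)^{*} = b^{*} \Rightarrow \one^{*} = b^{*} \Rightarrow \zero$; since $b \notin \{\zero, \one\}$ we have $b^{*} \in \{b, \one\}$, hence $b^{*} \neq \zero$, hence this factor equals $\zero$, and therefore $\lb u = v\rb_\pa = \zero$.

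Combining, $\lb \forall z(z \in u \leftrightarrow z \in v) \to u = v\rb_\pa = \one \Rightarrow \zero = \zero$, so $\lb \mathsf{Extensionality}\rb_\pa = \zero \notin D$, which is the claim. I do not expect a genuine obstacle: the whole argument is a short direct computation. The only two points deserving care are (a) verifying that the chosen $u$ and $v$ really do have ``the same elements'' under $\lb \cdot \rb_\pa$ — which is exactly where the absence of zero-divisors in $\co$-algebras is used — and (b) observing that it is precisely the contrapositive clause $\lb x \in v\rb_\pa^{*} \Rightarrow u(x)^{*}$, present in $\lb u = v\rb_\pa$ but absent from $\lb u = v\rb_\ba$, that collapses to $\zero$ here. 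This last remark is what explains why $\mathsf{Extensionality}$ must be replaced by its classically equivalent contrapositive-augmented form in $\overline{\ZF}$.
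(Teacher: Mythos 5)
Your proof is correct and uses essentially the same counterexample as the paper: two names supported on a single common element, one with value $\one$ and one with value $b$ strictly between $\zero$ and $\one$ (the paper takes an arbitrary $w$ in place of $\emptyset$ and swaps which name carries $\one$, which is immaterial by symmetry). You merely spell out the computation that the paper leaves as ``it can be checked,'' correctly identifying the contrapositive conjunct $\lb x \in v\rb_\pa^{*} \Rightarrow u(x)^{*}$ as the factor that collapses to $\zero$.
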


\begin{proof}
Arbitrarily fix two elements $u = \{( w, a )\}$ and $v = \{(w, \one)\}$, where $w \in \VA{\A}$ and $a$ be an element of $\A$ so that $\zero < a < \one$. Then, $\lb \forall z (z \in u \leftrightarrow z \in v) \rb_\pa \in D_{\A}$ but it can be checked that $\lb u = v \rb_\pa = \zero \notin D_{\A}$.
\end{proof}

On closer inspection, the failure of $\mathsf{Extensionality}$ in $\VA{\A,~\lb \cdot \rb_\pa}$ is not surprising. Indeed, the interpretation of $=$ under  $\lb \cdot \rb_\pa$ introduces new conjuncts which are the contrapositive counterparts of those considered in $\lb \cdot \rb_\ba$.

For this reason, we are forced to reformulate $\mathsf{Extensionality}$ to match this new interpretation function. 

\begin{equation}
    \forall x \forall y \forall z \big{(}((z \in x \leftrightarrow z \in y) \wedge (\neg(z \in x) \leftrightarrow \neg(z \in y))) \to x = y \big{)}.
    \tag{$\mathsf{\overline{Extensionality}}$}
\end{equation}
\smallskip

\begin{defi}
Let $\overline{\ZF}$ stand for the axiom system $\ZF - \mathsf{Extensionality} + \mathsf{\overline{Extensionality}}$. 
\end{defi}

\noindent It is immediate that $\overline{\ZF}$ is classically equivalent to $\ZF$. 

\medskip

We conclude this section by showing how $\lb \cdot \rb_\pa$ can be seen as an extension of $\lb \cdot \rb_\ba$ to non-classical set theory.

\begin{lemma}\label{lemma: value of u = v are same under PA and BA}
For any complete  Boolean algebra $\mathbb{B}$ and $u, v \in \VA{\mathbb{B}}$, 
\begin{enumerate}
    \item[(i)] $\lb u = v \rb_\pa = \lb u = v \rb_\ba$ and
    \item[(ii)] $\lb u \in v \rb_\pa = \lb u \in v \rb_\ba$.
\end{enumerate}
\end{lemma}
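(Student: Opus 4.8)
The plan is to prove both identities (i) and (ii) simultaneously, by transfinite induction on the Hessenberg (natural) sum $\rho(u)\,\#\,\rho(v)$ of the $\in$-ranks of the two names, i.e.\ via the (meta-)induction principle applied to pairs of names. The mathematical content is essentially a single algebraic fact about complete Boolean algebras: there the operation $\Rightarrow$ is material implication, $a\Rightarrow b = a^{*}\vee b$, the operation $^{*}$ is the Boolean complement, which is involutive, $a^{**}=a$, and consequently the contrapositive law $(b^{*}\Rightarrow a^{*})=(a\Rightarrow b)$ holds. This makes every ``extra'' conjunct that $\lb\cdot\rb_\pa$ introduces into the clause for $=$ equal to the conjunct already present in $\lb\cdot\rb_\ba$, so that, by idempotence of $\wedge$, the two clauses collapse onto one another; and the clauses for $\in$ are literally identical in the two definitions.

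First I would record that the defining expressions for $\lb u\in v\rb_\ba$ and $\lb u\in v\rb_\pa$ coincide symbol for symbol: each is $\bigvee_{x\in\dom(v)}\big(v(x)\wedge\lb x=u\rb_{(\cdot)}\big)$. For $x\in\dom(v)$ we have $\rho(x)<\rho(v)$, hence the pair $(x,u)$ has strictly smaller Hessenberg rank-sum than $(u,v)$, and the induction hypothesis (part (i) for that pair) gives $\lb x=u\rb_\pa=\lb x=u\rb_\ba$. Taking the join over $x\in\dom(v)$ yields (ii) for $(u,v)$.

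For (i), consider a conjunct of $\lb u=v\rb_\pa$ of the form $\big(u(x)\Rightarrow\lb x\in v\rb_\pa\big)\wedge\big(\lb x\in v\rb_\pa^{*}\Rightarrow u(x)^{*}\big)$ with $x\in\dom(u)$. Working inside $\mathbb{B}$,
\[
\lb x\in v\rb_\pa^{*}\Rightarrow u(x)^{*}\;=\;\big(\lb x\in v\rb_\pa^{*}\big)^{*}\vee u(x)^{*}\;=\;\lb x\in v\rb_\pa\vee u(x)^{*}\;=\;u(x)\Rightarrow\lb x\in v\rb_\pa,
\]
so, by idempotence of $\wedge$, the whole conjunct reduces to $u(x)\Rightarrow\lb x\in v\rb_\pa$. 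Since $\rho(x)<\rho(u)$, the pair $(x,v)$ has strictly smaller rank-sum, and the induction hypothesis (part (ii)) gives $\lb x\in v\rb_\pa=\lb x\in v\rb_\ba$; thus the conjunct equals $u(x)\Rightarrow\lb x\in v\rb_\ba$, exactly the corresponding conjunct of $\lb u=v\rb_\ba$. The symmetric argument handles the conjuncts indexed by $y\in\dom(v)$, using part (ii) of the induction hypothesis for the pairs $(y,u)$. Taking the meet over $\dom(u)$ and $\dom(v)$ gives $\lb u=v\rb_\pa=\lb u=v\rb_\ba$, which completes the induction.

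I do not expect a genuine obstacle here; the only point needing a little care is the choice of induction measure. Ordinary ordinal addition is not strictly monotone (for instance $1+\omega=2+\omega$), so one should use the Hessenberg natural sum $\rho(u)\#\rho(v)$, which is strictly increasing in each argument separately. With that measure, every recursive reference occurring in the definitions of $\lb u=v\rb$ and $\lb u\in v\rb$ lowers the rank of exactly one of the two names while leaving the other fixed, hence strictly decreases the measure, so the simultaneous induction is well-founded. (Equivalently, one may run a standard double $\in$-induction on $u$ and $v$.) The base case $\dom(u)=\dom(v)=\emptyset$ is immediate, as all the meets and joins involved are then empty.
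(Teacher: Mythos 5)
Your proof is correct and follows essentially the same route as the paper's: the heart of both arguments is the Boolean identity $a\Rightarrow b=b^{*}\Rightarrow a^{*}$ (i.e.\ involutivity of $^{*}$ plus $a\Rightarrow b=a^{*}\vee b$), which collapses the extra contrapositive conjuncts of $\lb\cdot\rb_\pa$, combined with a well-founded induction on names. The paper merely packages the induction differently --- a single (meta-)induction on $u$ with $v$ universally quantified, unfolding $\lb x\in v\rb$ one level inside the proof of (i) so that (ii) then follows as a one-line corollary --- whereas you run a simultaneous induction on the pair via the natural sum of ranks; both organizations are sound.
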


\begin{proof}
(i) The proof will be done using the meta-induction principle on sets. Let us choose an arbitrary element $u \in \VA{\mathbb{B}}$ and assume that for any $x \in \dom(u)$, $\lb x = v \rb_\pa = \lb x = v \rb_\ba$, for all $v \in \VA{\mathbb{B}}$. Now, we prove that  $\lb u = v \rb_\pa = \lb u = v \rb_\ba$.
\allowdisplaybreaks
\begin{align*}
    \lb u = v \rb_\pa & = \bigwedge_{x \in \dom(u)} \big{(}(u(x) \Rightarrow  \lb x \in v \rb_\pa) \wedge (\lb  x \in v\rb_\pa^* \Rightarrow  u(x)^*)\big{)}\\
    & \qquad \qquad \wedge \bigwedge_{y \in \dom(v)} \big{(}(v(y) \Rightarrow  \lb y \in u \rb_\pa) \wedge ( \lb y \in  u \rb_\pa^* \Rightarrow  v(y)^*  )\big{)}\\
    & = \bigwedge_{x \in \dom(u)} \big{(}u(x) \Rightarrow  \lb x \in v \rb_\pa \big{)} \wedge \bigwedge_{y \in \dom(v)} \big{(}v(y) \Rightarrow  \lb y \in u \rb_\pa\big{)},\\
    & \qquad \qquad \qquad \mbox{(since for any two elements $a$ and $b$ of $\mathbb{B}$, $a \Rightarrow b = b^* \Rightarrow a^*$)}\\
    & = \bigwedge_{x \in \dom(u)} \big{(}u(x) \Rightarrow \bigvee_{y \in \dom(v)}(v(y) \wedge \lb x = y \rb_\pa)\big{)}\\
    & \qquad \qquad \wedge \bigwedge_{y \in \dom(v)} \big{(}v(y) \Rightarrow \bigvee_{x \in \dom(u)}(u(x) \wedge \lb x = y \rb_\pa)\big{)},\\
    & = \bigwedge_{x \in \dom(u)} \big{(}u(x) \Rightarrow \bigvee_{y \in \dom(v)}(v(y) \wedge \lb x = y \rb_\ba)\big{)}\\
    & \qquad \qquad \wedge \bigwedge_{y \in \dom(v)} \big{(}v(y) \Rightarrow \bigvee_{x \in \dom(u)}(u(x) \wedge \lb x = y \rb_\ba)\big{)},\\
    & \qquad \qquad \qquad \mbox{ (by the induction hypothesis)}\\
    & = \bigwedge_{x \in \dom(u)} \big{(}u(x) \Rightarrow  \lb x \in v \rb_\ba \big{)} \wedge \bigwedge_{y \in \dom(v)} \big{(}v(y) \Rightarrow  \lb y \in u \rb_\ba\big{)},\\
    & = \lb u = v \rb_\ba
\end{align*}
Hence, by induction we can conclude that for any $u, v \in \VA{\mathbb{B}}, ~\lb u = v \rb_\pa = \lb u = v \rb_\ba$.
(ii) Let us take any $u, v \in \VA{\mathbb{B}}$. Then,
\begin{align*}
    \lb u \in v \rb_\pa & = \bigvee_{y \in \dom(v)}(v(y) \wedge \lb y = u \rb_\pa)\\
    & = \bigvee_{y \in \dom(v)}(v(y) \wedge \lb y = u \rb_\ba), \mbox{ by using (i)}\\
    & = \lb u \in v \rb_\ba
\end{align*}
This completes the proof.
\end{proof}

\begin{teo}\label{theorem: importance of PA-assignment}
For any complete  Boolean algebra $\mathbb{B}$ and any sentence $\varphi$ of $\mathcal{L}_\mathbb{B}$, $\lb \varphi \rb_\ba = \one$ iff $\lb \varphi \rb_\pa = \one$, i.e., $\VA{\mathbb{B}, ~\lb \cdot \rb_\ba} \models \varphi$ iff $\VA{\mathbb{B}, ~\lb \cdot \rb_\pa} \models \varphi$.
\end{teo}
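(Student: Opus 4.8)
The plan is to establish the stronger statement that $\lb\varphi\rb_\ba = \lb\varphi\rb_\pa$ for \emph{every} sentence $\varphi$ of $\mathcal{L}_\mathbb{B}$, whence the claimed equivalence (and, with designated set $\{\one\}$, its model-theoretic reformulation) is immediate. The proof is by induction on the complexity of $\varphi$. The base case is precisely Lemma \ref{lemma: value of u = v are same under PA and BA}: the atomic sentences of $\mathcal{L}_\mathbb{B}$ are exactly those of the form $u\in v$ and $u=v$ with $u,v\in\VA{\mathbb{B}}$, and the lemma asserts that the two assignment functions agree on all of them. It is crucial that the lemma is phrased for \emph{all} names $u,v$ at once, because once the leading quantifiers are stripped off a deeply nested $\varphi$ the atomic subformulas will in general carry arbitrary elements of $\VA{\mathbb{B}}$ as parameters.

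For the inductive step, recall that by definition both $\lb\cdot\rb_\ba$ and $\lb\cdot\rb_\pa$ are extended to compound formulas by the \emph{same} homomorphic clauses: $\lb\top\rb_X=\one$, $\lb\bot\rb_X=\zero$, $\lb\varphi\wedge\psi\rb_X=\lb\varphi\rb_X\wedge\lb\psi\rb_X$, and likewise for $\vee$, for $\to$ (interpreted by $\Rightarrow$) and for $\neg$ (interpreted by $^*$), together with $\lb\forall x\,\varphi(x)\rb_X=\bigwedge_{u\in\VA{\mathbb{B}}}\lb\varphi(u)\rb_X$ and $\lb\exists x\,\varphi(x)\rb_X=\bigvee_{u\in\VA{\mathbb{B}}}\lb\varphi(u)\rb_X$. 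So if the two values already agree on every proper subformula --- in the quantifier cases, on $\varphi(u)$ for each $u\in\VA{\mathbb{B}}$, each of which has strictly smaller complexity than $\forall x\,\varphi(x)$ --- then they agree on $\varphi$ as well: the propositional cases are a one-line computation, and the quantifier cases hold because both the meet and the join are computed in the single complete lattice $\langle\mathbf{B},\wedge,\vee,\one,\zero\rangle$ over families that are equal term by term (completeness of $\mathbb{B}$ being what guarantees these infima and suprema exist in the first place).

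This closes the induction, giving $\lb\varphi\rb_\ba=\lb\varphi\rb_\pa$ for every sentence $\varphi$, and in particular $\lb\varphi\rb_\ba=\one$ iff $\lb\varphi\rb_\pa=\one$. I do not expect any genuine obstacle: the whole of the mathematical content is already absorbed into Lemma \ref{lemma: value of u = v are same under PA and BA}, which itself rests on the Boolean identity $a\Rightarrow b = b^*\Rightarrow a^*$ collapsing the extra contrapositive conjuncts that distinguish $\lb\cdot\rb_\pa$ from $\lb\cdot\rb_\ba$; granting that lemma, the theorem is a formality about homomorphic extensions and is precisely the manifestation, at the level of arbitrary sentences, of the paper's guiding observation that the two interpretations coincide in the classical setting.
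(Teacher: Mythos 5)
Your proposal is correct and follows exactly the paper's own route: the paper also argues by induction on the complexity of $\varphi$, taking Lemma \ref{lemma: value of u = v are same under PA and BA} as the base case and relying on the fact that both assignment functions extend to compound formulas by identical homomorphic clauses. You have merely spelled out the inductive step that the paper leaves as "easily follows," including the (correct) observation that the induction actually yields the stronger conclusion $\lb\varphi\rb_\ba=\lb\varphi\rb_\pa$ for every sentence.
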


\begin{proof}
The proof easily follows by  induction on the complexity of the formula $\varphi$. The base case is proved in Lemma \ref{lemma: value of u = v are same under PA and BA}.
\end{proof}

\section{\textsf{Cobounded}-algebra-valued models under $\lb \cdot \rb_\ba$ and $\lb \cdot \rb_\pa$}\label{section: paraconsistent association function}

In this section, we shall explore the validity of the axioms of set theory in the designated $\co$-algebra-valued models under both the assignment functions $\lb \cdot \rb_\ba$ and $\lb \cdot \rb_\pa$. For this, we first discuss a particular $\co$-algebra $\s$ and its corresponding algebra-valued models (under $\lb \cdot \rb_\ba$-assignment function), which was developed in \cite{Loewe}.

\subsection{The algebra $\s$}\label{section: the algebra PS3}

We first discuss a three-valued algebra $\mathbb{PS}_3 = \langle \{1, \nicefrac 12, 0\}, \wedge, \vee, \Rightarrow \rangle$
having the following operations with the designated set $D = \{1, \nicefrac{1}{2}\}$:

\medskip

\begin{center}
\begin{tabular}{|c|ccc|}
  \hline
  \;$\land$\; &\; 1\; &\; $\nicefrac{1}{2}$\; &\; 0\; \\
  \hline
  \;1\; &\; 1\; &\; $\nicefrac 12$\; &\; 0\; \\
  \;$\nicefrac{1}{2}$\; &\; $\nicefrac 12$\; &\; $\nicefrac 12$\; &\; 0\; \\
  \;0\; &\; 0\; &\; 0\; &\; 0\; \\
  \hline
\end{tabular}
\hspace*{1 cm}
\begin{tabular}{|c|ccc|}
  \hline
  \;$\lor$\; &\; 1\; &\; $\nicefrac{1}{2}$\; &\; 0\; \\
  \hline
  \;1\; &\; 1\; &\; 1\; &\; 1\; \\
  \;$\nicefrac{1}{2}$\; &\; 1\; &\; $\nicefrac 12$\; &\; $\nicefrac 12$\; \\
  \;0\; &\; 1\; &\; $\nicefrac 12$\; &\; 0 \\
  \hline
\end{tabular}
\hspace*{1 cm}
\begin{tabular}{|c|ccc|}
  \hline
  \;$\Rightarrow$\; &\; 1\; &\; $\nicefrac{1}{2}$ \;&\; 0\; \\
  \hline
  \;1\; &\; 1\; &\; 1\; &\; 0\; \\
  \;$\nicefrac{1}{2}$\; &\; 1\; &\; 1\; &\; 0\; \\
  \;0 &\; 1\; &\; 1\; &\; 1\;\\
  \hline
\end{tabular}
\end{center}

\medskip

In \cite[Theorem 4.5]{Loewe} we find the proofs that $\s$ is a deductive reasonable implication algebra and that $\bq$ holds for all negation-free formula $\varphi$. This leads to the proof of $\VA{\s, \; \lb \cdot \rb_\ba} \models_D \nff$-$\ZF$ \cite[Corollary 5.2]{Loewe}. 

It is easy to check that the algebra $\s$ is a $\co$-algebra as well and, moreover, if we extend $\s$ by adding a unary operator $^*$ defined as:
\[1^* = 1, \; \nicefrac{1}{2}^* = \nicefrac{1}{2}, \; 0^* = 0,\]
the resulting structure, $\ps$ becomes an ultra-designated $\co$-algebra, for the same designated set $D = \{1, \nicefrac{1}{2}\}$.

\subsection{$\co$-algebra-valued models under $\lb \cdot \rb_\ba$}

We have already notices that  the two-element $\co$-algebra coincides with  the two-element Boolean algebra. Hence, the two-element $\co$-algebra-valued model is nothing but the two-element Boolean-valued model of $\ZFC$. 

From now on for a given $\co$-algebra $\A = \langle \mathbf{A}, \wedge, \vee, \Rightarrow, \one, \zero \rangle$, we shall denote a function from $\A$ into $\s$ as $f_\A$, which will be defined by:
\begin{align*}
   f_\A(a) & = \left \{\begin{array}{ll}
                        1, & \mbox{if } a = \one;  \\
                        \nicefrac{1}{2}, & \mbox{if } a \in \mathbf{A} \setminus \{\one, \zero\};\\
                        0, & \mbox{if } a = \zero.
                        \end{array}\right.
\end{align*}

\begin{lemma}\label{lemma: f_A is a homomorphism}
For any $\co$-algebra $\A$, $f_\A$ is a homomorphism.
\end{lemma}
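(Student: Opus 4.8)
The plan is to verify directly that $f_\A$ preserves each of the operations $\wedge$, $\vee$, $\Rightarrow$ and the constants $\one$, $\zero$, using the structural facts about $\co$-algebras established earlier. Preservation of constants is immediate from the definition of $f_\A$. For the binary operations, I would split into cases according to whether the arguments equal $\one$, equal $\zero$, or lie strictly in between; since both $\A$ and $\s$ are $\co$-algebras with $\s$ three-valued, the case analysis is finite and mirrors the operation tables of $\s$.

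First I would handle $\wedge$. If either argument is $\zero$, then $a \wedge b = \zero$ in $\A$ and $f_\A(a) \wedge f_\A(b) = 0$ in $\s$, matching. If neither is $\zero$ but at least one, say $a$, is not $\one$, then $a \wedge b \leq a < \one$, and I must argue $a \wedge b \neq \zero$: this is exactly condition (iii) in the definition of a $\co$-algebra (a finite meet of non-zero elements is non-zero), so $f_\A(a \wedge b) = \nicefrac12$, which agrees with the value read off the $\s$-table for $\wedge$ when at least one argument is $\nicefrac12$ and none is $0$. If both arguments are $\one$, then $a \wedge b = \one$ and both sides give $1$. The treatment of $\vee$ is dual: if either argument is $\one$ the join is $\one$ (both sides $1$); if neither is $\one$ but at least one is non-zero, then $a \vee b \neq \one$ by condition (ii) of the $\co$-algebra definition and $a \vee b \neq \zero$ since it dominates a non-zero element, so $f_\A(a\vee b) = \nicefrac12$; if both are $\zero$ the join is $\zero$.

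Next I would check $\Rightarrow$. Here the definition of $\Rightarrow$ in a $\co$-algebra is the same ``two-valued'' clause as in $\s$: $a \Rightarrow b = \zero$ exactly when $a \neq \zero$ and $b = \zero$, and $\one$ otherwise. So $f_\A(a \Rightarrow b)$ is $0$ precisely when $f_\A(a) \neq 0$ and $f_\A(b) = 0$, and $1$ otherwise; comparing with the $\Rightarrow$-table of $\s$ (whose only $0$ entries occur in the column $b = 0$ with $a \in \{1,\nicefrac12\}$) shows the two agree in every case. Finally, I would note that if the paper's notion of homomorphism also requires preservation of $^*$, then since $\ps$ carries $1^* = 1$, $\nicefrac12^{\,*} = \nicefrac12$, $0^* = 0$ while a designated $\co$-algebra has $\one^* = \zero$ (not $\one$), one should read ``homomorphism'' here as being with respect to the lattice-and-implication reduct $\langle \mathbf{A}, \wedge, \vee, \Rightarrow, \one, \zero\rangle$; I would make this scope explicit.

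The only real subtlety — the ``hard part'' in an otherwise routine verification — is the step where non-triviality of the middle value must be maintained, i.e.\ that a finite meet of non-bottom elements is non-bottom and a finite join of non-top elements is non-top. But these are precisely clauses (ii) and (iii) in the definition of a $\co$-algebra (instantiated with a two-element index set), so no new work is needed; one simply has to be careful to invoke them rather than assume them, since in a general distributive lattice a meet of non-zero elements can be zero.
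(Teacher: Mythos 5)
Your proof is correct and follows essentially the same route as the paper's: a direct case analysis on whether each argument is $\one$, $\zero$, or strictly in between, with the observation that $\Rightarrow$ is two-valued so only the $0$-cases need checking. In fact your version is slightly more complete than the paper's, which only treats the mixed case $a=\one$, $b$ intermediate for $\wedge$ and omits the case where both arguments are intermediate — precisely the case where clauses (ii) and (iii) of the $\co$-algebra definition are needed to keep the meet off $\zero$ (resp.\ the join off $\one$), as you correctly point out; your caveat that the homomorphism is only with respect to the $\langle\wedge,\vee,\Rightarrow,\one,\zero\rangle$ reduct is also apt, since a $\co$-algebra carries no $^*$.
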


\begin{proof}
Let $a, b \in \mathbf{A}$ be any two elements. To prove that $f_\A$ is a homomorphism, we need to prove the following facts.
\begin{enumerate}
    \item[(i)] $f_\A(a \wedge b) = f_\A(a) \wedge f_\A(b)$.\\
    If both of $a$ and $b$ are $\one$ or at least one of them is $\zero$, the proof is immediate. Suppose $a = \one$ and $b \in \mathbf{A} \setminus \{\one, \zero\}$. Then,
    \[f_\A(a \wedge b) = f_\A(b) = \nicefrac{1}{2} = 1 \wedge \nicefrac{1}{2} = f_\A(a) \wedge f_\A(b).\]
    \item[(ii)] $f_\A(a \vee b) = f_\A(a) \vee f_\A(b)$.\\
    The proof is similar as Case I.
    \item[(iii)]  $f_\A(a \Rightarrow b) = f_\A(a) \Rightarrow f_\A(b)$.\\
    Note that, both of $f_\A(a \Rightarrow b)$ and $f_\A(a) \Rightarrow f_\A(b)$ will either be 1 or 0. It is sufficient to prove that $f_\A(a \Rightarrow b) = 0$ iff $f_\A(a) \Rightarrow f_\A(b) = 0$. We have,
    \begin{align*}
        f_\A(a \Rightarrow b) = 0 & \mbox{ iff } b = \zero \mbox{ and } a \neq \zero\\
        & \mbox{ iff } f_\A(b) = 0 \mbox{ and } f_\A(a) \neq 0\\ 
        & \mbox{ iff } f_\A(a) \Rightarrow f_\A(b) = 0.
    \end{align*}
\end{enumerate}
\end{proof}

\begin{lemma}\label{lemma: completeness of the homomorphism}
For any $\co$-algebra $\A$ and a subset $\{a_i : i \in I\}$ of the domain of $\A$, $f_\A \big{(} \bigwedge_{i \in I} a_i \big{)} = \bigwedge_{i \in I}  \big{(} f_\A(a_i) \big{)}$ and $f_\A \big{(} \bigvee_{i \in I} a_i \big{)} = \bigvee_{i \in I}  \big{(} f_\A(a_i) \big{)}$, where $I$ is an index set.
\end{lemma}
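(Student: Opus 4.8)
The plan is to prove the two claimed equalities by a case analysis based on the structure of a $\co$-algebra, exploiting the crucial fact that a $\co$-algebra's infinitary joins and meets are ``well-behaved'' in the sense of conditions (ii) and (iii) of the definition: an infinite join equals $\one$ only when some summand is already $\one$, and dually for meets. This reduces the computation of $f_\A\big(\bigwedge_{i\in I} a_i\big)$ to checking which of the three values $\one$, ``something strictly between'', or $\zero$ the meet takes, and likewise for $\bigvee$.

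First I would handle the meet case. Note that $\bigwedge_{i\in I} a_i = \one$ if and only if $a_i = \one$ for every $i \in I$ (one direction is immediate from $\bigwedge_{i\in I} a_i \le a_i \le \one$; the other is trivial). In that subcase $f_\A\big(\bigwedge_{i\in I} a_i\big) = 1 = \bigwedge_{i\in I} 1 = \bigwedge_{i\in I} f_\A(a_i)$. Next, $\bigwedge_{i\in I} a_i = \zero$; by condition (iii) of the definition of $\co$-algebra there is $j \in I$ with $a_j = \zero$, hence $f_\A(a_j) = 0$, so $\bigwedge_{i\in I} f_\A(a_i) = 0 = f_\A\big(\bigwedge_{i\in I} a_i\big)$. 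The remaining subcase is $\zero < \bigwedge_{i\in I} a_i < \one$, so $f_\A\big(\bigwedge_{i\in I} a_i\big) = \nicefrac12$; here no $a_i$ is $\zero$ (else the meet would be $\zero$) but also not all $a_i = \one$ (else the meet would be $\one$), so there is some $k$ with $f_\A(a_k) = \nicefrac12$ and no $i$ with $f_\A(a_i) = 0$, whence $\bigwedge_{i\in I} f_\A(a_i) = \nicefrac12$ as well. The join case is dual, using condition (ii) in place of (iii): one argues $\bigvee_{i\in I} a_i = \zero$ iff all $a_i = \zero$, that $\bigvee_{i\in I} a_i = \one$ forces some $a_j = \one$ by (ii), and that the intermediate case is handled as before.

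One subtlety worth flagging: the index set $I$ may be empty. In that corner case $\bigwedge_{i\in\emptyset} a_i = \one$ and $\bigvee_{i\in\emptyset} a_i = \zero$, and the right-hand sides are the empty meet and join in $\s$, which are $1$ and $0$ respectively; since $f_\A(\one) = 1$ and $f_\A(\zero) = 0$ this is consistent, so the argument above goes through with the convention that ``for all $i\in I$'' and ``there exists $i \in I$'' are interpreted vacuously. I do not expect any real obstacle here --- the heart of the matter is simply that conditions (ii) and (iii) of the $\co$-algebra definition are exactly what is needed to make $f_\A$ commute with arbitrary joins and meets, and Lemma \ref{lemma: f_A is a homomorphism} has already disposed of the finitary and implication cases. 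The only thing to be careful about is organizing the case split cleanly so that the ``strictly between'' subcase is not overlooked.
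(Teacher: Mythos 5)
Your proof is correct and follows essentially the same route as the paper's: a three-way case split on whether the meet (resp.\ join) is $\one$, strictly intermediate, or $\zero$, invoking conditions (ii) and (iii) of the $\co$-algebra definition exactly where the paper does. The remark about the empty index set is a harmless extra precaution not present in the paper, and the join case is indeed dual as you claim.
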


\begin{proof}
We first prove $f_\A \big{(} \bigwedge_{i \in I} a_i \big{)} = \bigwedge_{i \in I}  \big{(} f_\A(a_i) \big{)}$. If $\bigwedge_{i \in I} a_i = \one$, then all $a_i = \one$. This implies $f_\A(a_i) = \one$, for all $i \in I$. Hence, 
\[f_\A \big{(} \bigwedge_{i \in I} a_i \big{)} = f_\A(\one) = 1 = \bigwedge_{i \in I}  \big{(} f_\A(a_i) \big{)}.\]
If $\bigwedge_{i \in I} a_i \in \mathbf{A} \setminus \{\one, \zero\}$, then there exists $j \in I$ so that $f_\A(a_j) \in \mathbf{A} \setminus \{\one, \zero\}$ and $f_\A(a_i) \in \{1, \nicefrac{1}{2}\}$, for all $i \in I$. Hence, 
\[f_\A \big{(} \bigwedge_{i \in I} a_i \big{)} = \nicefrac{1}{2} = \bigwedge_{i \in I}  \big{(} f_\A(a_i) \big{)}.\]
If $\bigwedge_{i \in I} a_i = \zero$, then by the definition of $\co$-algebra, there exists $j \in I$ such that $a_j = \zero$. So $f_\A(a_j) = 0$. Hence,
\[f_\A \big{(} \bigwedge_{i \in I} a_i \big{)} = f_\A(\zero) = 0 = \bigwedge_{i \in I}  \big{(} f_\A(a_i) \big{)}.\]
To prove $f_\A \big{(} \bigvee_{i \in I} a_i \big{)} = \bigvee_{i \in I}  \big{(} f_\A(a_i) \big{)}$, we can use the similar arguments and the definition of $\co$-algebra.
\end{proof}

\begin{defi}
For a given $\co$-algebra $\A$ and an element $u \in \VA{\A}$ an element $\bar{u} \in \VA{\s}$ is defined as follows:
\[\bar{u} := \{\langle \bar{x}, f_\A(a) \rangle : \langle x , a \rangle \in u\}.\]
\end{defi}
Intuitively, $\bar{u}$ is formed by replacing all the entries of $\one$ and $\zero$ in the formation of $u$ by 1 and 0, respectively; 
and similarly by replacing all the occurrences of any element $a \in \mathbf{A} \setminus \{\one, \zero\}$ in $u$ by $\nicefrac{1}{2}$. 
Hence, for any $u \in \VA{\A}$ there exists a unique $\bar{u} \in \VA{\s}$ and conversely, for any $\bar{u} \in \VA{\s}$ there exists at least one $u \in \VA{\A}$ (more than one if $\A$ contains more than three elements).

\begin{teo}\label{theorem: cobounded equality}
Let $\A$ be a $\co$-algebra. Then, $f_\A(\lb u = v \rb_\ba) = \lb \bar{u} = \bar{v} \rb_\ba$, for all $u, v \in \VA{\A}$.
\end{teo}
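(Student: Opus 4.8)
The plan is to prove, by a simultaneous meta-induction on the names $u$ and $v$ (using the induction principle on $\VA{\A}$ recalled in Section \ref{subsection: generalized algebra-valued models}), the two identities
\[
f_\A(\lb u \in v \rb_\ba) = \lb \bar u \in \bar v \rb_\ba
\qquad\text{and}\qquad
f_\A(\lb u = v \rb_\ba) = \lb \bar u = \bar v \rb_\ba ,
\]
of which the theorem asks only for the second. The engine of the argument is that $f_\A$ is a homomorphism for $\wedge$, $\vee$, $\Rightarrow$ (Lemma \ref{lemma: f_A is a homomorphism}) and, moreover, commutes with arbitrary suprema and infima (Lemma \ref{lemma: completeness of the homomorphism}). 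Hence, once a membership- or equality-value in $\VA{\A}$ is written out in its defining recursive form, $f_\A$ can simply be pushed through every connective and every $\bigvee$ and $\bigwedge$, until one is left with atomic values $\lb x \in w \rb_\ba$ and $\lb x = w \rb_\ba$ with $x$ ranging over some $\dom(u)$ or $\dom(v)$, to which the induction hypothesis applies.

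For the membership step I would fix $u,v$, assume the two identities for all pairs $(y,u)$ with $y \in \dom(v)$, and compute
\begin{align*}
f_\A(\lb u \in v \rb_\ba)
&= f_\A\Big(\bigvee_{y \in \dom(v)} \big(v(y) \wedge \lb y = u \rb_\ba\big)\Big)\\
&= \bigvee_{y \in \dom(v)} \big(f_\A(v(y)) \wedge f_\A(\lb y = u \rb_\ba)\big)
 = \bigvee_{y \in \dom(v)} \big(f_\A(v(y)) \wedge \lb \bar y = \bar u \rb_\ba\big),
\end{align*}
the middle equality by Lemmas \ref{lemma: f_A is a homomorphism} and \ref{lemma: completeness of the homomorphism}, the last by the induction hypothesis. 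It then remains to recognize this join as $\lb \bar u \in \bar v \rb_\ba = \bigvee_{y' \in \dom(\bar v)}\big(\bar v(y') \wedge \lb y' = \bar u \rb_\ba\big)$; this uses that $\dom(\bar v) = \{\bar y : y \in \dom(v)\}$, that $\bar v(\bar y)$ is the supremum of $f_\A(v(y''))$ over all $y'' \in \dom(v)$ with $\bar{y''} = \bar y$, and that $\bar{y''} = \bar y$ forces $\lb \bar{y''} = \bar u \rb_\ba = \lb \bar y = \bar u \rb_\ba$, so that regrouping the join by the value of $\bar y$ and distributing $\wedge$ over $\bigvee$ produces exactly the clause evaluated in $\VA{\s}$.

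For the equality step I would assume the identities for all pairs $(x,v)$ with $x \in \dom(u)$ and all $(y,u)$ with $y \in \dom(v)$, push $f_\A$ through the two outer conjunctions, the $\wedge$, and the implication $\Rightarrow$, and apply the induction hypothesis, obtaining
\begin{align*}
f_\A(\lb u = v \rb_\ba)
&= \bigwedge_{x \in \dom(u)}\!\big(f_\A(u(x)) \Rightarrow \lb \bar x \in \bar v \rb_\ba\big)
\;\wedge\; \bigwedge_{y \in \dom(v)}\!\big(f_\A(v(y)) \Rightarrow \lb \bar y \in \bar u \rb_\ba\big).
\end{align*}
Matching this with $\lb \bar u = \bar v \rb_\ba$ then requires regrouping the two meets by the values $\bar x$ and $\bar y$, using the $\co$-algebra identity $\bigwedge_{i}(a_i \Rightarrow c) = \big(\bigvee_i a_i\big) \Rightarrow c$ — immediate from the definition of $\Rightarrow$, which equals $\zero$ exactly when the antecedent is non-zero and the consequent is $\zero$ — together with $\bar u(\bar x) = \bigvee\{f_\A(u(x'')) : \bar{x''} = \bar x\}$ and the analogous fact for $\bar v$. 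This identifies the two sides and closes the induction.

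I expect the only genuinely delicate point to be this last regrouping in each step. The map $x \mapsto \bar x$ need not be injective on $\dom(u)$, so one cannot compare the $\dom(u)$-indexed join/meet with the $\dom(\bar u)$-indexed one term by term; the regrouping argument via distributivity and the identity $\bigwedge_i(a_i \Rightarrow c) = (\bigvee_i a_i) \Rightarrow c$ is precisely what bridges the two indexings. Everything else is the mechanical observation that $f_\A$, being a complete homomorphism onto $\s$, transports each recursive clause defining $\lb \cdot \rb_\ba$ over $\VA{\A}$ to the corresponding clause over $\VA{\s}$.
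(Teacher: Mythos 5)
Your proposal is correct and follows essentially the same route as the paper: a meta-induction in which the complete homomorphism $f_\A$ (Lemmas \ref{lemma: f_A is a homomorphism} and \ref{lemma: completeness of the homomorphism}) is pushed through every connective, $\bigwedge$ and $\bigvee$ in the recursive clauses for $\lb u = v \rb_\ba$ and $\lb u \in v \rb_\ba$, with the induction hypothesis applied to the atomic values over $\dom(u)$ and $\dom(v)$; the paper merely packages the $\in$-identity as a separate consequence (Theorem \ref{theorem: cobounded belongingness}) rather than carrying it along in a simultaneous induction. The one place you go beyond the paper is the explicit regrouping of the $\dom(u)$-indexed meets and joins into $\dom(\bar u)$-indexed ones via $\bigwedge_i(a_i \Rightarrow c) = (\bigvee_i a_i) \Rightarrow c$ and distributivity in $\s$ --- the paper performs this re-indexing silently, so your treatment of the non-injectivity of $x \mapsto \bar x$ is a welcome clarification rather than a deviation.
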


\begin{proof}
We shall prove the theorem using the (meta) induction. Let the theorem hold for all elements $x \in \dom(u)$, i.e., $f_\A(\lb x = v \rb_\ba) = \lb \bar{x} = \bar{v} \rb_\ba$, for all $v \in \VA{\A}$. It is now sufficient to prove that $f_\A(\lb u = v \rb_\ba) = \lb \bar{u} = \bar{v} \rb_\ba$, for all $v \in \VA{\A}$. Using Lemma \ref{lemma: completeness of the homomorphism} in the necessary steps we have,
\allowdisplaybreaks
{
\begin{align*}
    f_\A(\lb u = v \rb_\ba) & = f_\A \big{(}  \bigwedge_{x \in \mathrm{dom}(u)} (u(x) \Rightarrow \lb x \in v \rb_\ba) \land \bigwedge_{y \in \mathrm{dom}(v)} (v(y) \Rightarrow \lb y \in u \rb_\ba ) \big{)}\\
    & = \bigwedge_{x \in \mathrm{dom}(u)} \big{(} f_\A(u(x)) \Rightarrow f_\A (\lb x \in v \rb_\ba)  \big{)} \land \bigwedge_{y \in \mathrm{dom}(v)} \big{(} f_\A(v(y)) \Rightarrow f_\A(\lb y \in u \rb_\ba ) \big{)}\\
    & = \bigwedge_{x \in \mathrm{dom}(u)} \big{(} f_\A(u(x)) \Rightarrow \bigvee_{y \in \dom(v)} \big{(}f_\A(v(y)) \wedge f_\A(\lb x = y \rb_\ba) \big{)} \big{)} \land \\
    & \hspace{2 cm}\bigwedge_{y \in \mathrm{dom}(v)} \big{(} f_\A(v(y)) \Rightarrow \bigvee_{x \in \dom(u)} \big{(}f_\A(u(x)) \wedge f_\A(\lb x = y \rb_\ba) \big{)} \big{)}\\
    & = \bigwedge_{\bar{x} \in \mathrm{dom}(\bar{u})} \big{(} \bar{u}(\bar{x}) \Rightarrow \bigvee_{\bar{y} \in \dom(\bar{v})} \big{(} \bar{v}(\bar{y}) \wedge (\lb \bar{x} = \bar{y} \rb_\ba) \big{)} \big{)} \land \\
    & \hspace{2 cm} \bigwedge_{\bar{y} \in \mathrm{dom}(\bar{v})} \big{(} \bar{v}(\bar{y}) \Rightarrow \bigvee_{\bar{x} \in \dom(\bar{u})} \big{(} \bar{u}(\bar{x}) \wedge (\lb \bar{x} = \bar{y} \rb_\ba) \big{)} \big{)}\\
    & = \bigwedge_{\bar{x} \in \mathrm{dom}(\bar{u})} \big{(} \bar{u}(\bar{x}) \Rightarrow \lb \bar{x} \in \bar{v} \rb_\ba \big{)} \land \bigwedge_{\bar{y} \in \mathrm{dom}(\bar{v})} \big{(} \bar{v}(\bar{y}) \Rightarrow \lb \bar{y} \in \bar{u} \rb_\ba \big{)}\\
    & = \lb \bar{u} = \bar{v} \rb_\ba.
\end{align*}
}
Hence, applying (meta) induction the theorem is proved.
\end{proof}

\begin{teo}\label{theorem: cobounded belongingness}
Let $\A$ be a $\co$-algebra. Then, $f_\A(\lb u \in v \rb_\ba) = \lb \bar{u} \in \bar{v} \rb_\ba$, for all $u, v \in \VA{\A}$.
\end{teo}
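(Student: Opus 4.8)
The plan is to prove Theorem~\ref{theorem: cobounded belongingness} exactly in parallel with Theorem~\ref{theorem: cobounded equality}, but now the proof is a direct computation rather than an inductive one, since it can simply invoke the already-established Theorem~\ref{theorem: cobounded equality} at the crucial step. First I would unfold the definition of $\lb u \in v \rb_\ba$ as the supremum $\bigvee_{y \in \dom(v)}(v(y) \wedge \lb y = u \rb_\ba)$. Then I would push $f_\A$ inside this expression using Lemma~\ref{lemma: completeness of the homomorphism} (to commute $f_\A$ with the arbitrary join) and Lemma~\ref{lemma: f_A is a homomorphism} (to commute $f_\A$ with the binary meet $\wedge$), obtaining $\bigvee_{y \in \dom(v)}\big(f_\A(v(y)) \wedge f_\A(\lb y = u \rb_\ba)\big)$.

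Next I would rewrite $f_\A(\lb y = u \rb_\ba)$ as $\lb \bar y = \bar u \rb_\ba$ by Theorem~\ref{theorem: cobounded equality}, and observe that as $y$ ranges over $\dom(v)$, the pair $(\bar y, f_\A(v(y)))$ ranges exactly over the elements of $\bar v$ by the definition of $\bar v = \{\langle \bar x, f_\A(a)\rangle : \langle x, a\rangle \in v\}$; in other words, $\bar y$ ranges over $\dom(\bar v)$ and $\bar v(\bar y) = f_\A(v(y))$. This turns the expression into $\bigvee_{\bar y \in \dom(\bar v)}\big(\bar v(\bar y) \wedge \lb \bar y = \bar u \rb_\ba\big)$, which by the definition of $\lb \cdot \rb_\ba$ applied inside $\VA{\s}$ is precisely $\lb \bar u \in \bar v \rb_\ba$, completing the proof.

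The only genuinely delicate point — and the one I would state carefully rather than gloss over — is the reindexing step: the map $y \mapsto \bar y$ from $\dom(v)$ onto $\dom(\bar v)$ need not be injective (distinct names $y, y'$ in $\dom(v)$ can have $\bar y = \bar y'$), so one must check that the join is unaffected. This is harmless: if $\bar y = \bar y'$ then also $f_\A(\lb y = u \rb_\ba) = \lb \bar y = \bar u \rb_\ba = \lb \bar y' = \bar u \rb_\ba = f_\A(\lb y' = u \rb_\ba)$, and moreover $\bar v(\bar y)$ is well-defined as the join of all $f_\A(v(z))$ over $z$ with $\bar z = \bar y$ — but since $f_\A$ already collapses everything strictly between $\zero$ and $\one$ to $\nicefrac12$, repeated entries contribute the same value, so the supremum over $\dom(v)$ and the supremum over $\dom(\bar v)$ agree. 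Since all the real work (the homomorphism properties and the equality case) has already been done, I expect no substantive obstacle here; the proof is essentially one displayed chain of equalities with a one-line remark about the indexing.
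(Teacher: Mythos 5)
Your proof is correct and is essentially identical to the paper's: the same four-line chain of equalities, unfolding $\lb u \in v\rb_\ba$, pushing $f_\A$ through the join and the meet via Lemmas \ref{lemma: completeness of the homomorphism} and \ref{lemma: f_A is a homomorphism}, invoking Theorem \ref{theorem: cobounded equality} on $\lb y = u\rb_\ba$, and reindexing over $\dom(\bar v)$. The reindexing subtlety you flag (non-injectivity of $y \mapsto \bar y$) is real but is silently glossed over in the paper; your extra remark only adds care, though the cleanest justification is distributivity of $\wedge$ over the join rather than ``repeated entries contribute the same value,'' since distinct $y, y'$ with $\bar y = \bar y'$ may well have $f_\A(v(y)) \neq f_\A(v(y'))$.
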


\begin{proof}
We have the following:
\begin{align*}
    f_\A(\lb u \in v \rb_\ba) & = f_\A \big{(} \bigvee_{y \in \dom(v)} (v(y) \wedge \lb y = u \rb_\ba) \big{)}\\
    & = \bigvee_{y \in \dom(v)} \big{(} f_\A(v(y)) \wedge f_\A(\lb y = u \rb_\ba) \big{)}\\
    & = \bigvee_{\bar{y} \in \dom(\bar{v})} \big{(} \bar{v}(\bar{y}) \wedge \lb \bar{y} = \bar{u} \rb_\ba \big{)}, \mbox{ by Theorem \ref{theorem: cobounded equality}}\\
    & = \lb \bar{u} \in \bar{v} \rb_\ba.
\end{align*}
\end{proof}

In simple words,
Theorems \ref{theorem: cobounded equality} and \ref{theorem: cobounded belongingness} states that for any two elements $u, v \in \VA{\A}$, 
\begin{enumerate}
    \item[(i)] if $\lb u = v \rb_\ba = \one$, then $\lb \bar{u} = \bar{v} \rb_\ba = 1$,
    \item[(ii)] if $\lb u = v \rb_\ba = \zero$, then $\lb \bar{u} = \bar{v} \rb_\ba = 0$,
    \item[(iii)] if $\lb u \in v \rb_\ba = \one$, then $\lb \bar{u} \in \bar{v} \rb_\ba = 1$,
    \item[(iv)] if $\lb u \in v \rb_\ba \in \mathbf{A} \setminus \{\one, \zero\}$, then $\lb \bar{u} \in \bar{v} \rb_\ba = \nicefrac{1}{2}$,
    \item[(v)] if $\lb u \in v \rb_\ba = \zero$, then $\lb \bar{u} \in \bar{v} \rb_\ba = 0$.
\end{enumerate}
This result will be extended to any negation-free sentence of the extended language $\mathcal{L}_\A$. To simplify  notation we can use the expressions $\lb \varphi \rb_\ba^\A$ and $\lb \varphi \rb_\ba^{\s}$ to indicate the value of the sentence $\varphi$ under $\lb \cdot \rb_\ba$-assignment function in $\VA{\A}$ and $\VA{\s}$, respectively.

\begin{teo}\label{theorem: value in A and PS3 are same}
Let $\A$ be a $\co$-algebra. For any negation-free sentence $\varphi \in \mathcal{L}_\A$, $f_\A(\lb \varphi \rb_\ba^\A) = \lb \varphi \rb_\ba^{\s}$.
\end{teo}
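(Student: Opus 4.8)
\emph{Approach.} I would prove the statement by induction on the number of logical symbols in $\varphi$, reading the theorem — as is implicit in the notation $\lb\cdot\rb_\ba^{\s}$ — in its parametrised form: for every negation-free formula $\varphi(x_1,\dots,x_n)$ of $\mathcal{L}_\in$ and all $u_1,\dots,u_n\in\VA{\A}$,
\[
f_\A\big(\lb\varphi(u_1,\dots,u_n)\rb_\ba^\A\big)=\lb\varphi(\bar u_1,\dots,\bar u_n)\rb_\ba^{\s}.
\]
The stated theorem is then the case $n=0$, where for a sentence $\psi\in\mathcal{L}_\A$ the quantity $\lb\psi\rb_\ba^{\s}$ is understood as the value in $\VA{\s}$ of the $\mathcal{L}_{\s}$-sentence obtained from $\psi$ by replacing each name $u$ occurring in it by $\bar u$. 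For the atomic cases, where $\varphi$ is $x_i = x_j$ or $x_i\in x_j$, the required identity is precisely Theorems \ref{theorem: cobounded equality} and \ref{theorem: cobounded belongingness}; for $\varphi$ equal to $\bot$ it is $f_\A(\zero)=0$.

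\emph{Inductive step.} For the propositional connectives, if $\varphi$ is $\psi\wedge\chi$, $\psi\vee\chi$, or $\psi\to\chi$, the homomorphic clauses defining $\lb\cdot\rb_\ba$, together with the relevant part of Lemma \ref{lemma: f_A is a homomorphism} ((i), (ii), (iii) respectively) and the induction hypothesis, immediately give $f_\A(\lb\varphi\rb_\ba^\A)=\lb\varphi\rb_\ba^{\s}$; no negation case occurs, since $\varphi$ is negation-free. For a quantifier, say $\varphi=\forall y\,\psi(y)$ (suppressing any remaining parameters), I would start from $\lb\varphi\rb_\ba^\A=\bigwedge_{w\in\VA{\A}}\lb\psi(w)\rb_\ba^\A$, pull $f_\A$ through the meet via Lemma \ref{lemma: completeness of the homomorphism}, and apply the induction hypothesis to obtain $\bigwedge_{w\in\VA{\A}}\lb\psi(\bar w)\rb_\ba^{\s}$. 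Since $w\mapsto\bar w$ maps $\VA{\A}$ onto $\VA{\s}$ (the remark following the definition of $\bar u$), this meet equals $\bigwedge_{w'\in\VA{\s}}\lb\psi(w')\rb_\ba^{\s}=\lb\forall y\,\psi(y)\rb_\ba^{\s}$. The $\exists$ case is dual, using the join clause of Lemma \ref{lemma: completeness of the homomorphism} and surjectivity once more.

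\emph{Main obstacle.} The only subtle point is the quantifier step. First, one must identify the two class-sized ranges $\VA{\A}$ and $\VA{\s}$ over which the quantifier is evaluated, and surjectivity of $w\mapsto\bar w$ is exactly what supplies this. Second, although the meet $\bigwedge_{w\in\VA{\A}}\lb\psi(w)\rb_\ba^\A$ is indexed by a proper class, the set of \emph{values} $\{\lb\psi(w)\rb_\ba^\A : w\in\VA{\A}\}$ is a subset of $\A$, so the meet is genuinely set-indexed and Lemma \ref{lemma: completeness of the homomorphism} applies verbatim; the same remark covers $\bigwedge_{w\in\VA{\A}}f_\A(\lb\psi(w)\rb_\ba^\A)$ and the dual case for $\exists$. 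Everything else is a routine structural induction.
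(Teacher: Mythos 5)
Your proof is correct and follows essentially the same route as the paper's: induction on the complexity of $\varphi$, with the atomic cases supplied by Theorems \ref{theorem: cobounded equality} and \ref{theorem: cobounded belongingness}, the propositional connectives handled via Lemma \ref{lemma: f_A is a homomorphism}, and the quantifiers via Lemma \ref{lemma: completeness of the homomorphism} together with the surjectivity of $u\mapsto\bar u$. Your explicit attention to the translation of names from $\mathcal{L}_\A$ to $\mathcal{L}_{\s}$ and to the fact that the class-indexed meets are really set-indexed on their values tightens points the paper leaves implicit, but does not change the argument.
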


\begin{proof}
The theorem will be proved using mathematical induction on the complexity of the negation-free sentence $\varphi$ of $\mathcal{L}_\A$.\\
The Base Case is proved in Theorem \ref{theorem: cobounded equality} and Theorem \ref{theorem: cobounded belongingness}.\\
Suppose the theorem holds for any sentence having complexity $n$, where $n$ is a natural number.\\
Let $\varphi$ be a sentence having complexity $n+1$. Then, the following cases may occur.
\begin{enumerate}
    \item[(i)] Case I: Let $\varphi := \psi \wedge \gamma$. Then,
    \[\lb \varphi \rb_\ba^\A = \lb \psi \rb_\ba^\A \wedge \lb \gamma \rb_\ba^\A = \lb \psi \rb_\ba^{\s} \wedge \lb \gamma \rb_\ba^{\s} \mbox{ (by induction hypothesis) } = \lb \varphi \rb_\ba^{\s}.\]
    \item[(ii)] Case II: Let $\varphi := \psi \vee \gamma$. The proof is similar to the proof of Case I.
    \item[(iii)] Case III: Let $\varphi := \psi \Rightarrow \gamma$. This proof is also similar to Case I.
    \item[(iv)] Case IV: Let $\varphi := \forall x \psi(x)$. Then,
    \begin{align*}
        f_\A(\lb \varphi \rb_\ba^\A) & = f_\A \big{(} \bigwedge_{u \in \VA{\A}}\lb \psi(u) \rb_\ba^{\A} \big{)}\\
        & = \bigwedge_{u \in \VA{\A}} f_\A \big{(}\lb \psi(u) \rb_\ba^{\A} \big{)}, \mbox{ by Lemma \ref{lemma: completeness of the homomorphism}}\\
        & = \bigwedge_{\bar{u} \in \VA{\s}} f_\A \big{(}\lb \psi(\bar{u}) \rb_\ba^{\s} \big{)}, \mbox{ by Lemma \ref{lemma: completeness of the homomorphism}}\\
        & = f_\A \big{(} \bigwedge_{\bar{u} \in \VA{\s}}\lb \psi(\bar{u}) \rb_\ba^{\s} \big{)}\\
        & = f_\A(\lb \varphi \rb_\ba^{\s})
    \end{align*}
    \item[(v)] Case V: Let $\varphi := \exists x \psi(x)$. The proof is same as the proof of Case IV.
\end{enumerate}
Hence, by using mathematical induction we can conclude that the theorem holds for all negation-free sentences of $\mathcal{L}_\A$.
\end{proof}

\begin{teo}
Let $\A$ be a $\co$-algebra and $D$ be any designated set of it. Then, $\VA{\A, \; \lb \cdot \rb_\ba} \models_D \nff$-$\ZF$.
\end{teo}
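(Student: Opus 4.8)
The plan is to reduce the claim about an arbitrary $\co$-algebra $\A$ to the already-known result for $\s$, using the homomorphism $f_\A$ and the correspondence $u \mapsto \bar u$ developed in Theorems~\ref{theorem: cobounded equality}--\ref{theorem: value in A and PS3 are same}. Recall that $\VA{\s,\ \lb\cdot\rb_\ba}\models_D \nff$-$\ZF$ was established in \cite{Loewe} (with $D=\{1,\nicefrac12\}$); since $\s$ is an ultra-designated $\co$-algebra, the argument given there for $\nff$-$\ZF^-$ goes through for any designated set, and the $\mathsf{Foundation}_\varphi$ schema for negation-free $\varphi$ follows from the $\mathcal{W}$-algebra results (Theorem~\ref{theorem: T valued models under BA}), as $\s$ is a $\w$-algebra of order type $3$. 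So the target is to transfer validity from $\s$ to $\A$.

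First I would observe that it suffices to show: for every negation-free sentence $\varphi\in\mathcal{L}_\in$ and every designated set $D$ of $\A$, if $\lb\varphi\rb_\ba^\s\in D_\s$ then $\lb\varphi\rb_\ba^\A\in D$, where $D_\s=\{1,\nicefrac12\}$. Indeed, each axiom in Figure~\ref{figure: Aximos of ZF} is (after the routine $\neg$-to-$\to\bot$ translation, as in \S\ref{subsection: generalized algebra-valued models}) a negation-free sentence of $\mathcal{L}_\in$, hence a sentence of $\mathcal{L}_\A$ with no constants, so Theorem~\ref{theorem: value in A and PS3 are same} applies directly: $f_\A(\lb\varphi\rb_\ba^\A)=\lb\varphi\rb_\ba^\s$. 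Now I would argue by cases on $f_\A$: the key point is that $f_\A(a)\in\{1,\nicefrac12\}$ if and only if $a\neq\zero$, and $f_\A(a)=1$ iff $a=\one$. Therefore $\lb\varphi\rb_\ba^\s\in D_\s$ forces $f_\A(\lb\varphi\rb_\ba^\A)\neq 0$, hence $\lb\varphi\rb_\ba^\A\neq\zero$. This is not yet enough to conclude membership in an arbitrary filter $D$, so I would split: if $\lb\varphi\rb_\ba^\s=1$ then $\lb\varphi\rb_\ba^\A=\one\in D$; the remaining worry is the case $\lb\varphi\rb_\ba^\s=\nicefrac12$, i.e. $\lb\varphi\rb_\ba^\A\in\mathbf{A}\setminus\{\one,\zero\}$, where membership in $D$ is not automatic.

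The way around this is to notice that for \emph{axioms} of $\nff$-$\ZF$ we do not merely know $\lb\varphi\rb_\ba^\s\in D_\s$; we know $\lb\varphi\rb_\ba^\s=1$. This is because the proofs in \cite{Loewe} and \cite{VenSan} for $\s$ in fact establish that each such axiom gets value exactly $\one$ (the top element), not just a designated value — the designated set enters only through the statement, not the verification. Hence by Theorem~\ref{theorem: value in A and PS3 are same}, $f_\A(\lb\varphi\rb_\ba^\A)=1$, which by definition of $f_\A$ gives $\lb\varphi\rb_\ba^\A=\one$, and $\one\in D$ for every designated set $D$. So the conclusion $\VA{\A,\ \lb\cdot\rb_\ba}\models_D\nff$-$\ZF$ holds for every designated set $D$ of $\A$. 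Alternatively, and perhaps cleaner, one invokes Theorem~\ref{thm: RIA valued model of NFF-ZF}: $\A$ is a $\co$-algebra, hence a deductive reasonable implication algebra (Fact), so it remains only to check that $\bq^\ba$ holds in $\VA{\A}$ for every negation-free $\varphi$; but $f_\A(\lb\forall x(x\in u\to\varphi(x))\rb_\ba)=\lb\forall x(x\in\bar u\to\varphi(x))\rb_\ba^\s=\bigwedge_{\bar v\in\dom(\bar u)}(\bar u(\bar v)\Rightarrow\lb\varphi(\bar v)\rb_\ba^\s)$ using that $\bq$ holds in $\s$, and pulling this back through $f_\A$ via Lemma~\ref{lemma: completeness of the homomorphism} gives $\bq^\ba$ in $\VA{\A}$; this yields $\nff$-$\ZF^-$, and $\mathsf{Foundation}_\varphi$ is handled separately, again transferring from $\s$ through $f_\A$ the fact that the value is $\one$.

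The main obstacle is precisely the gap between ``$\lb\varphi\rb_\ba^\A\neq\zero$'' and ``$\lb\varphi\rb_\ba^\A\in D$'' for a general designated set $D$: naive transfer of designated-value-validity from $\s$ does not suffice, and one genuinely needs the stronger fact that the axioms take value $\one$ in $\VA{\s}$ — a fact implicit in but not highlighted by the cited results. A secondary point requiring care is the interaction of the constant symbols: Theorem~\ref{theorem: value in A and PS3 are same} is stated for sentences of $\mathcal{L}_\A$, and one must make sure that when a schema instance $\varphi$ quantifies over all names, the quantifier ranges correctly transport under $u\mapsto\bar u$ (surjectivity of $u\mapsto\bar u$ onto $\VA{\s}$, already noted after the definition of $\bar u$), so that the $\bigwedge$ and $\bigvee$ over $\VA{\A}$ and over $\VA{\s}$ line up — this is exactly what Cases~IV and~V of Theorem~\ref{theorem: value in A and PS3 are same} secured, so no new work is needed there.
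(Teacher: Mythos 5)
Your main route is correct and is, at bottom, the same reduction the paper uses: transfer the value of each axiom from $\VA{\A}$ to $\VA{\s}$ via $f_\A$ and Theorem~\ref{theorem: value in A and PS3 are same}, and you correctly identify the real obstacle, namely that ``$\lb\varphi\rb_\ba^{\s}\in\{1,\nicefrac12\}$'' only yields ``$\lb\varphi\rb_\ba^{\A}\neq\zero$'', not membership in an arbitrary filter $D$. Where you diverge is in how you close that gap. You appeal to the stronger fact that the axioms receive value exactly $1$ in $\VA{\s}$, justified by inspecting the cited proofs; this is true (for $\nff$-$\ZF^-$ it follows formally from Theorem~\ref{thm: RIA valued model of NFF-ZF} holding for \emph{every} designated set, in particular $\{1\}$, though for $\mathsf{Foundation}_\varphi$ the cited statement is only for the ultrafilter $D_\T$, so there you really are leaning on the internals of the proof). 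The paper instead closes the gap with a purely syntactic observation that makes the argument self-contained: every axiom of $\nff$-$\ZF$ has the form $Q_1x_1\ldots Q_nx_n(\varphi\to\psi)$, and since $\Rightarrow$ in a $\co$-algebra only takes the values $\one$ and $\zero$ (and infima/suprema of such elements stay in $\{\one,\zero\}$), $\lb\mathsf{Ax}\rb_\ba^{\A}\in\{\one,\zero\}$; ruling out $\zero$ by the transfer to $\s$ then gives $\one\in D$ for free. That observation would also tighten your secondary, ``perhaps cleaner'' route via $\bq^{\ba}$, which as written has a small gap: $f_\A$ is not injective, so $f_\A(a)=f_\A(b)$ does not by itself yield the equality required by $\bq^{\ba}$ in $\VA{\A}$ --- you need to note first that both sides are infima of implications, hence lie in $\{\one,\zero\}$, where $f_\A$ \emph{is} injective.
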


\begin{proof}
Let $\mathsf{Ax}$ be an axiom of $\nff$-$\ZF$. Then, it is of the form $Q_1x_1 \ldots Q_nx_n (\varphi \to \psi)$, where $Q_1, \ldots, Q_n$ are quantifiers. 
Therefore, $\lb \mathsf{Ax} \rb_\ba^\A$ is either $\one$ or $\zero$. 
If $\lb \mathsf{Ax} \rb_\ba^\A = \zero$, then $f_\A(\lb \mathsf{Ax} \rb_\ba^\A) = 0$ and hence by Theorem \ref{theorem: value in A and PS3 are same}, $\lb \mathsf{Ax} \rb_\ba^{\s} = 0$. 
This leads to a contradiction since $\VA{\s, \; \lb \cdot \rb_\ba} \models_D \nff$-$\ZF$. 
Hence, $\lb \mathsf{Ax} \rb_\ba^\A = \one$ and therefore for any designated set $D$, $\lb \mathsf{Ax} \rb_\ba^\A \in D$. This completes the proof.
\end{proof}

Let us now consider any designated $\co$-algebra $(\A, D)$, having more than two elements. Then by definition, there is a unary operator $^*$ in the structure of $\A$. One may ask whether $\VA{\A, \; \lb \cdot \rb_\ba}$ validates all $\ZF$-axioms instead of only its negation-free fragment: $\nff$-$\ZF$. 

Consider a designated $\co$-algebra $(\A, D)$,
where $\mathbf{A}$ is an ordinal number $\geq 3$. Notice that, $\mathbf{A}$ can neither be a limit ordinal (because $\A$ needs to have a top element) nor a successor of a limit ordinal (because $\A$ needs to haev a coatom). Then, one can be prove that there exists a formula $\varphi(x):= \lnot \exists y (y \in x)$ such that $\VA{\A, \; \lb \cdot \rb_\ba} \not\models_D \mathsf{Separation}_\varphi$ \cite[Theorem 59]{Taraf}. Hence in general, a designated  $\co$-algebra-valued model cannot validate the full $\ZF$ under the $\lb \cdot \rb_\ba$-assignment function.

\subsection{Set-theoretic properties of $\mathsf{Cobounded}$-algebra-valued models under $\lb \cdot \rb_\pa$}

The $\lb \cdot \rb_\pa$ assignment function considerably affects  the way we look at the elements of $\mathsf{Cobounded}$-algebra-valued models. First of all, notice that for any designated \textsf{Cobounded}-algebra $(\mathbb{A},D)$ and any $u, v \in \VA{\A}$,  either $\lb u = v \rb_\pa = \one$ or $\lb u = v \rb_\pa = \zero$.

\begin{teo}\label{theorem: equality corresponding to paraconsistent assignment function}
Consider any ultra-designated $\mathsf{Cobounded}$-algebra $(\mathbb{A},D)$ and any two elements $u, v \in \VA{\A}$.  Then, $\VA{\A, ~ \lb \cdot \rb_\pa} \models_D u = v$ if and only if the following holds: if $x \in \dom(u)$ is such that $u(x) \in D$ then there exists $y \in \dom(v)$ such that $v(y) \in D$ and $\lb x = y \rb_\pa \in D$ and vice-versa; moreover, if $u(x) = \one$, then there exists $y \in \dom(v)$ such that $v(y) = \one$ and $\lb x = y \rb_\pa \in D$ and vice-versa.
\end{teo}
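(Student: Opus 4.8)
The plan is to unwind the definition of $\lb u = v \rb_\pa$ conjunct by conjunct, exploiting that $(\mathbb{A},D)$ is ultra-designated. By Lemma \ref{lemma: ultrafilter of an MTV algebra} we have $\mathbf{A}\setminus D=\{\zero\}$, so ``$a\in D$'' means simply ``$a\neq\zero$'', and the operations of $\mathbb{A}$ take the very simple form on which everything below rests: $a\Rightarrow b=\one$ iff $a=\zero$ or $b\neq\zero$, while $a^*=\zero$ iff $a=\one$ and $a^*\neq\zero$ iff $a\neq\one$. Since it has already been noted that $\lb u=v\rb_\pa\in\{\one,\zero\}$, we have $\lb u=v\rb_\pa\in D$ iff $\lb u=v\rb_\pa=\one$, and a meet equals the top element $\one$ precisely when each of its conjuncts does.

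First I would treat the conjuncts indexed by $x\in\dom(u)$. From the shape of $\Rightarrow$, the conjunct $u(x)\Rightarrow\lb x\in v\rb_\pa$ equals $\one$ iff ``$u(x)\in D$ implies $\lb x\in v\rb_\pa\in D$''; and since $a^*=\zero$ iff $a=\one$, the contrapositive conjunct $\lb x\in v\rb_\pa^*\Rightarrow u(x)^*$ equals $\one$ iff ``$u(x)=\one$ implies $\lb x\in v\rb_\pa=\one$''. The conjuncts indexed by $y\in\dom(v)$ yield the same two implications with $u$ and $v$ interchanged; these will supply the ``vice-versa'' clauses.

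Next I would rewrite the conditions on $\lb x\in v\rb_\pa$ in terms of $\dom(v)$ via $\lb x\in v\rb_\pa=\bigvee_{y\in\dom(v)}\big(v(y)\wedge\lb x=y\rb_\pa\big)$, using symmetry of $\lb\cdot=\cdot\rb_\pa$ (immediate from its definition). Since $\mathbb{A}$ is a $\co$-algebra, an arbitrary join equals $\one$ only if some summand does and equals $\zero$ only if every summand does; and a binary meet is nonzero iff both factors are (here one may also just use $\lb x=y\rb_\pa\in\{\one,\zero\}$). Hence $\lb x\in v\rb_\pa\in D$ iff some $y\in\dom(v)$ has $v(y)\in D$ and $\lb x=y\rb_\pa\in D$, and $\lb x\in v\rb_\pa=\one$ iff some $y\in\dom(v)$ has $v(y)=\one$ and $\lb x=y\rb_\pa=\one$; using once more that $\lb x=y\rb_\pa\in\{\one,\zero\}$, the latter ``$=\one$'' may be restated as ``$\in D$''. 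Substituting these equivalences into the implications of the previous paragraph, and adding the symmetric analysis over $\dom(v)$, produces exactly the claimed biconditional.

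The argument is essentially careful bookkeeping; the one point demanding attention is keeping the two families of conjuncts separate --- the ``$D$-level'' implications arising from $\Rightarrow$ versus the ``$\one$-level'' implications arising from the starred (contrapositive) clauses --- since it is precisely the latter that yield the ``moreover'' part of the statement. A minor point is to make sure the $\co$-algebra axioms governing arbitrary joins and meets are invoked, so that the reasoning stays valid for the possibly infinite index sets $\dom(u)$ and $\dom(v)$.
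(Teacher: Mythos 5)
Your proof is correct and follows essentially the same route as the paper's: both rest on Lemma \ref{lemma: ultrafilter of an MTV algebra} (so that $\mathbf{A}\setminus D=\{\zero\}$), the two-valuedness of $\Rightarrow$ and of $\lb x=y\rb_\pa$, and the separation of each conjunct of $\lb u=v\rb_\pa$ into a ``$D$-level'' implication and a ``$\one$-level'' implication coming from the starred clause. The only difference is organizational --- you run a single chain of equivalences where the paper argues the two directions separately (by contradiction for the forward direction) --- so nothing further is needed.
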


\begin{proof}
Since $D$ is an ultrafilter of $\A$, by  Lemma \ref{lemma: ultrafilter of an MTV algebra} we have that  $\mathbf{A} \setminus D = \{\zero\}$, where $\mathbf{A}$ is the underlying set of $\A$. Let us consider two elements $u, v \in \VA{\A}$ such that $\VA{\A, ~ \lb \cdot \rb_\pa} \models_D u = v$.  Then, the following two cases, Case I and Case II are sufficient to prove the property of the theorem.

Case I: Suppose there exists an element $x \in \dom(u)$ such that $u(x)= \one$. Then, it is clear that $u(x)^* = \zero$. If there is no $y \in \dom(v)$ such that $v(y)=\one$ and $\lb x = y \rb_\pa \in D$,  we get $\lb x \in v \rb_\pa < \one$ and hence $\lb x \in v \rb_\pa^* \in D$. 
This concludes that, $\big{(}\lb x \in v \rb_\pa^* \Rightarrow u(x)^* \big{)} = \zero$, which contradicts the assumption. 
Similarly, if there exists $y \in \dom(v)$ such that $v(y)= \one$, then there also exists $x \in \dom(u)$ such that $u(x)=\one$ and   $\lb x = y \rb_\pa \in D$, otherwise $\big{(}\lb y \in u \rb_\pa^* \Rightarrow v(y)^* \big{)} = \zero$, and hence our assumption fails.

Case II: Suppose there exists an element $x \in \dom(u)$ such that $u(x) \in D \setminus \{\one\}$. Towards the contradiction, suppose there does not exist any $y \in \dom(v)$ such that $v(y) \in D$ and  $\lb x = y \rb_\pa \in D$, i.e., for every $y \in \dom(v)$ either $v(y) = \zero$ or $\lb x = y \rb_\pa = \zero$. Hence, $\big{(}u(x) \Rightarrow \lb x \in v \rb_\pa \big{)} = \zero$, which implies that $\lb u = v \rb_\pa = \zero$.

Conversely, let the condition hold.  
Consider an arbitrary $x \in \dom(u)$. If $u(x) = \zero$, then immediately we get,
\[(u(x) \Rightarrow  \lb x \in v \rb_\pa) \wedge ( \lb  x \in v\rb_\pa^* \Rightarrow  u(x)^*) \in D.\]
Now, suppose $u(x) \in D$. Then, by our assumption $\lb x \in v \rb_\pa \in D$. Hence, the first conjunct of the algebraic expression of $\lb u = v \rb_\pa$, $(u(x) \Rightarrow  \lb x \in v \rb_\pa) \in D$.
For the second conjunct, we consider two cases: $u(x) \in D \setminus \{\one\}$ and $u(x) = \one$.
If $u(x) \in D \setminus \{\one\}$, then $u(x)^* \in D$ and $\lb x \in v \rb_\pa \in D$ by our assumption. On the other hand, if $u(x) = \one$, then by our assumption, $\lb x \in v \rb_\pa = \one$, so $\lb x \in v \rb_\pa^* = \zero$.  Hence, in both the cases we have
\[(u(x) \Rightarrow  \lb x \in v \rb_\pa) \wedge ( \lb  x \in v\rb_\pa^* \Rightarrow  u(x)^*) \in D.\]
A similar argument shows that, for any $y \in \dom(v)$,
\[(v(y) \Rightarrow  \lb y \in u \rb_\pa) \wedge ( \lb y \in  u \rb_\pa^* \Rightarrow  v(y)^*) \in D.\]
This leads to the fact that, $\VA{\mathbb{A}, ~ \lb \cdot \rb_\pa} \models_D u = v$.
\end{proof}

We now prove in Observation \ref{obs: difference in equality} and Observation \ref{obs: difference between belongingness} that the assignment function $\lb \cdot \rb_\pa$ is finer than the assignment function $\lb \cdot \rb_\ba$ in validating the equality and set-membership relation.

\begin{observation}\label{obs: difference in equality}
Let $(\mathbb{A},D)$ be an ultra-designated $\mathsf{Cobounded}$-algebra and $u, v \in \VA{\A}$ be any two elements. If  $\VA{\A, ~ \lb \cdot \rb_\pa} \models_D u = v$, then  $\VA{\A, ~ \lb \cdot \rb_\ba} \models_D u = v$. But the converse may not be true.
\end{observation}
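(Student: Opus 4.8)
The statement splits into a positive part (the implication) and a negative part (the failure of the converse), and I would handle them separately. For the implication, the plan is to prove the stronger, purely algebraic fact that $\lb u = v\rb_\pa \leq \lb u = v\rb_\ba$ holds in $\A$ for \emph{all} $u,v \in \VA{\A}$; once this is in hand, the conclusion is immediate, because $D$ is a filter and hence upward closed, so $\lb u = v\rb_\pa \in D$ forces $\lb u = v\rb_\ba \in D$. I would establish the inequality by the meta-induction principle on $u$, carrying the inductive hypothesis that $\lb p = q\rb_\pa \leq \lb p = q\rb_\ba$ for every $p \in \dom(u)$ and every $q \in \VA{\A}$. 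A preliminary remark to record is that both interpretations of $=$ are symmetric, i.e. $\lb u = v\rb_X = \lb v = u\rb_X$ for $X \in \{\ba,\pa\}$, since swapping $u$ and $v$ merely permutes the two outer conjunctions; in particular the inductive hypothesis also yields $\lb q = p\rb_\pa \leq \lb q = p\rb_\ba$ for $p \in \dom(u)$. (Alternatively, one could run the argument through the explicit characterization of $\models_D u = v$ in Theorem~\ref{theorem: equality corresponding to paraconsistent assignment function} together with an analogous description of $\lb\cdot\rb_\ba$; but the inequality route seems cleaner and gives a bit more.)

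The induction step is a conjunct-by-conjunct comparison. Since $\lb u=v\rb_\pa$ is a meet that contains, besides the conjuncts $u(x) \Rightarrow \lb x\in v\rb_\pa$ and $v(y) \Rightarrow \lb y\in u\rb_\pa$, also the extra contrapositive conjuncts, discarding the latter only lowers the value, so $\lb u=v\rb_\pa \leq \bigwedge_{x\in\dom(u)}(u(x)\Rightarrow\lb x\in v\rb_\pa) \wedge \bigwedge_{y\in\dom(v)}(v(y)\Rightarrow\lb y\in u\rb_\pa)$. It therefore suffices to show $u(x)\Rightarrow\lb x\in v\rb_\pa \leq u(x)\Rightarrow\lb x\in v\rb_\ba$ and $v(y)\Rightarrow\lb y\in u\rb_\pa \leq v(y)\Rightarrow\lb y\in u\rb_\ba$, and by property \textbf{P2} (monotonicity of $\Rightarrow$ in the second argument, available since every $\co$-algebra is a deductive reasonable implication algebra) it is enough to prove $\lb x\in v\rb_\pa \leq \lb x\in v\rb_\ba$ and $\lb y\in u\rb_\pa \leq \lb y\in u\rb_\ba$. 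Expanding, $\lb x\in v\rb_\pa = \bigvee_{y\in\dom(v)}(v(y)\wedge\lb y=x\rb_\pa)$ and $\lb y\in u\rb_\pa = \bigvee_{x\in\dom(u)}(u(x)\wedge\lb x=y\rb_\pa)$; in both sums every atomic equality has an argument in $\dom(u)$ (namely $x$), so the inductive hypothesis applies termwise, and monotonicity of $\wedge$ and of arbitrary joins gives the two displayed inequalities. Reassembling, $\lb u=v\rb_\pa \leq \lb u=v\rb_\ba$, which closes the induction. The one point that needs care — and the only real obstacle — is this bookkeeping: the induction is driven by $\dom(u)$, and one must verify that every atomic equality surfacing in the recursion genuinely has an endpoint in $\dom(u)$; it is precisely the symmetry of $\lb\cdot = \cdot\rb$ that makes this go through, since it lets one rewrite $\lb y = x\rb$ as $\lb x = y\rb$ with $x \in \dom(u)$.

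For the converse, I would exhibit an explicit counterexample. Take any ultra-designated $\co$-algebra with more than two elements — for instance $\ps$ — fix $a$ with $\zero < a < \one$, let $w$ be the name $\emptyset$ (the empty function, whose domain is empty, so $\lb w = w\rb_\ba = \one$), and set $u = \{(w,a)\}$ and $v = \{(w,\one)\}$. A direct computation gives $\lb w\in v\rb_\ba = v(w)\wedge\lb w=w\rb_\ba = \one$ and $\lb w\in u\rb_\ba = u(w)\wedge\lb w=w\rb_\ba = a$, hence $\lb u=v\rb_\ba = (a \Rightarrow \one)\wedge(\one\Rightarrow a) = \one\wedge\one = \one \in D$, while $\lb u=v\rb_\pa = \zero \notin D$ by the computation in Observation~\ref{observation: failure of Extensionality in PA}. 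Thus $\VA{\A,\lb\cdot\rb_\ba}\models_D u=v$ but $\VA{\A,\lb\cdot\rb_\pa}\not\models_D u=v$, so the converse fails whenever $\A$ is non-trivial. (This is consistent with Lemma~\ref{lemma: value of u = v are same under PA and BA} and Theorem~\ref{theorem: importance of PA-assignment}, which show that on two-element — i.e. Boolean — algebras the two assignment functions agree; so the negative part genuinely requires a size assumption on $\A$.)
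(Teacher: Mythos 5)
Your proposal is correct, and the counterexample for the failure of the converse is essentially the one the paper uses (the pair $u=\{\langle w,\one\rangle\}$, $v=\{\langle w,a\rangle\}$ with $\zero<a<\one$, up to swapping the roles of $u$ and $v$). For the positive direction, however, you take a genuinely different route. The paper argues through characterizations of validity: it invokes Theorem~\ref{theorem: equality corresponding to paraconsistent assignment function} for $\lb\cdot\rb_\pa$ together with an analogous (asserted, not proved) combinatorial description of when $\lb u=v\rb_\ba\in D$, and matches witnesses; as written this still leaves implicit the inductive step needed to pass from $\lb x=y\rb_\pa\in D$ to $\lb x=y\rb_\ba\in D$ for the domain elements. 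You instead prove the stronger order-theoretic fact $\lb u=v\rb_\pa\leq\lb u=v\rb_\ba$ by an explicit meta-induction, using only that the $\pa$-value is a sub-meet of extra conjuncts, property \textbf{P2}, symmetry of the equality clauses, and monotonicity of meets and joins. This buys you something: the argument makes the induction explicit, does not use the ultrafilter hypothesis or Theorem~\ref{theorem: equality corresponding to paraconsistent assignment function} at all, and therefore yields the implication for \emph{any} designated $\co$-algebra (indeed for any deductive reasonable implication algebra with a $^*$), with the observation following immediately from upward closure of $D$. One small wording slip: you say that discarding the contrapositive conjuncts ``only lowers the value''; removing conjuncts from a meet can only \emph{raise} it, which is exactly why $\lb u=v\rb_\pa$ is $\leq$ the truncated expression --- the displayed inequality is right, only the justifying phrase points the wrong way.
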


\begin{proof}
It is not hard to check that, for any $u, v \in \VA{\A}$, $\lb u = v \rb_\ba \in D$ if and only if the following hold:
if $u(x) \in D$ then there exists $v(y) \in  D$ such that $\lb x = y \rb_\ba \in D$, and vice-versa.
Then, applying Theorem \ref{theorem: equality corresponding to paraconsistent assignment function} we can conclude that, if  $\VA{\A, ~ \lb \cdot \rb_\pa} \models_D u = v$, then  $\VA{\A, ~ \lb \cdot \rb_\ba} \models_D u = v$.

The converse does not always hold:  fix two elements $u, v \in \VA{\A}$ such that, $u=\{\langle\ w, \one \rangle\}$, $v=\{\langle w,a\rangle\}$, where $w \in \VA{\A}$ is an arbitrary element and $a \in \A$ is such that $\zero < a < \one$. 
Then, $\VA{\A, ~ \lb \cdot \rb_\ba} \models_D u = v$, but Theorem \ref{theorem: equality corresponding to paraconsistent assignment function} ensures that, $\VA{\A, ~ \lb \cdot \rb_\pa} \not\models_D u = v$.
\end{proof}

 \begin{observation}\label{obs: difference between belongingness}
 Let $(\mathbb{A},D)$ be an ultra-designated $\mathsf{Cobounded}$-algebra and $u, v \in \VA{\A}$ be any two elements. If  $\VA{\A, ~ \lb \cdot \rb_\pa} \models_D u \in v$, then  $\VA{\A, ~ \lb \cdot \rb_\ba} \models_D u \in v$. But the converse may not be true. 

\end{observation}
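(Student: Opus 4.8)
The plan is to mirror the structure of the proof of Observation \ref{obs: difference in equality}, reducing the statement about set-membership to the already-established statement about equality. First I would recall the definitions of the two interpretations of $\in$: for any $u,v \in \VA{\A}$,
\[
\lb u \in v \rb_\pa = \bigvee_{y \in \dom(v)} \big{(} v(y) \wedge \lb y = u \rb_\pa \big{)} \quad \text{and} \quad \lb u \in v \rb_\ba = \bigvee_{y \in \dom(v)} \big{(} v(y) \wedge \lb y = u \rb_\ba \big{)}.
\]
Since $(\A, D)$ is an ultra-designated $\mathsf{Cobounded}$-algebra, Lemma \ref{lemma: ultrafilter of an MTV algebra} gives $\mathbf{A} \setminus D = \{\zero\}$, so an element lies in $D$ precisely when it is non-zero, and property (ii) of a $\co$-algebra tells us that a join lies in $D$ (i.e.\ equals a non-zero element) iff one of its disjuncts does. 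Hence $\lb u \in v \rb_\pa \in D$ iff there is some $y \in \dom(v)$ with $v(y) \in D$ and $\lb y = u \rb_\pa \in D$; likewise $\lb u \in v \rb_\ba \in D$ iff there is some $y \in \dom(v)$ with $v(y) \in D$ and $\lb y = u \rb_\ba \in D$.

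With this reformulation in hand, the forward implication is immediate: if $\VA{\A, ~ \lb \cdot \rb_\pa} \models_D u \in v$, pick the witnessing $y \in \dom(v)$ with $v(y) \in D$ and $\lb y = u \rb_\pa \in D$, which by definition means $\VA{\A, ~ \lb \cdot \rb_\pa} \models_D y = u$. Then Observation \ref{obs: difference in equality} yields $\VA{\A, ~ \lb \cdot \rb_\ba} \models_D y = u$, i.e.\ $\lb y = u \rb_\ba \in D$, and the same $y$ now witnesses $\lb u \in v \rb_\ba \in D$, so $\VA{\A, ~ \lb \cdot \rb_\ba} \models_D u \in v$.

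For the failure of the converse, I would reuse the counterexample from Observation \ref{obs: difference in equality}: take $w \in \VA{\A}$ arbitrary and $a \in \A$ with $\zero < a < \one$, and set $v = \{\langle w, \one \rangle\}$ (a $v$ whose unique domain element $w$ has value $\one \in D$) together with $u = \{\langle w, a \rangle\}$. Then $\lb u = w \rb_\ba \in D$ while $\lb u = w \rb_\pa \notin D$ by Theorem \ref{theorem: equality corresponding to paraconsistent assignment function} (as in Observation \ref{obs: difference in equality}), so $v(w) = \one \in D$ together with $\lb w = u \rb_\ba \in D$ gives $\VA{\A, ~ \lb \cdot \rb_\ba} \models_D u \in v$, whereas the only candidate witness $w$ fails to satisfy $\lb w = u \rb_\pa \in D$, so $\VA{\A, ~ \lb \cdot \rb_\pa} \not\models_D u \in v$. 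The only mild subtlety — and the step I would be most careful about — is the reformulation of membership-in-$D$ as existence of a witnessing disjunct in $D$, since this uses both the characterization $\mathbf{A} \setminus D = \{\zero\}$ from the ultrafilter hypothesis and the defining property of $\co$-algebras that distributes membership in $D$ over arbitrary joins; everything else is a direct transfer through Observation \ref{obs: difference in equality}.
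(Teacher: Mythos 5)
Your forward direction is correct and is exactly the paper's argument: extract a witness $t\in\dom(v)$ with $v(t)\in D$ and $\lb t=u\rb_\pa\in D$, transfer the equality via Observation \ref{obs: difference in equality}, and reuse the same witness for $\lb u\in v\rb_\ba$. The reformulation of ``the join lies in $D$'' as ``some disjunct lies in $D$'' via Lemma \ref{lemma: ultrafilter of an MTV algebra} is also fine.

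The counterexample for the converse, however, has a genuine error. You set $v=\{\langle w,\one\rangle\}$ and $u=\{\langle w,a\rangle\}$, so that
$\lb u\in v\rb = v(w)\wedge\lb w=u\rb = \one\wedge\lb w=u\rb$, and you then claim $\lb w=u\rb_\ba\in D$ but $\lb w=u\rb_\pa\notin D$ ``as in Observation \ref{obs: difference in equality}.'' But that observation's counterexample concerns the pair $\{\langle w,\one\rangle\}$ versus $\{\langle w,a\rangle\}$ — two \emph{names built over} $w$ — not the pair $w$ versus $\{\langle w,a\rangle\}$. The equality $\lb w=\{\langle w,a\rangle\}\rb_\ba$ compares the arbitrary element $w$ with a set whose only member-candidate is $w$ itself, and it is $\zero$ in general: for instance with $w=\varnothing$ one gets $\lb \varnothing=\{\langle\varnothing,a\rangle\}\rb_\ba=(a\Rightarrow\lb\varnothing\in\varnothing\rb_\ba)=(a\Rightarrow\zero)=\zero$, so your $\lb u\in v\rb_\ba$ is $\zero$ as well and nothing is witnessed. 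The fix is to add one more level, as the paper does: keep $u=\{\langle w,\one\rangle\}$ and $t=\{\langle w,a\rangle\}$ (the pair from Observation \ref{obs: difference in equality}), and take $v=\{\langle t,\one\rangle\}$. Then $\lb u\in v\rb=v(t)\wedge\lb t=u\rb=\lb t=u\rb$, which lies in $D$ under $\lb\cdot\rb_\ba$ but equals $\zero$ under $\lb\cdot\rb_\pa$, giving the desired separation.
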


\begin{proof}
Let $\VA{\A, ~ \lb \cdot \rb_\pa} \models_D u \in v$ holds, for two elements $u, v \in \VA{\A}$. Then, there exists $t \in \dom(v)$ such that $v(t) \in D$ and $\lb u = t \rb_\pa \in D$. By Observation \ref{obs: difference in equality}, it can be concluded that $\lb u = t \rb_\ba \in D$ as well. Hence we have $\VA{\A, ~ \lb \cdot \rb_\ba} \models_D u \in v$.

 The converse is not always true, as the following example shows.
 Let us first consider two elements $u=\{\langle w, \one \rangle\}$ and $t =\{\langle w,a\rangle\}$, where $w \in \VA{\A}$ and $a \in \A$ is an arbitrary element such that  $\zero < a < \one$. Then, take the element
 $v = \{\langle t, \one \rangle\}$ in $\VA{\A}$. As  already observed in the proof of Observation \ref{obs: difference in equality} we have that $\lb u = t \rb_\ba \in D$, but $\lb u = t \rb_\pa \notin D$. Hence we get, $\VA{\A, ~ \lb \cdot \rb_\ba} \models_D u \in v$, however $\VA{\A, ~ \lb \cdot \rb_\pa} \not\models_D u \in v$.
\end{proof}

The following set-theoretic properties displayed in Lemma \ref{PropertiesTvaluedmodels},  hold for both,  Boolean and Heyting-valued models, equipped with the assignment function $\lb \cdot \rb_\ba$.

\begin{lemma} \label{PropertiesTvaluedmodels}   
Let $(\A,D)$ be an ultra-designated $\mathsf{Cobounded}$-algebra.  Then,
for any  $u, v, w \in \V^{\mathbb{(A)}}$ the following hold:

\begin{enumerate}[label=(\roman*)]

\item $\lb u=u \rb_\pa \in D $,

\item for any $x \in \dom(u)$, if $ u(x) \in D$, then $\lb x \in u \rb_\pa \in D$, 

\item if $\lb u=v \rb_\pa \wedge \lb v = w \rb_\pa \in D$, then $\lb u=w \rb_\pa \in D$, 
    
\item if $\lb u=v \rb_\pa \wedge \lb v \in w \rb_\pa \in D$, then $\lb u \in w \rb_\pa \in D$,
    
\item if $\lb u=v \rb_\pa \wedge \lb w \in v \rb_\pa \in D$, then $\lb w \in u \rb_\pa \in D$.
\end{enumerate}
\end{lemma}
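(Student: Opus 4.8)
The plan is to reduce everything to two structural facts and then chase witnesses through the combinatorial characterization of equality in Theorem \ref{theorem: equality corresponding to paraconsistent assignment function}. Since $(\A,D)$ is ultra-designated, Lemma \ref{lemma: ultrafilter of an MTV algebra} gives $\mathbf{A}\setminus D=\{\zero\}$, so $a\in D$ iff $a\neq\zero$; in particular a join $\bigvee_i a_i$ lies in $D$ exactly when some $a_i\in D$. Also $\lb u=v\rb_\pa\in\{\one,\zero\}$ always, and the defining expression for $\lb u=v\rb_\pa$ is symmetric in $u,v$, so $\lb u=v\rb_\pa=\lb v=u\rb_\pa$ and ``$\lb u=v\rb_\pa\in D$'' just means ``$\lb u=v\rb_\pa=\one$''. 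I will use freely that $D$ is a filter (so $a\wedge b\in D$ iff $a,b\in D$) and is upward closed. For (i) I would argue by meta-induction on $u$: assuming $\lb x=x\rb_\pa\in D$ for every $x\in\dom(u)$, the conditions of Theorem \ref{theorem: equality corresponding to paraconsistent assignment function} for the pair $(u,u)$ are met by taking the witness $y:=x$ for each $x\in\dom(u)$. Part (ii) is then immediate from $\lb x\in u\rb_\pa=\bigvee_{z\in\dom(u)}(u(z)\wedge\lb z=x\rb_\pa)\geq u(x)\wedge\lb x=x\rb_\pa=u(x)$ together with upward closure.

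The heart of the lemma is (iii), which I would prove by meta-induction on $u$ with induction property $\Phi(u)$: ``for all $v,w$, if $\lb u=v\rb_\pa\wedge\lb v=w\rb_\pa\in D$ then $\lb u=w\rb_\pa\in D$''. Given $\Phi(x)$ for all $x\in\dom(u)$ and $v,w$ with $\lb u=v\rb_\pa,\lb v=w\rb_\pa\in D$, I verify the clauses of Theorem \ref{theorem: equality corresponding to paraconsistent assignment function} for $(u,w)$ by a two-step chase: from $x\in\dom(u)$ with $u(x)\in D$, the characterization applied to $\lb u=v\rb_\pa$ yields $y\in\dom(v)$ with $v(y)\in D$ and $\lb x=y\rb_\pa\in D$, and applied to $\lb v=w\rb_\pa$ yields $z\in\dom(w)$ with $w(z)\in D$ and $\lb y=z\rb_\pa\in D$; since $\lb x=y\rb_\pa\wedge\lb y=z\rb_\pa\in D$, $\Phi(x)$ gives $\lb x=z\rb_\pa\in D$, the required witness. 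The ``vice-versa'' clauses run the same chain backwards from $\dom(w)$ to $\dom(u)$, and the ``moreover'' sub-clauses for values equal to $\one$ are handled in parallel. This is the step I expect to be the main obstacle: one must invoke $\Phi$ only on elements of $\dom(u)$ (which the chain does, since the composed equalities always sit at $x\in\dom(u)$) and keep the ``$\in D$'' and ``$=\one$'' sub-clauses synchronized---harmless here only because $\lb\cdot=\cdot\rb_\pa$ takes just the two values $\one$ and $\zero$.

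Parts (iv) and (v) then follow by combining extraction-from-a-join with (iii). For (iv): from $\lb v\in w\rb_\pa\in D$ I extract $z\in\dom(w)$ with $w(z)\in D$ and $\lb z=v\rb_\pa\in D$; then $\lb u=v\rb_\pa\wedge\lb v=z\rb_\pa\in D$, so (iii) gives $\lb u=z\rb_\pa\in D$, hence $\lb u\in w\rb_\pa\geq w(z)\wedge\lb z=u\rb_\pa=w(z)\in D$. For (v): from $\lb w\in v\rb_\pa\in D$ I extract $y\in\dom(v)$ with $v(y)\in D$ and $\lb y=w\rb_\pa\in D$; since $\lb u=v\rb_\pa\in D$, the ``vice-versa'' clause of Theorem \ref{theorem: equality corresponding to paraconsistent assignment function} supplies $x\in\dom(u)$ with $u(x)\in D$ and $\lb x=y\rb_\pa\in D$; then (iii) gives $\lb x=w\rb_\pa\in D$, so $\lb w\in u\rb_\pa\geq u(x)\wedge\lb x=w\rb_\pa\in D$. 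In both cases upward closure of $D$ completes the argument.
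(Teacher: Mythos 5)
Your proof is correct and follows essentially the same route as the paper's: everything is reduced to the witness characterization of equality (Theorem \ref{theorem: equality corresponding to paraconsistent assignment function}) together with $\mathbf{A}\setminus D=\{\zero\}$ from Lemma \ref{lemma: ultrafilter of an MTV algebra}, with meta-induction for (i) and (iii), and witness extraction combined with (iii) for (iv)--(v). If anything, your setup of the induction in (iii) --- on $u$, with the transitivity property quantified over all $v,w$ so that $\Phi(x)$ can be invoked at the composed equalities --- is stated more cleanly than the paper's ``induction on the domain of $w$,'' but the underlying argument is the same.
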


\begin{proof} $(i)$ Consider any $x \in \dom(u)$ such that $u(x) \in D$. Let us consider the following two cases: $u(x)= \one$ and $u(x) \in D$.  
In the first case, by Theorem \ref{theorem: equality corresponding to paraconsistent assignment function} we get $\lb x \in u  \rb_{\pa} = \one$. 
Thus:\[(u(x) \Rightarrow  \lb x \in u \rb_\pa) \wedge ( \lb  x \in u\rb_\pa^* \Rightarrow  u(x)^*) \in D.\] 
In the second case, since $u(x) \in D \setminus \{\one\}$ we have $u(x)^* \in D \setminus \{\one\}$ and by Theorem \ref{theorem: equality corresponding to paraconsistent assignment function} we get $\lb x \in u  \rb_{\pa} \in D$. It follows immediately that: \[(u(x) \Rightarrow  \lb x \in u \rb_\pa) \wedge ( \lb  x \in u\rb_\pa^* \Rightarrow  u(x)^*) \in D.\] We may conclude $\lb u=u \rb_{\pa} \in D$ for any $u \in \V^{(\mathbb{A})}$.

\medskip

\noindent $(ii)$  Let $u(x) \in D$, then we have \[\llbracket x \in u \rrbracket_\pa \geq (u(x) \wedge  \llbracket x = x \rrbracket_\pa) \in D,\] since $\llbracket x = x \rrbracket_\pa \in D$ by item $(i)$.

\medskip

\noindent $(iii)$ By induction on the domain of $w$. Assume that for all $z \in \dom(w)$ we have:
\[\llbracket u=v \rb_\pa \wedge \llbracket v=z \rb_\pa \in D \mbox{ implies }  \llbracket u = z \rb_\pa \in D.\] Case I: Let  $x_1,..., x_n \in \dom(u)$ such that  $u(x_i)= \one$, $i= 1,...,n$. Then by Theorem \ref{theorem: equality corresponding to paraconsistent assignment function}, there exist $y_1,...,y_n \in \dom(v)$ such that $v(y_i)= \one$ and  $\lb x_i = y_i \rb_\pa \in D$,  $i= 1,...,n$. Since, $\lb v=w \rb_\pa \in D$, as well, there exist $z_1,...,z_n \in \dom(w)$ such that $w(z_i)= \one$ and  $\lb y_i = z_i \rb_\pa\in D$,  $i= 1,...,n$. Now, by induction hypothesis $\lb x_i = z_i \rb_\pa \in D$, $i= 1,...,n$.
Case II: Let there exist $x_1^{\prime},..., x_k^{\prime} \in \dom(u)$ such that  $u(x_i^{\prime}) \in D \setminus \{\one\}$, $i= 1,...,k$. Then by Theorem \ref{theorem: equality corresponding to paraconsistent assignment function}, there exist $y_1^{\prime},...,y_k^{\prime} \in \dom(v)$ such that $v(y_i^{\prime})\in D \setminus \{\one\}$ 
and  $\lb x_i^{\prime} = y_i^{\prime} \rb_\pa \in D$,  $i= 1,...,n$.  Since, $\lb v=w \rb_\pa \in D$, as well, there exist $z_1^{\prime},...,z_k^{\prime} \in \dom(w)$ such that $w(z_i^{\prime})\in D \setminus \{\one\}$ 
and  $\lb y_i^{\prime} = z_i^{\prime} \rb_\pa \in D$, and moreover if $v(y_i^{\prime})= \one$, then $w( z_i^{\prime})= \one$, $i= 1,...,n$. 
By induction hypothesis,  $\lb x_i^{\prime}  = z_i^{\prime}  \rb_\pa \in D$, $i= 1,...,n$.  We can prove the vice-versa similarly. 
\medskip

\noindent $(iv)$ Assume that $\lb u = v \rb_\pa \wedge \lb u \in w \rb_\pa \in D$. 
\begin{align*}\lb u=v \rb_\pa \wedge \llbracket u \in w \rb_\pa &= 
\lb u=v \rb_\pa \wedge  \bigvee_{z \in \dom(w)} (w(z) \wedge \llbracket z=u \rrbracket_\pa)\\
&= \bigvee_{z \in \dom(w)} w(z) \wedge (\llbracket z=u \rrbracket_\pa \wedge \llbracket u = v \rb_\pa)
\end{align*}
Hence, $\lb z=u \rb_\pa \wedge \lb u = v \rb_\pa \in D$ which implies by item $(iii)$ that $\lb  z = v \rb_\pa \in D$. Therefore, \[ \bigvee_{z \in \dom(w)} \big{(} w(z) \wedge \lb  z = v \rb_\pa \big{)} = \llbracket v \in w \rrbracket_\pa \in D.\] 

\medskip

\noindent$(v)$ Suppose, both $\llbracket u=v \rrbracket_\pa \in D$ and $\llbracket w \in v \rrbracket_\pa \in D$ hold. Then, $\llbracket w \in v \rrbracket_\pa \in D$ implies that 
there exists $x_0 \in \dom(v)$ so that $v(x_0) \in D$ and $\lb x_0 = w \rb_\pa \in D$. 
Since $\lb u = v \rb_\pa \in D$, using Theorem \ref{theorem: equality corresponding to paraconsistent assignment function}, we can ensure that there exists $y_0 \in \dom(u)$ such that $u(y_0) \in D$ and $\lb x_0 = y_0 \rb_\pa \in D$. 
By applying $(iii)$ we can see that $\lb y_0 = w \rb_\pa \in D$. 
Hence, $u(y_0) \wedge \lb y_0 = w \rb_\pa \in D$, which proves that $\lb w \in u \rb_\pa \in D$.
\end{proof}

\begin{lemma}\label{lemma: classification of designated values by ass. function}
Let $(\A,D)$ be an ultra-designated $\mathsf{Cobounded}$-algebra. For any $u, v \in \VA{\A}$ and any formula $\varphi(x)$ in $\mathcal{L}_\A$ having one free variable $x$, if $\lb u = v \rb_\pa \in D$ then the following hold:
\begin{enumerate}
    \item[(i)] if $\lb \varphi(u) \rb_\pa = \one$ then $\lb \varphi(v) \rb_\pa = \one$,
    \item[(ii)] if $\zero < \lb \varphi(u) \rb_\pa < \one$ then $\zero < \lb \varphi(v) \rb_\pa < \one$.
\end{enumerate}
\end{lemma}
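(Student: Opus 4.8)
The plan is to prove, by induction on the complexity of $\varphi$, a single statement subsuming both (i) and (ii): \emph{if $\lb u=v\rb_\pa\in D$, then $\lb\varphi(u)\rb_\pa$ and $\lb\varphi(v)\rb_\pa$ lie in the same one of the three sets $\{\one\}$, $D\setminus\{\one\}$, $\A\setminus D$}. Since $(\A,D)$ is ultra-designated, Lemma~\ref{lemma: ultrafilter of an MTV algebra} gives $\A\setminus D=\{\zero\}$, so these three sets partition $\A$; hence the statement is precisely (i), (ii), and the then-automatic clause that $\lb\varphi(u)\rb_\pa=\zero$ implies $\lb\varphi(v)\rb_\pa=\zero$. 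I will call these three sets \emph{tiers}. Throughout I use that $\lb x=y\rb_\pa\in\{\one,\zero\}$ for all names $x,y$, so the hypothesis $\lb u=v\rb_\pa\in D$ is equivalent to $\lb u=v\rb_\pa=\one$, and that $=$ is interpreted symmetrically. The induction runs over formulas in the single free variable $x$ but with arbitrary names as parameters, so that the quantifier step can invoke the inductive hypothesis on $\psi(t,x)$ for each name $t$.

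For the base case I would go through the atomic formulas in which $x$ occurs: $x=x$, $x=w$, $w=x$, $x\in w$, $w\in x$, $x\in x$ (atomic formulas not containing $x$ being trivial). The identity cases follow from Lemma~\ref{PropertiesTvaluedmodels}: from $\lb u=v\rb_\pa=\one$, transitivity (item~(iii)) and symmetry give $\lb u=w\rb_\pa=\lb v=w\rb_\pa$, and these values already lie in $\{\one,\zero\}$. For $x\in w$ one checks, again via item~(iii) and symmetry, that $\{t\in\dom(w):\lb t=u\rb_\pa=\one\}=\{t\in\dom(w):\lb t=v\rb_\pa=\one\}$; since $\lb t=u\rb_\pa\in\{\one,\zero\}$ this forces $\lb u\in w\rb_\pa=\lb v\in w\rb_\pa$. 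The delicate cases are $w\in x$ and $x\in x$. For these, items~(iv) and~(v) of Lemma~\ref{PropertiesTvaluedmodels} transfer membership in $D$ in both directions (for $x\in x$ one substitutes first in the left slot using~(iv), then in the right slot using~(v)), while the top tier is handled separately via the additional clause of Theorem~\ref{theorem: equality corresponding to paraconsistent assignment function} concerning entries of value~$\one$, together with the $\co$-property that a join equals $\one$ only when one of its summands does. Having transferred both membership in $D$ and the value $\one$ in both directions, one concludes that $\lb\varphi(u)\rb_\pa$ and $\lb\varphi(v)\rb_\pa$ lie in the same tier.

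For the inductive step it suffices to check that each algebraic operation interpreting a connective or a quantifier respects the partition into tiers. For $\neg$, the operation $^*$ permutes the three tiers, swapping $\{\one\}$ with $\A\setminus D=\{\zero\}$ and fixing $D\setminus\{\one\}$. For $\wedge$ and the universal quantifier, a meet equals $\zero$ only if some conjunct does (by the $\co$-condition on meets, using $\A\setminus D=\{\zero\}$), equals $\one$ only if every conjunct does, and otherwise --- all conjuncts in $D$ and at least one in $D\setminus\{\one\}$ --- lies in $D\setminus\{\one\}$; so the tier of the meet is determined by the tiers of the conjuncts and the inductive hypothesis closes the case. The cases of $\vee$ and the existential quantifier are dual, via the $\co$-condition on joins. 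Finally $\to$ is interpreted by $\Rightarrow$, which is $\{\one,\zero\}$-valued and returns $\zero$ exactly when the antecedent's value lies in $D$ and the consequent's value is $\zero$ --- both tier-conditions --- so the inductive hypothesis applies once more.

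I expect the main obstacle to be exactly the atomic cases $w\in x$ and $x\in x$, where one must combine three earlier results --- the characterization of $\lb u=v\rb_\pa=\one$ in Theorem~\ref{theorem: equality corresponding to paraconsistent assignment function} (crucially the clause controlling the entries of value~$\one$, which governs the top tier), the substitution properties~(iv)--(v) of Lemma~\ref{PropertiesTvaluedmodels}, and the $\co$-axiom on joins --- and verify that every sub-argument goes through symmetrically in $u$ and $v$. After the base case, the inductive step is a routine, if slightly tedious, verification that the six interpreting operations respect tiers.
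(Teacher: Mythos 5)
Your proposal is correct and follows essentially the same route as the paper's proof: induction on the complexity of $\varphi$, with the atomic cases settled by Theorem~\ref{theorem: equality corresponding to paraconsistent assignment function} and Lemma~\ref{PropertiesTvaluedmodels}, and the connective/quantifier steps settled by the $\co$-conditions on infinite joins and meets together with the $\{\one,\zero\}$-valuedness of $\Rightarrow$ and the definition of $^*$. Your explicit three-tier formulation (using $\mathbf{A}\setminus D=\{\zero\}$ from Lemma~\ref{lemma: ultrafilter of an MTV algebra}) is a clean packaging of (i), (ii), and their symmetric converses, and your treatment of the atomic formulas $x=x$ and $x\in x$ covers two base cases the paper's write-up passes over silently.
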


\begin{proof} We divide the base case into the following three cases: \textbf{BS I}, \textbf{BS II}, and \textbf{BS III}.

\medskip

\paragraph{BS I:}
Let $\varphi(x):= w = x$, where $w \in \VA{\A}$. If $\lb \varphi(u) \rb_\pa = \one$, then by Theorem \ref{PropertiesTvaluedmodels}$(iii)$ we have that $\lb \varphi(v) \rb_\pa = \one$, as for any $p, q \in \VA{\A}$, either $\lb p = q \rb_\pa = \one$ or $\lb p = q \rb_\pa = \zero$.

Condition $(ii)$ is vacuously true in this case.

\medskip

\paragraph{BS II:} Let $\varphi(x):= w \in x$, where $w \in \VA{\A}$.
Suppose $\lb \varphi(u) \rb_\pa = \one$. 
Then, there exists $p \in \dom(u)$ such that $u(p) = \one$ and $\lb p = w \rb_\pa = \one$. Since $\lb u = v \rb_\pa \in D$, by Theorem \ref{theorem: equality corresponding to paraconsistent assignment function}, there exists $q \in \dom(v)$ satisfying $v(q) = \one$ and $\lb p = q \rb_\pa \in D$. 
By Theorem \ref{PropertiesTvaluedmodels}$(iii)$, $\lb q = w \rb_\pa \in D$, i.e., $\lb q = w \rb_\pa = \one$. Hence $\lb \varphi(v) \rb_\pa = \one$.

Suppose $\zero < \lb \varphi(u) \rb_\pa < \one$, i.e., 

\[\zero<\bigvee_{x \in \dom(u)} (u(x) \wedge \lb x=w \rb_\pa )< \one. \] This can only be the case if;

\begin{enumerate}
    \item There exists $x_1 \in \dom(u)$ such that $\zero < u(x_1) <\one$ and $\lb x_1 = w \rb_\pa = \one$. 
    \item For any $x \in \dom(u)$, if $u(x)= \one$ then $\lb x=w \rb_\pa= \zero$.
\end{enumerate}
\noindent Now, \[\lb w \in v \rb= \bigvee_{y \in \dom(v)}(v(y) \wedge \lb y=w \rb_\pa).\] Since $\lb u=v \rb_\pa= \one$ and (1) holds, by Theorem \ref{theorem: equality corresponding to paraconsistent assignment function} we can say that there exists a $y_1 \in \dom(v)$ such that $v(y_1) \in D$ and $\lb y_1 =x_1 \rb_\pa = \one$. Thus by using Lemma \ref{PropertiesTvaluedmodels}(iii), $\lb w \in v \rb_\pa \in D$, i.e., $\zero<\lb w \in v \rb_\pa$. 
We shall now prove that $\lb w \in v \rb_\pa <\one$. Suppose otherwise, then by Definition \ref{Definition: MTV algebra}(iii) there exists $y_1 \in \dom(v)$ such that $v(y_1)= \one = \lb y_1=w \rb_\pa$. 
Since $\lb u=v \rb_\pa= \one$ by Theorem \ref{theorem: equality corresponding to paraconsistent assignment function} there exists a $x_1 \in \dom(u)$ such that $u(x_1)= \one = \lb x_1 = y_1 \rb_\pa$. 
By Lemma \ref{PropertiesTvaluedmodels}$(iii)$, we have also $\lb x_1 =  w \rb_\pa = \one$.  This contradicts that $0<\lb w \in u \rb_\pa< \one$. Hence we get, $\zero <\lb w \in v \rb < \one$.

\medskip

\paragraph{BS III:}
Let $\varphi(x):= x \in w$, where $w \in \VA{\A}$. Suppose $\lb \varphi(u) \rb_\pa = \one$, i.e., \[\bigvee_{z \in \dom(w)} (w(z) \wedge \lb u=z \rb_\pa ) = \one. \] So there exists a $z_1 \in \dom(w)$ such that $w(z_1)= \one = \lb z_1 = u \rb_\pa$. Thus, we have that $\lb u=v \rb_\pa \wedge \lb z_1 = u \rb_\pa \in D$ and  by Lemma \ref{PropertiesTvaluedmodels}(iii), $\lb z_1 = v \rb_\pa \in D$. So there exists a $z_1 \in \dom(w)$ such that $w(z_1)= \one = \lb z_1 =v \rb_\pa$, i.e., $\lb v \in w \rb_\pa = \one$.

Suppose $ \zero < \lb \varphi(u) \rb_\pa < \one$, i.e., 
\[\zero <  \bigvee_{z \in \dom(w)} (w(z) \wedge \lb u=z \rb_\pa) < \one. \] This can only be the case if;
\begin{enumerate}
    \item There exists $z_1 \in \dom(w)$ such that $\zero < w(z_1) <  \one $ and $\lb z_1 = u \rb_\pa = \one$. 
    \item For any $z \in \dom(w)$, if $w(z)= \one$ then $\lb z=u \rb_\pa = \zero$.
\end{enumerate}   By Lemma \ref{PropertiesTvaluedmodels}(iii) we have $\lb z_1 =v \rb_\pa \in D$. So there exists a $z_1 \in \dom(w)$ such that $\zero < w(z_1) <  \one  \mbox{ and } \lb z_1 =v \rb_\pa \in D$, i.e., $\lb v \in w \rb_\pa \in D$. We shall now prove that we have $\lb v \in w \rb_\pa <\one$. Suppose otherwise, so there exists a $z_2 \in \dom(w)$ such that $w(z_2)= \one = \lb z_2=v \rb_\pa$.
Since $\lb u=v \rb_\pa= \one$ by Lemma \ref{PropertiesTvaluedmodels}$(iii)$, we have $\lb z_2 =  u \rb_\pa = \one$. So there exists a  $z \in \dom(w)$ such that $w(z)= \one$ and $\lb z=u \rb_\pa = \one$. This contradicts that $\lb u \in w \rb_\pa < \one$. Hence we get, $\zero <\lb v \in w \rb_\pa < \one$.

\medskip
\paragraph{Induction hypothesis}
Suppose the lemma holds for any formula having the complexity less than $n$, where $n \in \omega \setminus \{\zero\}$. 

\medskip
\paragraph{Induction step} Consider a formula $\varphi(x)$ with one free variable $x$, having complexity $n$.

\paragraph{Case I:} Let $\varphi(x) := \psi(x) \wedge \gamma(x)$. If $\lb \varphi(u) \rb_\pa = \one$, then  \[\lb \psi(u) \rb_\pa= \one=\lb \gamma(u) \rb_\pa. \]  By the induction hypothesis, \[\lb \psi(v) \rb_\pa=\one=\lb \gamma(v) \rb_\pa.\]  Hence, $\lb \varphi(v) \rb_\pa = \one$. 

Now, if $\zero < \lb \varphi(u) \rb_\pa < \one$,  then we have:
\[\zero \neq \lb \psi(u) \rb_\pa \mbox{ and } \zero \neq \lb \gamma(u) \rb_\pa \] and \[ \lb \psi(u) \rb_\pa < \one \mbox{ or } \lb \gamma(u) \rb_\pa < \one.\] By the induction hypothesis; \[\zero \neq \lb \psi(v) \rb_\pa \mbox{ and } \zero \neq \lb \gamma(v) \rb_\pa \] and \[\lb \psi(v) \rb_\pa < \one \mbox{ or }  \lb \gamma(v) \rb_\pa < \one.\] Therefore, $\zero < \lb \varphi(v) \rb_\pa < \one$. Similarly, \textbf{Case II}, \textbf{Case III} and \textbf{Case IV} can  be proved.

\paragraph{Case II:} Let $\varphi(x) := \psi(x) \vee \gamma(x)$.

\paragraph{Case III:} Let $\varphi(x) := \psi(x) \to \gamma(x)$.

\paragraph{Case IV:} Let $\varphi(x) := \lnot ~ \psi(x)$.

\paragraph{Case V:} Let $\varphi(x) := \exists y \, \psi(y, x)$. Suppose $\lb \varphi(u) \rb_\pa = \one$. Follows by  Definition \ref{Definition: MTV algebra}(iii) and the induction hypothesis.  Suppose $\zero < \lb \varphi(u) \rb_\pa < \one$. Then, there exists $p \in \VA{\A}$ such that $\zero < \lb \psi(p, u) \rb_\pa < \one$ and there does not exist any $q \in \VA{\A}$ such that $\lb \psi(q, u) \rb_\pa = \one$. The induction hypothesis ensures that $\zero < \lb \psi(p, v) \rb_\pa < \one$ and $\lb \psi(q, v) \rb_\pa \neq \one$, for all $q \in \VA{\A}$. Finally, by  Definition \ref{Definition: MTV algebra}(iii) it follows that $\zero < \lb \varphi(v) \rb_\pa < \one$.

\paragraph{Case VI:} Let $\varphi(x) := \forall y \, \psi(y, x)$. A dual argument of Case V shows that this case holds due to Definition \ref{Definition: MTV algebra}(iv).
\end{proof}

\begin{coro}\label{corrollary: validity of LL in W+}
Let $(\mathbb{A},D)$ be an ultra-designated $\mathsf{Cobounded}$-algebra. For any $u, v \in \VA{\A}$ and any formula $\varphi(x)$ in $\mathcal{L}_\A$ having one free variable $x$, if $\lb u = v \rb_\pa \wedge \lb \varphi(u) \rb_\pa \in D_{\A}$ then $\lb \varphi(v) \rb_\pa \in D_{\A}$.
\end{coro}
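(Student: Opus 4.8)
The plan is to obtain the statement as a direct consequence of Lemma \ref{lemma: classification of designated values by ass. function}, using the structure of ultra-designated $\co$-algebras recorded in Lemma \ref{lemma: ultrafilter of an MTV algebra}. First I would unpack the hypothesis: since $D$ is a filter, it is upward closed and $\lb u = v \rb_\pa \wedge \lb \varphi(u) \rb_\pa \leq \lb u = v \rb_\pa$ and $\lb u = v \rb_\pa \wedge \lb \varphi(u) \rb_\pa \leq \lb \varphi(u) \rb_\pa$, so from $\lb u = v \rb_\pa \wedge \lb \varphi(u) \rb_\pa \in D$ we get both $\lb u = v \rb_\pa \in D$ and $\lb \varphi(u) \rb_\pa \in D$.

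Next I would use that $(\A,D)$ is ultra-designated: by Lemma \ref{lemma: ultrafilter of an MTV algebra}, $\mathbf{A} \setminus D = \{\zero\}$, so $\lb \varphi(u) \rb_\pa \in D$ is the same as $\lb \varphi(u) \rb_\pa \neq \zero$. This puts us in exactly one of the two cases covered by Lemma \ref{lemma: classification of designated values by ass. function}: either $\lb \varphi(u) \rb_\pa = \one$, or $\zero < \lb \varphi(u) \rb_\pa < \one$. Applying Lemma \ref{lemma: classification of designated values by ass. function} with $\lb u = v \rb_\pa \in D$: in the first case part (i) yields $\lb \varphi(v) \rb_\pa = \one \in D$; in the second case part (ii) yields $\zero < \lb \varphi(v) \rb_\pa < \one$, hence $\lb \varphi(v) \rb_\pa \neq \zero$, so $\lb \varphi(v) \rb_\pa \in D$ again by Lemma \ref{lemma: ultrafilter of an MTV algebra}. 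In both cases $\lb \varphi(v) \rb_\pa \in D$, which is what we wanted.

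There is essentially no real obstacle here, since all the substantive work — the induction on formula complexity, with base cases \textbf{BS I}--\textbf{BS III} settled via Theorem \ref{theorem: equality corresponding to paraconsistent assignment function} and Lemma \ref{PropertiesTvaluedmodels}, and the quantifier steps via Definition \ref{Definition: MTV algebra} — has already been carried out in Lemma \ref{lemma: classification of designated values by ass. function}. The only point requiring a little care is the gap between ``$\in D$'' and ``$= \one$'': the corollary is phrased in terms of membership in the designated set, whereas Lemma \ref{lemma: classification of designated values by ass. function} distinguishes the value $\one$ from strictly intermediate values, so one must invoke ultra-designatedness to collapse ``$\in D$'' to ``$\neq \zero$'' and thereby land precisely in the dichotomy the lemma addresses. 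Without the ultrafilter hypothesis this reduction would be unavailable, which is why the statement is restricted to ultra-designated $\co$-algebras.
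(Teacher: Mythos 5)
Your proof is correct and matches the paper's intended argument: the paper states this corollary as an immediate consequence of Lemma \ref{lemma: classification of designated values by ass. function}, and your derivation—splitting the hypothesis via upward closure of the filter, using $\mathbf{A}\setminus D=\{\zero\}$ from Lemma \ref{lemma: ultrafilter of an MTV algebra} to reduce membership in $D$ to being nonzero, and then applying the two cases of the lemma—is exactly the reasoning being left implicit. Your closing remark about why ultra-designatedness is needed to bridge ``$\in D$'' and the $\one$/intermediate dichotomy is a correct and worthwhile observation.
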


From Corollary \ref{corrollary: validity of LL in W+}, the following theorem can be derived.

\begin{teo}\label{theorem: validity of LL}
Let $(\mathbb{A},D)$ be an ultra-designated $\mathsf{Cobounded}$-algebra. Then,  Leibniz's law of indiscernibility of identicals holds in $\VA{\A,~\lb \cdot \rb_\pa}$, i.e., $\VA{\A,~\lb \cdot \rb_\pa} \models_D \llaw$, for all formulas $\varphi$ in the language $\mathcal{L}_\A$.
\end{teo}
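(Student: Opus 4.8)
The plan is to derive the theorem directly from Corollary \ref{corrollary: validity of LL in W+} by unwinding the definition of validity of the quantified sentence $\llaw$. Fix an arbitrary formula $\varphi$ of $\mathcal{L}_\A$; after substituting names for any free variables of $\varphi$ other than the one displayed, we may treat it as having a single free variable $x$. Using the homomorphic clauses for $\forall$, $\wedge$, and $\to$ under $\lb \cdot \rb_\pa$, we obtain
\[
\lb \llaw \rb_\pa \;=\; \bigwedge_{u \in \VA{\A}}\ \bigwedge_{v \in \VA{\A}} \Big( \big( \lb u = v \rb_\pa \wedge \lb \varphi(u) \rb_\pa \big) \Rightarrow \lb \varphi(v) \rb_\pa \Big),
\]
so it suffices to show that each conjunct on the right equals the top element $\one$.

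Here I would exploit the rigidity of $\Rightarrow$ in a $\co$-algebra: for any $a, b \in \mathbf{A}$, the value $a \Rightarrow b$ equals $\one$ unless $a \neq \zero$ and $b = \zero$, in which case it equals $\zero$. So a conjunct above can fail to be $\one$ only when $\lb u = v \rb_\pa \wedge \lb \varphi(u) \rb_\pa \neq \zero$ while $\lb \varphi(v) \rb_\pa = \zero$. Since $(\A, D)$ is ultra-designated, Lemma \ref{lemma: ultrafilter of an MTV algebra} gives $\mathbf{A} \setminus D = \{\zero\}$; hence $\lb u = v \rb_\pa \wedge \lb \varphi(u) \rb_\pa \neq \zero$ is the same as $\lb u = v \rb_\pa \wedge \lb \varphi(u) \rb_\pa \in D_\A$. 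But then Corollary \ref{corrollary: validity of LL in W+} yields $\lb \varphi(v) \rb_\pa \in D_\A$, so $\lb \varphi(v) \rb_\pa \neq \zero$, contradicting the supposed failure. Therefore every conjunct equals $\one$, the infimum equals $\one \in D$, and $\VA{\A, \lb \cdot \rb_\pa} \models_D \llaw$.

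The substantive content — propagating both the designated value $\one$ and the intermediate values along valid identities — has already been established in Lemma \ref{lemma: classification of designated values by ass. function} and packaged into Corollary \ref{corrollary: validity of LL in W+}, so I do not expect a real obstacle at this stage. The only points requiring care are routine: checking that replacing names for the extra free variables of $\varphi$ does not disturb the argument (immediate, since the corollary is stated for an arbitrary one-free-variable formula with names ranging over all of $\VA{\A}$), and using that $\Rightarrow$ is two-valued on $\co$-algebras together with $\mathbf{A} \setminus D = \{\zero\}$ to pass from the pointwise statement about pairs $(u,v)$ to the validity of the universally quantified sentence.
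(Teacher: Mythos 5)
Your proof is correct and follows the same route as the paper, which simply states that the theorem is derived from Corollary \ref{corrollary: validity of LL in W+}; you have filled in the routine details (two-valuedness of $\Rightarrow$, $\mathbf{A}\setminus D=\{\zero\}$ from Lemma \ref{lemma: ultrafilter of an MTV algebra}, and the reduction to one free variable via names) exactly as intended.
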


\subsection{Validity of the axioms of $\Z$} In this section, we shall prove $\VA{\A, ~ \lb \cdot \rb_\pa} \models_D \Z$ , for any ultra-designated $\mathsf{Cobounded}$-algebra $(\A,D)$.

\begin{lemma}
For any designated $\mathsf{Cobounded}$-algebra $(\A,D)$ and a subset $\{a_i :i \in I \} \cup \{b\}$ of $\A$ (where $I$ is an index set),
\begin{equation}
    (\bigvee_{i \in I}  a_i )  \Rightarrow b = \bigwedge_{i \in I}  (a_i \Rightarrow b).
    \tag{$\dagger$}
\end{equation}
\end{lemma}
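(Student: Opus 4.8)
The identity $(\dagger)$ is about the implication operator $\Rightarrow$ of a designated $\mathsf{Cobounded}$-algebra, which by Definition~\ref{Definition: MTV algebra}(iv) takes only the values $\one$ and $\zero$: it is $\zero$ exactly when the antecedent is nonzero and the consequent is $\zero$, and $\one$ otherwise. So both sides of $(\dagger)$ lie in $\{\one,\zero\}$, and it suffices to show that the left-hand side equals $\zero$ if and only if the right-hand side equals $\zero$.

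First I would unfold the left-hand side. By the definition of $\Rightarrow$, $\bigl(\bigvee_{i\in I} a_i\bigr)\Rightarrow b=\zero$ iff $b=\zero$ and $\bigvee_{i\in I}a_i\neq\zero$. Here is where property~(iii) of a $\co$-algebra enters: $\bigvee_{i\in I}a_i\neq\zero$ does \emph{not} immediately give a single $a_j\neq\zero$, but I do not need that — I only need the contrapositive direction, namely that $\bigvee_{i\in I}a_i=\zero$ forces every $a_i=\zero$, which is immediate from $a_i\le\bigvee_{i\in I}a_i$. So: LHS $=\zero$ iff $b=\zero$ and there exists $j\in I$ with $a_j\neq\zero$.

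Next I would unfold the right-hand side. The meet $\bigwedge_{i\in I}(a_i\Rightarrow b)$ equals $\zero$ iff, by property~(iii) of the $\co$-algebra (the dual condition, that a meet equal to $\zero$ has a witnessing conjunct equal to $\zero$), there exists $j\in I$ with $a_j\Rightarrow b=\zero$, i.e.\ with $b=\zero$ and $a_j\neq\zero$. Comparing the two conditions, LHS $=\zero$ iff RHS $=\zero$, and since both sides are elements of $\{\one,\zero\}$ this yields $(\dagger)$.

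The only subtle point — and the step to state carefully — is the appeal to property~(iii) of the $\co$-algebra to conclude that a meet equal to $\zero$ has a conjunct equal to $\zero$; this is exactly what that axiom provides, applied to the family $\{a_i\Rightarrow b : i\in I\}$, and it is what makes $(\dagger)$ fail for general distributive lattices. No deep obstacle arises; the proof is a short case analysis driven by the two absorption-style conditions (ii) and (iii) in the definition of a $\co$-algebra together with the explicit two-valued description of $\Rightarrow$.
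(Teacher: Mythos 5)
Your proof is correct and follows essentially the same route as the paper's: both sides take values in $\{\one,\zero\}$, so one checks that each equals $\zero$ under the same condition, namely $b=\zero$ together with some $a_j\neq\zero$. One minor quibble: since each $a_i\Rightarrow b$ already lies in $\{\one,\zero\}$, the step ``$\bigwedge_{i\in I}(a_i\Rightarrow b)=\zero$ iff some conjunct is $\zero$'' holds in any complete bounded lattice, so property (iii) of a $\co$-algebra is sufficient but not actually needed here (and the paper's proof does not invoke it either).
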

\begin{proof}
It is known that, for any two elements $p$ and $q$ of $\A$, $p \Rightarrow q$ is either $\one$ or $\zero$.
\begin{align*}
    (\bigvee_{i \in I}  a_i )  \Rightarrow b = \zero & \mbox{ iff } \bigvee_{i \in I}  a_i \neq 0 \mbox{ and } b = \zero\\
   & \mbox{ iff there exists } j \in I \mbox{ so that } 
    a_j \neq 0 \mbox{ and } b = \zero\\
   & \mbox{ iff } \bigwedge_{i \in I}  (a_i \Rightarrow b) = \zero.
\end{align*}
\end{proof}

\begin{lemma}\label{lemma: Bounded quantification for Wplus valued models} 
For an ultra-designated $\mathsf{Cobounded}$-algebra $(\A,D)$, an element $u \in \VA{\A}$, and a formula $\varphi(x)$ in $\mathcal{L}_\A$, having one free variable $x$,
\begin{equation}
    \lb \forall x \big{(} x \in u  \rightarrow \varphi(x) \big{)} \rb_\pa = \bigwedge_{x \in \dom(u)} \big{(}u(x) \Rightarrow \lb \varphi(x) \rb_\pa\big{)}.
    \tag{$\bq$}
\end{equation} 
\end{lemma}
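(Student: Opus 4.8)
The goal is the ``bounded quantification'' identity $(\bq)$ for the $\lb \cdot \rb_\pa$-assignment function. I would unfold the left-hand side using the homomorphic clauses for $\forall$ and $\to$, together with the interpretation of $\in$ under $\lb \cdot \rb_\pa$, and then reduce everything to the previous lemma $(\dagger)$ and the special structure of a $\co$-algebra (where $a \Rightarrow b \in \{\one,\zero\}$ always). Writing out the left side:
\[
\lb \forall x (x \in u \to \varphi(x)) \rb_\pa
= \bigwedge_{w \in \VA{\A}} \big{(} \lb w \in u \rb_\pa \Rightarrow \lb \varphi(w) \rb_\pa \big{)}
= \bigwedge_{w \in \VA{\A}} \Big{(} \big{(}\bigvee_{x \in \dom(u)} (u(x) \wedge \lb x = w \rb_\pa)\big{)} \Rightarrow \lb \varphi(w) \rb_\pa \Big{)}.
\]
Applying $(\dagger)$ to each inner implication turns this into
$\bigwedge_{w \in \VA{\A}} \bigwedge_{x \in \dom(u)} \big{(} (u(x) \wedge \lb x = w \rb_\pa) \Rightarrow \lb \varphi(w) \rb_\pa \big{)}$, and by \textbf{P4} (or directly, since $\Rightarrow$ is the $\co$-algebra implication) this equals $\bigwedge_{w}\bigwedge_{x \in \dom(u)} \big{(} u(x) \Rightarrow (\lb x = w \rb_\pa \Rightarrow \lb \varphi(w) \rb_\pa) \big{)}$. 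Swapping the order of the two infima, it suffices to show for each fixed $x \in \dom(u)$ that
\[
\bigwedge_{w \in \VA{\A}} \big{(} u(x) \Rightarrow (\lb x = w \rb_\pa \Rightarrow \lb \varphi(w) \rb_\pa) \big{)} = u(x) \Rightarrow \lb \varphi(x) \rb_\pa.
\]

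\textbf{The core inequality.} The direction $\leq$ is immediate by instantiating $w := x$ and using $\lb x = x \rb_\pa \in D$ (Lemma \ref{PropertiesTvaluedmodels}$(i)$), which forces $\lb x = x \rb_\pa \Rightarrow \lb\varphi(x)\rb_\pa \geq \lb\varphi(x)\rb_\pa$; a small case check on whether $u(x)=\zero$ handles the implication. For $\geq$, the only nontrivial case is $u(x) \neq \zero$, and I must show that if $\lb \varphi(x) \rb_\pa \neq \zero$ then for every $w$, $\lb x = w \rb_\pa \Rightarrow \lb \varphi(w) \rb_\pa \neq \zero$ (so that the big conjunction is not dragged to $\zero$), and moreover if $\lb\varphi(x)\rb_\pa = \one$ then the conjunction is $\one$. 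If $\lb x = w \rb_\pa = \zero$ the implication is $\one$; if $\lb x = w \rb_\pa \in D$ then (since values of $=$ are always $\one$ or $\zero$) $\lb x = w \rb_\pa = \one$, and Lemma \ref{lemma: classification of designated values by ass. function} (applied with the roles of $u,v$ there played by $x,w$) gives that $\lb\varphi(x)\rb_\pa$ and $\lb\varphi(w)\rb_\pa$ lie in the same one of the three zones $\{\one\}$, $D\setminus\{\one\}$, $\{\zero\}$; in particular $\lb\varphi(w)\rb_\pa \neq \zero$, so $\lb x = w \rb_\pa \Rightarrow \lb\varphi(w)\rb_\pa = \one$. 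Thus every conjunct is $\one$ when $\lb\varphi(x)\rb_\pa \neq \zero$, giving $\geq$; and when $\lb\varphi(x)\rb_\pa = \zero$ the right-hand side is $u(x) \Rightarrow \zero = \zero$ (as $u(x)\neq\zero$), so $\geq$ is trivial.

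\textbf{Main obstacle.} The delicate point is precisely the use of Lemma \ref{lemma: classification of designated values by ass. function}: one needs that $\lb x = w \rb_\pa \in D$ (equivalently $=\one$) transfers membership in each of the three zones $\{\one\}$, $D\setminus\{\one\}$, $\{\zero\}$ from $\lb\varphi(x)\rb_\pa$ to $\lb\varphi(w)\rb_\pa$. This is exactly what that lemma provides (items $(i)$ and $(ii)$ cover the zones $\{\one\}$ and $D\setminus\{\one\}$, and the $\zero$ zone follows by the symmetric application swapping $x$ and $w$, since $\lb x = w \rb_\pa \in D$ is symmetric by Lemma \ref{PropertiesTvaluedmodels}). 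I would also double-check the degenerate book-keeping: the case $\dom(u) = \emptyset$ makes both sides $\one$, and the $(\dagger)$-step requires noting $\lb w \in u \rb_\pa = \bigvee_{x\in\dom(u)}(\dots)$ is a genuine join over the index set $\dom(u)$, so $(\dagger)$ applies verbatim. Everything else is the homomorphic bookkeeping for $\forall$ and $\to$ plus the $\co$-algebra fact that $\Rightarrow$ takes only the values $\one,\zero$, which makes the associativity manipulation $\textbf{P4}$ and the reindexing of the double meet completely safe.
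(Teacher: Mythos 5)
Your proposal is correct and follows essentially the same route as the paper's proof: unfold the quantifier, distribute $\Rightarrow$ over the join via $(\dagger)$, rearrange with \textbf{P4}, and then use the classification lemma (Lemma \ref{lemma: classification of designated values by ass. function} / Corollary \ref{corrollary: validity of LL in W+}) together with $\lb x = x \rb_\pa = \one$ to collapse the extra infimum over $\VA{\A}$. The only (harmless) difference is in the endgame: the paper first replaces $\lb\varphi(y)\rb_\pa$ by $\lb\varphi(x)\rb_\pa$ inside the double meet and then sandwiches with two inequalities, whereas you fix $x$ and evaluate the inner meet directly by a case analysis on the zone of $\lb\varphi(x)\rb_\pa$.
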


\begin{proof} By the definition of the assignment function $\lb.\rb_\pa$,
\begin{align*}
    \lb \forall x \big{(} x \in u & \to \varphi(x)\big{)} \rb_\pa \\
    & = \bigwedge_{y \in \VA{\A}} \llbracket \big{(} y \in u  \rightarrow  \varphi(y)   \big{)} \rb_\pa\\
    & = \bigwedge_{y \in \mathbf{V}^{\mathbb{(A)}}} \big{(} \bigvee_{x \in \dom(u)} ( u(x) \wedge \lb y = x  \rb_\pa) \Rightarrow \lb \varphi(y) \rb_\pa \big{)}\\
    & = \bigwedge_{y \in \mathbf{V}^{\mathbb{(A)}}} \bigwedge_{x \in \dom(u)}  \big{(} (u(x)\wedge \lb x=y \rb_\pa) \Rightarrow \lb \varphi(y) \rb_\pa \big{)}, \mbox{ by } (\dagger)\\
    & = \bigwedge_{y \in \mathbf{V}^{\mathbb{(A)}}} \bigwedge_{x \in \dom(u)}  \big{(}u(x)\Rightarrow (\lb x=y \rb_\pa \Rightarrow \llbracket \varphi(y) \rb_\pa) \big{)},\mbox{ since }\lb x = y \rb_\pa \in \{\one, \zero\}\\
    & = \bigwedge_{y \in \mathbf{V}^{\mathbb{(A)}}} \bigwedge_{x \in \dom(u)}  \big{(}u(x)\Rightarrow (\lb x=y \rb_\pa \Rightarrow \llbracket \varphi(x) \rb_\pa) \big{)}, \mbox{ by Corollary \ref{corrollary: validity of LL in W+}}\\
    & = \bigwedge_{y \in \mathbf{V}^{\mathbb{(A)}}} \bigwedge_{x \in \dom(u)}  \big{(}(u(x)\wedge \llbracket x=y \rb_\pa) \Rightarrow \llbracket \varphi(x) \rb_\pa \big{)}.
\end{align*}

\noindent   Moreover, it can be proved that, if $a \leq b$ then $b \Rightarrow c \leq a \Rightarrow c$, for any three elements $a, b,c$ in $\A$. This concludes that,
\begin{align*}
    \bigwedge_{x \in \dom(u)} \big{(}u(x) \Rightarrow \lb \varphi(x) \rb_\pa \big{)} 
    & = \bigwedge_{y \in  \mathbf{V}^{\mathbb{(A)}}}  \bigwedge_{x \in \dom(u)} \big{(}u(x)  \Rightarrow \lb \varphi(x) \rb_\pa \big{)} \\
    & \leq \bigwedge_{y \in  \mathbf{V}^{\mathbb{(A)}}}  \bigwedge_{x \in \dom(u)} \big{(}(u(x) \wedge \lb x=y \rb_\pa) \Rightarrow \lb \varphi(x) \rb_\pa  \big{)}.
\end{align*}

\noindent On the other hand, for any $x \in \dom(u)$,
\begin{align*}
    \bigwedge_{y \in  \mathbf{V}^{\mathbb{(A)}}}  \big{(}(u(x) \wedge \lb x=y \rb_\pa) \Rightarrow \lb\varphi(x) \rb_\pa \big{)} & \leq (u(x) \wedge \lb x=x \rb_\pa ) \Rightarrow \lb \varphi(x) \rb_\pa\\
    & = u(x) \Rightarrow \varphi(x), \mbox{ using Lemma \ref{PropertiesTvaluedmodels}}(i),
\end{align*}
which implies,
\[\bigwedge_{y \in \mathbf{V}^{\mathbb{(A)}}} \bigwedge_{x \in \dom(u)}  \big{(}(u(x)\wedge \llbracket x=y \rb_\pa) \Rightarrow \llbracket \varphi(x) \rb_\pa \big{)} \leq \bigwedge_{x \in \dom(u)}\big{(} u(x) \Rightarrow \varphi(x) \big{)}.\]

\noindent Hence, 
\[\bigwedge_{y \in \mathbf{V}^{\mathbb{(A)}}} \bigwedge_{x \in \dom(u)}  \big{(}(u(x)\wedge \llbracket x=y \rb_\pa) \Rightarrow \llbracket \varphi(x) \rb_\pa \big{)} 
= \bigwedge_{x \in \dom(u)}\big{(} u(x) \Rightarrow \varphi(x) \big{)},\]
and as a conclusion we have,
\[\lb \forall x \big{(} x \in u \to \varphi(x)\big{)} \rb_\pa = \bigwedge\limits_{x \in \dom(u)}\big{(} u(x) \Rightarrow \varphi(x) \big{)}.\]
\end{proof}

We shall now prove that any ultra-designated $\mathsf{Cobounded}$-algebra-valued model validate all the axioms of $\overline{\ZF}$. 

\begin{teo}\label{PAZF} 
Let $(\A,D)$ be an ultra-designated $\mathsf{Cobounded}$-algebra. Then, we have $\VA{\A,~ \lb \cdot \rb_\pa} \models_{D} \overline{\ZF}$. 
\end{teo}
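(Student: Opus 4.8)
The plan is to verify the axioms of $\Z$ one at a time, using throughout that in a $\co$-algebra the implication $\Rightarrow$ takes only the values $\one$ and $\zero$. Since every axiom in Figure \ref{figure: Aximos of ZF} as well as $\mathsf{\overline{Extensionality}}$ is a block of quantifiers over an implication or a biconditional, and since a meet or a join of a $\{\one,\zero\}$-valued family again lies in $\{\one,\zero\}$ in a $\co$-algebra, every axiom evaluates under $\lb\cdot\rb_\pa$ to $\one$ or $\zero$; as $D$ is an ultrafilter, Lemma \ref{lemma: ultrafilter of an MTV algebra} gives $\mathbf{A}\setminus D=\{\zero\}$, so ``valid'' coincides with ``value $\one$'' and it suffices to exclude the value $\zero$. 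The standing tools will be the closure property of $\co$-algebras (a meet is $\zero$, resp.\ a join is $\one$, only if some summand is), the facts collected in Lemma \ref{PropertiesTvaluedmodels} (reflexivity and transitivity of $\lb\cdot\rb_\pa$ on $=$, and its compatibility with $\in$) together with the evident symmetry of $\lb\cdot\rb_\pa$ on $=$, the bounded-quantification identity of Lemma \ref{lemma: Bounded quantification for Wplus valued models}, and Leibniz's law in the form of Corollary \ref{corrollary: validity of LL in W+}.

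A first group of axioms has the common shape $\forall\dots\exists y\,\forall z\,(z\in y\leftrightarrow\Theta(z))$. For these I would isolate one auxiliary fact: if $S\subseteq\VA{\A}$ is a set of names such that every $z$ with $\lb\Theta(z)\rb_\pa\neq\zero$ admits some $s\in S$ with $\lb s=z\rb_\pa=\one$, then the name $y$ given by $\dom(y)=S$ and $y(s)=\lb\Theta(s)\rb_\pa$ satisfies $\lb\forall z(z\in y\leftrightarrow\Theta(z))\rb_\pa=\one$; since the biconditional is two-valued this reduces to $\lb z\in y\rb_\pa\neq\zero\Leftrightarrow\lb\Theta(z)\rb_\pa\neq\zero$, and both directions fall out of the $\co$ closure property together with Corollary \ref{corrollary: validity of LL in W+} and the symmetry and transitivity of $\lb\cdot\rb_\pa$ on $=$. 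Pairing, Union, $\mathsf{Separation}_\varphi$ and Power Set then follow by exhibiting $S=\{u,v\}$, $S=\bigcup_{t\in\dom(u)}\dom(t)$, $S=\dom(u)$ and $S=\mathbf{A}^{\dom(u)}$ respectively; for Power Set one also needs that $\lb z\subseteq u\rb_\pa=\one$ forces $\lb s=z\rb_\pa=\one$ for the restriction $s$ of $z$ to $\dom(u)$ (with $s(x)=\lb x\in z\rb_\pa$), which is checked directly against the criterion of Theorem \ref{theorem: equality corresponding to paraconsistent assignment function}.

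For $\mathsf{\overline{Extensionality}}$, fix $u,v$ and assume the antecedent $\lb\forall z((z\in u\leftrightarrow z\in v)\wedge(\neg(z\in u)\leftrightarrow\neg(z\in v)))\rb_\pa\neq\zero$; reading off the two biconditionals — using that $a\Rightarrow b=\one$ iff $a=\zero$ or $b\neq\zero$, and that $a^*=\zero$ iff $a=\one$ — yields, for every $z$, $\lb z\in u\rb_\pa\in D\Leftrightarrow\lb z\in v\rb_\pa\in D$ and $\lb z\in u\rb_\pa=\one\Leftrightarrow\lb z\in v\rb_\pa=\one$; specialising $z$ to the members of $\dom(u)$ and of $\dom(v)$ and using Lemma \ref{PropertiesTvaluedmodels}(ii) and $\lb x\in u\rb_\pa\geq u(x)\wedge\lb x=x\rb_\pa$, these are precisely the two clauses of Theorem \ref{theorem: equality corresponding to paraconsistent assignment function}, hence $\lb u=v\rb_\pa=\one$. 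Infinity is witnessed by the name with domain $\{\hat n:n\in\omega\}$ and all values $\one$, where $\hat 0=\emptyset$ and $\hat{n+1}=\hat n\cup\{\langle\hat n,\one\rangle\}$, its two conjuncts being verified via Lemma \ref{lemma: Bounded quantification for Wplus valued models}. For $\mathsf{Collection}_\varphi$, assuming the antecedent valid for $u$, Lemma \ref{lemma: Bounded quantification for Wplus valued models} together with $\mathsf{Collection}$ in the ground model $\V$, applied to the relation $\{(x,p): x\in\dom(u),\,\lb\varphi(x,p)\rb_\pa\neq\zero\}$, yields an ordinal $\beta$; then $w$ with domain $\VA{\A}_\beta$ and all values $\one$ works, again via Lemma \ref{lemma: Bounded quantification for Wplus valued models}. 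For $\mathsf{Foundation}_\varphi$ I would argue by contradiction: assuming the progressiveness antecedent has value $\one$, if nevertheless $\lb\forall z\varphi(z)\rb_\pa\neq\one$ then it equals $\zero$, so the definable class $C=\{z\in\VA{\A}:\lb\varphi(z)\rb_\pa=\zero\}$ is nonempty; for $z_0\in C$ of minimal rank, Lemma \ref{lemma: Bounded quantification for Wplus valued models} and minimality give $\lb\forall y(y\in z_0\to\varphi(y))\rb_\pa=\one$, so progressiveness at $z_0$ forces $\lb\varphi(z_0)\rb_\pa\neq\zero$, a contradiction.

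The heart of the matter — and the reason for introducing $\lb\cdot\rb_\pa$ at all — is $\mathsf{Separation}_\varphi$ and $\mathsf{Foundation}_\varphi$ for formulas $\varphi$ that contain $\neg$: these are exactly the instances that fail for $\lb\cdot\rb_\ba$ on $\co$-algebras, and here everything rests on Leibniz's law (Corollary \ref{corrollary: validity of LL in W+}) and on the bounded-quantification identity (Lemma \ref{lemma: Bounded quantification for Wplus valued models}), the latter being available only because the former holds. A second point requiring care is $\mathsf{\overline{Extensionality}}$, where one genuinely must use \emph{both} the positive biconditional and its contrapositive companion to recover the two separate clauses — one about membership in $D$, the other about equality to $\one$ — in the characterisation of $\lb u=v\rb_\pa=\one$.
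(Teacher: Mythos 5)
Your proof is correct and follows essentially the same route as the paper: the same witnesses for each axiom (the two-element pair, the union of the members' domains, $\mathbf{A}^{\dom(u)}$ together with the restriction name $s(x)=\lb x\in z\rb_\pa$, the canonical $\check{\omega}$, Collection applied in $\V$, and a rank-minimal counterexample for Foundation) and the same key ingredients (Theorem \ref{theorem: equality corresponding to paraconsistent assignment function}, Lemma \ref{PropertiesTvaluedmodels}, Corollary \ref{corrollary: validity of LL in W+}, Lemma \ref{lemma: Bounded quantification for Wplus valued models}), your unified template for the axioms of shape $\exists y\forall z(z\in y\leftrightarrow\Theta(z))$ being only a tidier packaging of the paper's case-by-case verifications. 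One small slip: in Foundation, $\lb\forall z\,\varphi(z)\rb_\pa\neq\one$ does not force that value to be $\zero$ (a meet of intermediate values can be intermediate); the right dichotomy is ``some $\lb\varphi(z)\rb_\pa=\zero$'' versus not, and since in the latter case the consequent already lies in $D$, your contradiction argument goes through unchanged.
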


\begin{proof} $\overline{\mathsf{Extensionality}}$: Consider any $u,v \in \VA{\A}$ such that 
\[\lb\forall z \big{(}(z \in x \leftrightarrow z \in y) \wedge (\neg(z \in x) \leftrightarrow \neg(z \in y)) \rb_\pa\in D.\]
Then, we have to prove that $\lb u=v\rb_\pa \in D$ as well. By our assumption, for any $w \in \VA{\A}$ we have $\lb w \in u\leftrightarrow w \in  v\rb_\pa = \one = \lb \neg(w \in u) \leftrightarrow \neg (w \in  v) \rb_\pa$.
This can only be the case if for any $w \in \VA{\A}$,

\begin{enumerate}[label=(\arabic*)]
    \item $\lb w \in u \rb_\pa \in D \mbox{ iff } \lb w \in v\rb_\pa \in D $ and,
    \item $\lb w \in u \rb_\pa^{*} \in D \mbox{ iff } \lb w \in v\rb_\pa^{*}  \in D $.
\end{enumerate}

\noindent From (1) and (2) we have for any $w \in \VA{\A}$,

\begin{enumerate}
    \item[(i)] $\lb w \in u \rb_\pa = \one \mbox{ iff } \lb w \in v\rb_\pa = \one$,
    \item[(ii)] $\zero<\lb w \in u \rb_\pa < \one \mbox{ iff } \zero <\lb w \in v\rb_\pa < \one$,
    \item[(iii)] $\lb w \in u \rb_\pa = \zero \mbox{ iff } \lb w \in v\rb_\pa = \zero$.
\end{enumerate}
\noindent Let $x_1 \in \dom(u)$ be such that $u(x_1)= \one$, so $\lb x_1 \in u \rb_\pa= \one$. By (i), there exists a $y_1 \in \dom(v)$ such that $v(y_1)= \one$ and $\lb x_1 = y_1 \rb_\pa = \one$.  Let $x_2 \in \dom(u)$ be such that $u(x_2) \in D \setminus \{\one\}$, so $\zero<\lb x_2 \in u \rb_\pa < \one$. By (ii), there exists a $y_2 \in \dom(v)$ such that $\zero < v(y_2) < \one$ and $\lb x_2 = y_2 \rb_\pa = \one$. 
Now, applying Theorem \ref{theorem: equality corresponding to paraconsistent assignment function} we get $\lb u = v \rb_\pa = \one$. Therefore, $\lb \overline{\mathsf{Extensionality}} \rb_\pa = \one$. 

\bigskip

$\mathsf{Pairing}:$ We show that for any two $x,y  \in \VA{\mathbb{A}} $ there exists a $z  \in \VA{\mathbb{A}}$ such that 
\[\lb \forall w \big{(} w \in z \rightarrow (w=x \vee w=y)\big{)}   \rb_\pa \wedge \lb \forall w \big{(}(w = x \vee w=y) \rightarrow w \in z \big{)} \rb_\pa \in D.\]

\noindent We begin by showing that,
\begin{align*}
    \lb \forall w \big{(} & w \in z \rightarrow (w=x \vee w=y)\big{)}   \rb_\pa \in D.
\end{align*}
Fix two arbitrary $x,y  \in \VA{\A}$. Let $z$ be such that $\dom(z)=\{x,y\}$ and $\ran(z)=\{\one\}$. By using $\bq$ on the first conjunct we have:
\[ \lb \forall w \big{(} w \in z \rightarrow (w=x \vee w=y)\big{)}    \rb_\pa = \bigwedge_{w \in \dom(z)} \big{(} z(w) \Rightarrow \lb (w=x \vee w=y) \rb_\pa \big{)}.\]

\noindent Now take any $w_0 \in \dom(z)$ such that $z(w_0) \in D$, then due to the construction of $z$ we have  $\lb w_0 = x \vee w_0 = y \rb_\pa = \one$.  Therefore,  
\[\lb \forall w \big{(} w \in z \rightarrow (w=x \vee w=y)\big{)}   \rb_\pa \in D.\]

Now we will show that this is also the case for the second conjunct of $ \mathsf{Pairing}$, i.e., we show,
\[\lb \forall w \big{(}(w = x \vee w=y) \rightarrow w \in z \big{)} \rb_\pa  = \bigwedge_{w \in \VA{\mathbb{A}}} \big{(} \lb (w = x \vee w=y) \rb_\pa \Rightarrow \lb w \in z  \rb_\pa \big{)} \in D.\]
 Take any $w_0 \in \VA{\mathbb{A}}$ and suppose that $\lb w_0 = x \vee w_0 = y \rb_\pa  \in D$. Then, by the construction of $z$,  we get immediately that $\lb w_0 \in z  \rb_\pa \in D$. Therefore, 
\[\lb \forall w \big{(}(w = x \vee w=y) \rightarrow w \in z \big{)} \rb_\pa \in D.\]
Combining the previous results we conclude that that $\lb \mathsf{Pairing} \rb_\pa \in D$.

\bigskip

$\mathsf{Infinity}:$ 
It is enough to show that, 
\[\lb \forall z \neg( z \in \varnothing) \rb_\pa \wedge \lb \varnothing \in \check{\omega} \rb_\pa \wedge \lb  \forall w \big{(} w \in \check{\omega} \to \exists u (u \in \check{\omega} \wedge w \in u ) \big{)} \rb_\pa \in D, \]
where $\varnothing$ is the empty function in $\VA{\A}$ and $\omega$ is the collection of all natural numbers in $\V$.
The image of the first two conjuncts of $ \mathsf{Infinity}$ is clearly in $D$ under the assignment function $\lb \cdot \rb_\pa$. We show that,
\begin{align*}
    \lb \forall &w \big{(} w \in \check{\omega} \rightarrow \exists u(u \in \check{\omega} \wedge w \in u)\big{)}   \rb_\pa \in D.
\end{align*}

\noindent By applying $\bq$ we have: 

$$\lb \forall w \big{(} w \in \check{\omega} \to \exists u (u \in \check{\omega} \wedge w \in u ) \big{)} \rb_\pa =\bigwedge_{w \in \dom(\check{\omega})} \big{(}    \check{\omega}(w) \Rightarrow \lb    \exists u ( u \in \check{\omega} \wedge w \in u ) \rb_\pa \big{)}. $$

\noindent Now take any $\check{w_0} \in  \dom(\check{\omega})$. By the definition of  $\check{\omega}$, we have $\check{\omega}(\check{w_0})= \one$. Therefore, $\check{w_0} \in \omega$ holds in $\V$.  Now due to $\mathsf{Infinity}$ in $\V$ we know that there exists a $u_0 \in \V$ (the successor of $w_0$)  such that $ u_0 \in \omega$  and $ w_0 \in u_0 $ holds in $\V$. Thus $\check{u}_0 \in \VA{\mathbb{A}}$. 
We can check readily that $\lb \check{u}_0 \in \check{\omega} \rb_\pa = \one $ and $\lb w_0 \in u_0 \rb_\pa =  \one$. Therefore, $\lb \exists u(u \in \check{\omega} \wedge \check{w_0} \in u)  \rb_\pa  \in D$. Furthermore, since the choice of  $\check{w_0}$ was arbitrary we have 
\[\lb \forall w \big{(} w \in \check{\omega} \to \exists u (u \in \check{\omega} \wedge w \in u ) \big{)} \rb_\pa \in D.  \]

\bigskip

$\mathsf{Union}:$ We have to prove, for any $u \in \VA{\A}$, there exists an element $v \in \VA{\A}$ such that $\lb \forall x \big{(}x \in v \leftrightarrow \exists y (y \in u  \wedge x \in y ) \big{)} \rb_\pa \in D$. Indeed, it is sufficient to show that,
\[\lb \forall x \big{(}x \in v \rightarrow \exists y (y \in u  \wedge x \in y ) \big{)} \rb_\pa \wedge \lb \forall x \big{(} \exists y (y \in u  \wedge x \in y ) \to x \in v \big{)} \rb_\pa \in D.\]
Take any $u \in \VA{\A}$ and define $v \in \VA{\A}$, as follows: 
$$\dom(v)= \bigcup \{\dom(y)\mid y \in \dom(u)\} \mbox{ and } v(x) = \lb \exists y (y \in u  \wedge x \in y ) \rb_\pa, \mbox{ for } x \in \dom(v).$$
We first prove  that,
$$\lb \forall x \big{(}x \in v \rightarrow \exists y (y \in u  \wedge x \in y ) \big{)} \rb_\pa \in D,$$

\noindent By using $\bq$, we have

\begin{align*}
    & \lb \forall x \big{(}x \in v \rightarrow \exists y (y \in u  \wedge x \in y ) \big{)}\rb_\pa \\
    & =  \bigwedge_{x \in \dom(v)} \big{(} v(x) \Rightarrow \lb  \exists y (y \in u  \wedge x \in y )  \rb_\pa \big{)}\\
    & = \bigwedge_{x \in \dom(v)} \big{(} \lb \exists y (y \in u  \wedge x \in y ) \rb_\pa \Rightarrow \lb  \exists y (y \in u  \wedge x \in y )  \rb_\pa \big{)}\\
    & \in D, \mbox{ since $a \Rightarrow a = \one \in D$, for any element $a \in \A$.}
\end{align*}

We shall now show that $\lb \forall x \big{(} \exists y (y \in u  \wedge x \in y ) \to x \in v \big{)} \rb_\pa \in D$. Let $x_0 \in \VA{\A}$ be an element such that 
$\lb \exists y (y \in u  \wedge x_0 \in y ) \rb_\pa \in D$. The proof will be completed if it can be derived that $\lb x_0 \in v \rb_\pa \in D$, as well.
By definition, $\lb \exists y (y \in u  \wedge x_0 \in y ) \rb_\pa \in D$ implies that, there exists $y_0 \in \VA{\A}$ such that $\lb y_0 \in u ~ \wedge ~ x_0 \in y_0 \rb_\pa \in D$. Now, $\lb y_0 \in u \rb_\pa \in D_{\T}$ guarantees an element $y_1 \in \dom(u)$ such that $u(y_1) \in D$ and $\lb y_1 = y_0 \rb_\pa \in D$. 
So, we have, $\lb y_1 = y_0 ~ \wedge ~ x_0 \in y_0 \rb_\pa \in D$ and by using Lemma \ref{PropertiesTvaluedmodels}$(v)$ it can be concluded that $\lb x_0 \in y_1 \rb_\pa \in D$. Hence, there exists $x_1 \in \dom(y_1)$ satisfying $y_1(x_1) \in D$ and $\lb x_0 = x_1 \rb_\pa \in D$. 
Since, by our assumption, $u(y_1) \in D$ and $y_1(x_1) \in D$, applying Lemma \ref{PropertiesTvaluedmodels}$(ii)$ both $\lb y_1 \in u \rb_\pa \in D$ and $\lb x_1 \in y_1 \rb_\pa \in D$ hold. Hence, $\lb y_1 \in u ~ \wedge ~ x_1 \in y_1 \rb_\pa \in D$, which leads to the fact that $\lb \exists y (y \in u ~\wedge~ x_1 \in y ) \rb_\pa \in D$, i.e., $v(x_1) \in D$. 
So, we have derived that, $\lb x_0 = x_1 \rb_\pa, ~ v(x_1) \in D$. Hence, $\lb x_0 \in v \rb_\pa \in D$.

Therefore, we can conclude  $\lb \mathsf{Union} \rb_\pa \in D$.

$\mathsf{Power set}:$ We have to prove that for any $x \in \VA{\mathbb{A}}$ there exists a $y \in \VA{\mathbb{A}}$ such that
\[\big{(}\lb \forall z \big{(}z \in y \rightarrow \forall w (w \in z  \to w \in x)\big{)} \rb_\pa \wedge \lb \forall z \big{(} \forall w(w \in z \rightarrow w \in x)\rightarrow z \in y  \big{)}\rb_\pa \big{)} \in D.\]
Take any $x \in \VA{\mathbb{A}}$ and define $y$ such that
\[\dom(y)= \mathbb{A}^{\dom(x)} \mbox{ and for any }z \in \dom(y), y(z)= \lb \forall w (w \in z \to w \in x) \rb_\pa.\]
The first conjunct can be proved by an immediate application of the property $\bq$.

We now show that,
\[\bigwedge_{z  \in \VA{\mathbb{A}}} \lb \big{(} \forall w(w \in z \rightarrow w \in x)\rightarrow z \in y  \big{)}\rb_\pa \in D.\]
Let us fix an arbitrary $z \in \VA{\mathbb{A}}$. Then, 
\begin{align*}
 \lb &\forall w(w \in z \rightarrow w \in x) \rb_\pa \Rightarrow  \lb z \in y  \rb_\pa \\
 = &\bigwedge_{w \in \dom(z)} \big{(}  z(w) \Rightarrow \lb w \in x \rb_\pa \big{)} \Rightarrow \bigvee_{q \in \dom(y)} \big{(} y(q) \wedge  \lb z = q \rb_\pa \big{)}, \mbox{ using the property $\bq$}\\
= & \bigwedge_{w \in \dom(z)} \big{(}  z(w) \Rightarrow \lb w \in x \rb_\pa \big{)} \Rightarrow  \bigvee_{q \in \dom(y)} \big{(} \bigwedge_{p \in \dom(q)} \big{(}  q(p) \Rightarrow \lb p \in x \rb_\pa \big{)} \wedge \\
 &\qquad \qquad \bigwedge_{w \in \dom(z)}  \big{(}z(w) \Rightarrow \lb w \in q \rb_\pa \big{)} \wedge \bigwedge_{w \in \dom(z)} \big{(}   \lb w \in q \rb_\pa^{*} \Rightarrow z(w)^{*}  \big{)} \wedge\\ 
 &\qquad \qquad \bigwedge_{p \in \dom(q)} \big{(} q(p) \Rightarrow \lb p \in z \rb_\pa   \big{)} \wedge \bigwedge_{p \in \dom(q)} \big{(}   \lb p \in z \rb_\pa^{*} \Rightarrow  q(p)^{*}  \big{)} \big{)}.\\
\end{align*}
Let us assume that, 
\begin{align*}
&\bigwedge_{w \in \dom(z)} \big{(}  z(w) \Rightarrow \lb w \in x \rb_\pa \big{)} \neq \zero. \tag{$\dagger$}
\end{align*}
Then it is enough to show that there exists an element $q'\in \dom(y)$ so that,
\begin{enumerate}
    \item[(i)] $\bigwedge\limits_{p \in \dom(q')} \big{(} q'(p) \Rightarrow \lb p \in x \rb_\pa \big{)} \neq \zero$,
    \item[(ii)] $\bigwedge\limits_{w \in \dom(z)}  \big{(}z(w) \Rightarrow \lb w \in q' \rb_\pa \big{)} \neq \zero$,
    \item[(iii)] $\bigwedge\limits_{w \in \dom(z)}  \big{(}  \lb w \in q' \rb_\pa^{*} \Rightarrow z(w)^{*} \big{)} \neq \zero$,
    \item[(iv)] $\bigwedge\limits_{p \in \dom(q')} \big{(} q'(p) \Rightarrow \lb p \in z \rb_\pa   \big{)} \big{)} \neq \zero$, and
    \item[(v)] $\bigwedge\limits_{p \in \dom(q')} \big{(}   \lb p \in z \rb_\pa^{*}  \Rightarrow   q'(p)^{*}  \big{)} \neq \zero$.
\end{enumerate}
 Fix $q'\in \dom(y)$ such that $q'(p)=  \lb p \in z \rb_\pa $, for all $p \in \dom(q')$. We now prove that all the properties (i) - (v) hold for the element $q'\in \dom(y)$.

\begin{enumerate}
    \item[(i)] Notice that due to $(\dagger)$ we have $\lb \forall w(w \in z \rightarrow w \in x) \rb_\pa \neq \zero$ \begin{align*} &\mbox{i.e.,} \bigwedge_{w \in \VA{\mathbb{A}}} \big{(} \lb w \in z \rb \Rightarrow \lb w \in x \rb_\pa \big{)} \neq \zero\\
      &\mbox{i.e.,} \bigwedge_{p \in \dom(q')} \big{(} \lb p \in z \rb_\pa  \Rightarrow \lb p \in x \rb_\pa \big{)} \neq \zero\\ 
       &\mbox{i.e.,} \bigwedge_{p \in \dom(q')} \big{(}  q'(p)   \Rightarrow \lb p \in x \rb_\pa \big{)} \neq \zero 
    \end{align*}

\item[(ii)] In order to show that $\bigwedge\limits_{w \in \dom(z)}  \big{(} z(w) \Rightarrow  \lb w \in q' \rb_\pa \big{)} \in D$, if $z(w) =\zero$, then we are done. So, let $z(w) \neq \zero$. Then, due to $(\dagger)$ we can conclude that $\lb w \in x \rb_\pa \neq \zero$, i.e., 
\[\bigvee_{p \in \dom(x)}(x(p) \wedge \lb p = w \rb_\pa) \neq \zero.\] Thus there exists a $p' \in  \dom(x)$ such that $x(p')\neq \zero \mbox{ and } \lb p'=w \rb_\pa = \one$. Hence, $\lb p' \in z  \rb\geq (z(w)\wedge \lb p'=w \rb_\pa ) \neq \zero$ by our assumptions and by Definition \ref{Definition: MTV algebra}(iv). Hence, we get the following,
\begin{align*}
    \lb w \in q' \rb_\pa & = \bigvee_{p \in \dom(q')}(q'(p) \wedge \lb p = w \rb_\pa)\\
    & \geq (q'(p') \wedge \lb p' = w \rb_\pa), \mbox{ since $p' \in \dom(q')$ }\\
    & = (  \lb p' \in z \rb_\pa \wedge \lb p' = w \rb_\pa)\\
    &  \neq \zero.
\end{align*}
Hence, $\bigwedge_{w \in \dom(z)}\limits \big{(}z(w) \Rightarrow \lb w \in q' \rb_\pa \big{)}  \neq \zero$, by Definition \ref{Definition: MTV algebra}(iv). 
\item[(iii)] We show that $\bigwedge\limits_{w \in \dom(z)}  \big{(}  \lb w \in q' \rb_\pa^{*} \Rightarrow z(w)^{*} \big{)} \in D$. If $z(w)\neq \one$ then we are done. So let $z(w') = \one$ for some  $w'\in \dom(z)$. Then we have to prove that $\lb w' \in q' \rb_\pa = \one$. Now, 
\begin{align*}
    \lb w' \in q' \rb_\pa & = \bigvee_{p \in \dom(q')}(q'(p) \wedge \lb p = w' \rb_\pa)\\
    & =  \bigvee_{p \in \dom(q')}(  \lb p \in z \rb_\pa \wedge \lb p = w' \rb_\pa).\\
\end{align*} 
But then, due to $(\dagger)$, we know that there exists a $p'  \in \dom(x)$ such that $x(p')\neq \zero$ and $\lb p' =w'  \rb_\pa = \one$.  Thus, $\lb p\in z\rb \geq (z(w' ) \wedge \lb w' = p' \rb_\pa) = \one$. Therefore, we can conclude $ \lb w' \in q' \rb_\pa  = \one$.
\item[(iv)]  We show that $\bigwedge\limits_{p \in \dom(q')} \big{(} q'(p) \Rightarrow \lb p \in z \rb_\pa   \big{)} \in D$. Notice,
\begin{align*}&\bigwedge_{p \in \dom(q')} \big{(} q'(p) \Rightarrow \lb p \in z \rb_\pa   \big{)}\\
&=\bigwedge_{p \in \dom(q')} \big{(} \lb p \in z \rb_\pa  \Rightarrow \lb p \in z \rb_\pa   \big{)}\\
&\in D, \mbox{ since for any } a \in \mathbf{A}, (a \Rightarrow a)= \one.\\
\end{align*}
    
\item[(v)] Same as (iv).



\end{enumerate} Finally, we can conclude that $\lb \mathsf{Powerset} \rb_\pa \in D$.  

\bigskip 

$\mathsf{Separation}:$ Let $\varphi(x)$ be any formula in $\mathcal{L}_\A$, where $x$ is the only free variable. We want to show that for any  $x \in \VA{\mathbb{A}}$ there exists a $y \in \VA{\mathbb{A}}$ such that 
\[\lb \forall z \big{(}z \in y  \leftrightarrow  (z \in x \wedge \varphi(z))\big{)}\rb_\pa \in D.\]
It is sufficient to show that: 
\[\lb \forall z \big{(}z \in y  \rightarrow  (z \in x \wedge \varphi(z))\big{)} \rb_\pa \wedge \lb \forall z \big{(}( z \in x \wedge \varphi(z)) \rightarrow  z \in y\big{)} \rb_\pa \in D. \]
For any $x \in \VA{\mathbb{A}}$ define $y \in \VA{\mathbb{A}}$ as follows: 
$$\dom(y)= \dom(x) \mbox{ and for any } z \in  \dom(y) \mbox{ let } y(z)= x(z) \wedge \lb \varphi(z) \rb_\pa.$$
We first prove that,
$$\lb \forall z \big{(}z \in y  \rightarrow  (z \in x \wedge \varphi(z))\big{)} \rb_\pa \in D.$$

By using $\bq$ , we have

\begin{align*}
    & \lb \forall z \big{(}z \in y \rightarrow  (z \in x  \wedge \varphi(z) ) \big{)}\rb_\pa \\
    & =  \bigwedge_{z \in \dom(y)} \big{(} y(z) \Rightarrow \lb z \in x  \wedge \varphi(z) \rb_\pa \big{)}\\
    & = \bigwedge_{z \in \dom(y)} \big{(}  (x(z) \wedge \lb \varphi(z) \rb_\pa)  \Rightarrow (\lb z \in x \rb_\pa  \wedge \lb \varphi(z)  \rb_\pa ) \big{)}\\
    & \in D, \mbox{ by Lemma \ref{PropertiesTvaluedmodels}(ii)}.
\end{align*}

Now we show that the second conjunct of $\mathsf{Separation}$ holds as well. Since, for any $a,b,c$ in $\A$, 
$(a \wedge b) \Rightarrow c = a \Rightarrow (b \Rightarrow c)$, we have 
\begin{align*}
\bigwedge_{z \in \VA{\mathbb{A}}} \lb ( z \in x \wedge \varphi(z)) \rightarrow z \in y  \rb_\pa & = \bigwedge_{z \in \VA{\mathbb{A}}} \big{(}\lb z \in x \rb_\pa \Rightarrow (\lb \varphi(z) \rb_\pa \Rightarrow \lb z \in y \rb_\pa) \big{)}\\
& = \lb \forall z \big{(} z \in x \rightarrow (\varphi(z) \rightarrow z \in y)\big{)}\rb_\pa \\
& = \bigwedge_{z \in \dom(x)}\big{(} x(z) \Rightarrow  \lb(\varphi(z) \Rightarrow z \in y ) \rb_\pa \big{)}.
\end{align*}
For any $z_0 \in \dom(x)$, suppose $x(z_0), \lb  \varphi(z_0)  \rb_\pa  \in D$. Then by construction of $y$ we have $y(z_0) \in D$ and by lemma \ref{PropertiesTvaluedmodels}(ii) $\lb z_0 \in y \rb_\pa \in D$. Therefore, we can conclude that for any $z_0 \in \dom(x)$,  $\big{(}x(z_0) \Rightarrow  \lb(\varphi(z_0) \Rightarrow z_0 \in y ) \rb_\pa) \big{)} \in D$. Hence, 
\[\lb \forall z \big{(} (z \in x \wedge \varphi(z)) \rightarrow  (z \in y) \big{)} \rb_\pa \in D.\]

 Since the images of both the conjuncts of $\mathsf{Separation}$ are in $D$ under the assignment function $\lb \cdot \rb_\pa$, we  conclude that $\lb \mathsf{Separation} \rb \in D$.

\bigskip

$\mathsf{Collection}:$ Let $\varphi(x,y)$ be any formula in the language of set theory with two free variables.  We want to proof that for every $u \in \VA{\mathbb{A}}$ we have 
\[\lb  \forall x\big{(}x \in u \to \exists y \varphi (x, y)\big{)} \rightarrow \exists v \forall x\big{(}x \in u \to \exists y(y \in v \wedge \varphi (x, y))\big{)} \rb_\pa \in D.\]
Take any $u \in \VA{\mathbb{A}}$ and assume the antecedent holds, so  $\lb  \forall x\big{(}x \in u \to \exists y \varphi (x, y)\big{)} \rb_\pa \in D$. In particular, by using $\bq$, we have;
\[\tag{1} \lb\forall x\big{(}x \in u \rightarrow \exists y \varphi (x, y)\big{)} \rb_\pa    =  \bigwedge_{x \in \dom(u)} \big{(}u(x) \Rightarrow \bigvee_{y \in \VA{\mathbb{A}} }  \lb \varphi(x,y) \rb_\pa \big{)} \in D.\]
Now we will show that there exists a $v \in \VA{\mathbb{A}}$ such that
\[\lb\forall x\big{(}x \in u \rightarrow \exists y(y \in v \wedge \varphi (x, y))\big{)} \rb_\pa \in D.\]

We  know that  $\mathbb{A}$ is  a set, so $\mathbb{A} \in \mathbf{V}$. Thus, we may apply $\mathsf{Collection}$ in $\V$ so that for any $x \in \dom(u)$ we obtain an ordinal $\alpha_x$ such that  
$$ \bigvee_{y \in \VA{\mathbb{A}}} \lb \varphi(x,y) \rb_\pa = \bigvee_{y \in \VA{\mathbb{A}}_{\alpha_x}}\lb \varphi(x,y) \rb_\pa.$$
So we have 
\[ \tag{2} \bigwedge_{x \in \dom(u)} \big{(} u(x) \Rightarrow \bigvee_{y \in \VA{\mathbb{A}}_{\alpha_x}}  \lb \varphi(x,y) \rb_\pa \big{)} \in D.\] 
We apply the union axiom in $\V$ to define $\alpha = \bigcup\{ \alpha_x : x \in \dom(u)\}$.  We define the element $v \in \VA{{\mathbb{A}}}$ as $\dom(v)= \VA{\mathbb{A}}_{\alpha}$ and  for every $y \in \dom(v), v(y) = \one$. We move on to show, $$\bigwedge_{x \in \dom(u)} \big{(} u(x)  \Rightarrow \lb \exists y\big{(}y \in v \wedge \varphi (x, y)\big{)} \rb_\pa \big{)} \in D.$$
Take any $x_0 \in \dom(u)$ such that $u(x_0) \in D$. By $(2)$ we have $y_0 \in \VA{\mathbb{A}}_{\alpha_{x_0}}$ such that $\lb \varphi(x_0,y_0) \rb_\pa \in D$. By our construction $y_0 \in \dom(v)$ and $v(y_0) = \one$. Then, it follows by Lemma $\ref{PropertiesTvaluedmodels}$(ii) that $\lb y_0 \in v \rb \in D$.  Therefore, $\lb y_0 \in v \wedge \varphi(x_0,y_0) \rb_\pa \in D$ and thus,
\[\lb \exists y \big{(}y \in v \wedge \varphi(x_0,y) \big{)} \rb_\pa \in D.\]
Since the choice of $x_0$ is arbitrary we have, 
\[  \lb\forall x\big{(}x \in u \to \exists y(y \in v \wedge \varphi (x, y))\big{)} \rb_\pa \in D. \]

Therefore, we conclude that $\lb \mathsf{Collection}\rb \in D$.

\bigskip

$\mathsf{Foundation}:$ We want to show that 
\[\lb \forall x \big{(}\forall y(y \in x \to \varphi(y)) \rightarrow \varphi(x)  \big{)} \rightarrow \forall x \varphi(x) \rb_\pa \in D.\]

Take any $x \in \VA{\mathbb{A}}$ and consider the following two cases:

\noindent (i) Let $\lb \varphi (x) \rb_\pa \in D$ for every $x \in \VA{\mathbb{A}}$, which implies $\lb \forall x \varphi(x) \rb_\pa \in D$.
Therefore,  we get  
\[\lb \forall x \big{(}(\forall y(y \in x \to \varphi(y))) \rightarrow \varphi(x)\big{)} \rightarrow \forall x \varphi(x) \rb_\pa \in D.\]
(ii) Let $\lb \varphi (x) \rb_\pa \notin D$ for some $x \in \VA{\mathbb{A}}$. 
Then, take a minimal $u \in \VA{\mathbb{A}}$ such that $\lb\varphi(u) \rb_\pa \notin D$ and for any $v \in \dom(u)$, $\lb\varphi(v) \rb_\pa \in D$. For this $u$, we claim that 
\[\lb \forall y(y \in u \to \varphi(y)) \rb_\pa     \in D.\]
Using $\bq$ and our assumption, it is immediate that $\lb \forall y(y \in u \to \varphi(y))\rb_\pa \in D$.  From this, we can conclude that

\begin{align*}
&\lb \forall x  \big{(} (\forall y(y \in x \to \varphi(y))  \to \varphi (x) \big{)} \rb_\pa\\
&\leq 
\lb  \big{(}\forall y(y \in u \to \varphi(y)) \rb_\pa \Rightarrow \lb \varphi (u)  \rb_\pa\\
&= \zero.
\end{align*}
Hence, we can finally conclude that
\[\lb \forall x \big{(}\forall y(y \in x \to \varphi(y)) \rightarrow \varphi(x)  \big{)} \rightarrow \forall x \varphi(x) \rb_\pa \in D,\]
i.e., $\lb \mathsf{Foundation} \rb_\pa \in D$.

\end{proof}

\section{Logic(s) corresponding to an algebra $\A$ and $\VA{\A}$}

Let $\mathsf{Prop}$ and $\mathsf{Sent}$ be the collection of all propositional formulas and all sentences of the language $\mathcal{L}_\in$, respectively.
\begin{defi}
Any homomorphism $\tau : \mathsf{Prop} \to \mathsf{Sent}$ is said to be a translation function.
\end{defi}

\noindent Then, following  \cite{illoyal}, we define  the logic of the set theory $\VA{\A}$ under a given assignment function $\lb \cdot \rb$, where $\A$ is a complete distributive lattice.
\begin{defi}
Let $\A = \langle \mathbf{A}, \wedge, \vee, \Rightarrow, ^*, \one, \zero \rangle$ be an algebra, where $\langle \mathbf{A}, \wedge, \vee, \one, \zero \rangle$ is a complete distributive lattice. 
Corresponding to a given assignment function $\lb \cdot \rb$ and a designated set $D$, the logic of the set theory $\VA{\A}$, denoted by $\mathbf{L}(\lb\cdot\rb, \VA{\A})$, is the collection of all propositional formulas $\alpha$ such that $\lb \tau(\alpha) \rb \in D$, for all translation functions $\tau$, i.e., 
\[\mathbf{L}(\lb\cdot\rb, \VA{\A}):= \{\alpha \in \mathsf{Prop}: \mbox{ for all translation function } \tau, \lb \tau(\alpha) \rb \in D\}.\]
The logic of the algebra $\A$ with respect to the designated set $D$, denoted by $\mathbf{L}(\A, D)$, is defined as follows:
\[\mathbf{L}(\A, D):= \{\alpha \in \mathsf{Prop}: \mbox{ for all valuation } v, v(\alpha) \in D\}.\]
\end{defi}

For any Boolean algebra $\mathbb{B}$, $\mathbf{L}(\mathbb{B}, \{\one\})$ is nothing but the classical propositional logic. Since, the two-valued designated $\co$-algebra $(\A, D)$ is the two-valued Boolean algebra, $\mathbf{L}(\A, D)$ is the classical propositional logic. Note that in this case $D = \{\one\}$.

\subsection{Logic of a designated $\co$-algebra}

We shall now prove that, for any designated $\co$-algebra $(\A, D)$ having at least two elements in $D$, the logic $\mathbf{L}(\A, D)$ is \emph{paraconsistent}: 
for an algebra $\A$ and a designated set $D$, the logic $\mathbf{L}(\A, D)$ is said to be paraconsistent if there exist propositional formulas $\alpha$ and $\beta$ such that $(\alpha \wedge \neg \alpha) \to \beta \notin \mathbf{L}(\A, D)$.

\begin{teo}\label{theorem: L(A, D) is paraconsistent}
If $(\A, D)$ is a designated $\co$-algebra, where $D$ contains at least two elements, then $\mathbf{L}(\A, D)$ is paraconsistent.
\end{teo}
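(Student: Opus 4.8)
The plan is to exhibit a single propositional valuation $v$ and two variables $p,q$ for which $v\big((p\wedge\neg p)\to q\big)\notin D$; taking $\alpha:=p$ and $\beta:=q$ then witnesses that $\mathbf{L}(\A,D)$ is paraconsistent. The whole argument rests on one observation about the negation operator: in a designated $\co$-algebra the map $^{*}$ has a \emph{nonzero fixed point} precisely when $D$ is nontrivial.

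First I would use the hypothesis that $|D|\geq 2$ to pick an element $a\in D\setminus\{\one\}$ (this is exactly the clause of Definition \ref{Definition: MTV algebra} that becomes non-vacuous only under this hypothesis). By that middle clause, $a^{*}=a$, and hence $a\wedge a^{*}=a\wedge a=a$ by idempotency, so $a\wedge a^{*}=a\neq\zero$ and in fact $a\wedge a^{*}\in D$. Intuitively, the ``contradiction'' $p\wedge\neg p$ can be assigned a designated, nonzero value, which is what prevents explosion.

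Next I would fix two distinct propositional variables $p$ and $q$ and let $v$ be any valuation with $v(p)=a$ and $v(q)=\zero$. Then $v(p\wedge\neg p)=a\wedge a^{*}=a$, and therefore
$v\big((p\wedge\neg p)\to q\big)=a\Rightarrow\zero$. Since $a\neq\zero$ (because $a\in D$ while $\zero\notin D$) and the consequent equals $\zero$, clause (iv) of the definition of a $\co$-algebra gives $a\Rightarrow\zero=\zero$. As $\zero\notin D$ by the definition of a designated set, we conclude $v\big((p\wedge\neg p)\to q\big)\notin D$, so $(p\wedge\neg p)\to q\notin\mathbf{L}(\A,D)$, which is the required paraconsistency.

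I do not expect a genuine obstacle here; the argument is short. The only points requiring care are to invoke the correct clause of Definition \ref{Definition: MTV algebra} for $^{*}$ (the one governing elements of $D\setminus\{\one\}$, which is the clause whose non-emptiness is guaranteed by $|D|\geq 2$) and to note explicitly that this same hypothesis, together with $\zero\notin D$, forces $a\neq\zero$, so that the implication clause of the $\co$-algebra indeed outputs $\zero$ rather than $\one$.
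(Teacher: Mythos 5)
Your argument is correct and is essentially the same as the paper's proof: choose $a\in D\setminus\{\one\}$, use $a^{*}=a$, evaluate $\alpha$ at $a$ and $\beta$ at $\zero$, and conclude $v\big((\alpha\wedge\neg\alpha)\to\beta\big)=\zero\notin D$. The only difference is that you spell out the intermediate computations ($a\wedge a^{*}=a$ and $a\neq\zero$) more explicitly than the paper does.
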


\begin{proof}
By the given condition $D \setminus \{\one\} \neq \varnothing$. Let $a \in D \setminus \{\one\}$. By the definition of designated $\co$-algebra, $a^* = a$. 
Consider two propositional formulas $\alpha$ and $\beta$, and a valuation function $v$ such that $v(\alpha) = a$ and $v(\beta) = \zero$. 
Then, 
\[v((\alpha \wedge \neg \alpha) \to \beta) = \zero \notin D.\]
Hence, $(\alpha \wedge \neg \alpha) \to \beta \notin \mathbf{L}(\A, D)$, proving that $\mathbf{L}(\A, D)$ is paraconsistent.
\end{proof}

In Section \ref{section: the algebra PS3}, we introduced the algebra $\ps$ having the designated set $\{1, \nicefrac{1}{2}\}$. A paraconsistent logic $\mathbb{L}\ps$ was found which is sound and complete with respect to $\ps$ \cite[Theorem 3.1 \& 3.6]{Tarafder-Chakraborty}. 
We shall now prove that any ultra-designated $\co$-algebra (which is not a Boolean algebra) is also sound and complete with respect to $\mathbb{L}\ps$. 
Let $(\A, D)$ be an ultra-designated $\co$-algebra. We can check that the function $f_\A : \A \to \s$ remains a homomorphism as $f_\A(a^*) = \big{(}f_\A(a)\big{)}^*$, for all $a \in \mathbf{A}$. We shall use this fact to prove the following soundness and completeness theorem.

\begin{teo}
Let $(\A, D)$ be an ultra-designated $\co$-algebra having more than two elements. Then, $\mathbb{L}\ps$ is sound and complete with respect to $(\A, D)$.
\end{teo}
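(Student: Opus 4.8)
The plan is to reduce the statement to the known soundness and completeness of $\mathbb{L}\ps$ with respect to $(\ps, \{1, \nicefrac{1}{2}\})$ \cite{Tarafder-Chakraborty}: it suffices to prove that $\mathbf{L}(\A, D) = \mathbf{L}(\ps, \{1, \nicefrac{1}{2}\})$. Both inclusions are obtained by transporting propositional valuations along the two maps connecting $\A$ and $\ps$, namely the homomorphism $f_\A$ of Lemma \ref{lemma: f_A is a homomorphism} (which, as remarked just above the theorem, also satisfies $f_\A(a^*) = (f_\A(a))^*$ for every $a\in\mathbf{A}$, precisely because $D$ is an ultrafilter) and a one-sided inverse $s$ of $f_\A$ going in the opposite direction.

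For the inclusion $\mathbf{L}(\ps, \{1, \nicefrac{1}{2}\}) \subseteq \mathbf{L}(\A, D)$, take $\alpha$ in the left-hand set and an arbitrary valuation $v \colon \mathsf{Prop} \to \A$. Since $f_\A$ preserves $\wedge, \vee, \Rightarrow$ and $^*$, the composite $f_\A \circ v$ is a valuation into $\ps$, so $f_\A(v(\alpha)) \in \{1, \nicefrac{1}{2}\}$, i.e.\ $f_\A(v(\alpha)) \neq 0$, i.e.\ $v(\alpha) \neq \zero$. By Lemma \ref{lemma: ultrafilter of an MTV algebra} we have $\mathbf{A}\setminus D = \{\zero\}$, hence $v(\alpha)\in D$; as $v$ was arbitrary, $\alpha \in \mathbf{L}(\A, D)$.

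For the reverse inclusion I argue contrapositively. Suppose $\alpha \notin \mathbf{L}(\ps, \{1, \nicefrac{1}{2}\})$ and fix a valuation $w\colon \mathsf{Prop}\to\ps$ with $w(\alpha)\notin\{1,\nicefrac{1}{2}\}$, i.e.\ $w(\alpha)=0$. Using the hypothesis that $\A$ has more than two elements, choose $a\in\mathbf{A}\setminus\{\one,\zero\}$ and define $s\colon\ps\to\A$ by $s(1)=\one$, $s(\nicefrac{1}{2})=a$, $s(0)=\zero$. Then $\{\one,a,\zero\}$ is a three-element subalgebra of $\A$ and $s$ is an isomorphism onto it; the only points needing checking are that $a\Rightarrow a=\one$, $a\Rightarrow\zero=\zero$ and $a^*=a$ in $\A$, which hold exactly because $a\neq\zero$ and $a\in D\setminus\{\one\}$, by the definitions of the $\co$-operations. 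Consequently $v:=s\circ w\colon\mathsf{Prop}\to\A$ is a valuation with $v(\alpha)=s(0)=\zero\notin D$, so $\alpha\notin\mathbf{L}(\A,D)$.

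The only non-routine step is the verification that $s$ respects the operations, in particular $^*$; this is also where the cardinality hypothesis enters, since for the two-element Boolean algebra there is no proper element to serve as $s(\nicefrac{1}{2})$ and indeed its logic is classical, strictly stronger than $\mathbb{L}\ps$. Symmetrically, ultra-designatedness is used twice on the soundness side: to conclude that $v(\alpha)\neq\zero$ forces $v(\alpha)\in D$ via Lemma \ref{lemma: ultrafilter of an MTV algebra}, and to guarantee that $f_\A$ is a $^*$-homomorphism at all. Everything else is composition of homomorphisms and unwinding the definitions of $\mathbf{L}(\A,D)$ and $\mathbf{L}(\ps,\{1,\nicefrac{1}{2}\})$.
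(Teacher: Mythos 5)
Your proof is correct and takes essentially the same route as the paper: the completeness direction lifts a refuting $\ps$-valuation to $\A$ along the section $1\mapsto\one$, $\nicefrac{1}{2}\mapsto a$, $0\mapsto\zero$ for some $a\in\mathbf{A}\setminus\{\one,\zero\}$ (the paper pushes the resulting value back down with the homomorphism $f_\A$ instead of checking directly that the section is a homomorphism, but this is the same computation, and both uses of ultra-designatedness you flag are exactly where the paper needs it too). The only difference is that you also write out the soundness direction, composing an arbitrary $\A$-valuation with $f_\A$ and invoking Lemma \ref{lemma: ultrafilter of an MTV algebra}, which the paper dismisses as straightforward.
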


\begin{proof}
As usual, the proof of soundness is straight forward. We shall prove the completeness only. 
For this, we first fix that $\mathsf{Prop}$ denotes the collection of all propositional formulas. 
Let $\varphi \in \mathsf{Prop}$ be such that $\models_\A \varphi$, where $\models$ has its usual meaning. If possible let  $\not\models_{\ps} \varphi$.
Then, there exists a valuation $v_{\ps} : \mathsf{Prop} \to \ps$ such that $v_{\ps}(\varphi) = 0$. 
Let $p_1, p_2, \ldots, p_n$ be the propositional variables present in $\varphi$. 
Let us consider a valuation $v_{\A} : \mathsf{Prop} \to \A$, which satisfies the following: for each $i = 1, \ldots, n$
\begin{align*}
   v_\A(p_i) & = \left \{\begin{array}{ll}
                        \one, & \mbox{if } v_{\ps}(p_i) = 1;  \\
                        a, & \mbox{if } v_{\ps}(p_i) = \nicefrac{1}{2};\\
                        \zero, & \mbox{if } v_{\ps}(p_i) = 0,
                        \end{array}\right.
\end{align*}
where $a \in \mathbf{A} \setminus \{\one, \zero\}$ is arbitrarily fixed; the existence of such an element is ensured by the fact that $\mathbf{A}$ (the domain of $\A$) has more than two elements. 
Then, by definition, $f_\A\big{(}v_\A(p_i)\big{)} = v_{\ps}(p_i)$, for all $i = 1, \ldots, n$. Since $f_\A$ is a homomorphism, $f_\A\big{(}v_\A(\varphi)\big{)} = v_{\ps}(\varphi)$. 
Hence, by our assumption $f_\A\big{(}v_\A(\varphi)\big{)} = 0$. This implies that $v_\A(\varphi) = \zero$, which contradicts the fact that $\models_\A \varphi$. Therefore, we conclude that $\models_{\ps} \varphi$. Since $\mathbb{L}\ps$ is complete with respect to $\ps$, finally we get that $\vdash \varphi$, where $\vdash$ has its usual meaning. This completes the proof.
\end{proof}

\subsection{The logics of Boolean-valued models under $\lb \cdot \rb_\ba$ and $\lb \cdot \rb_\pa$}

Using Theorem \ref{theorem: importance of PA-assignment}, we know that the two interpretations $\lb \cdot \rb_\ba$ and $\lb \cdot \rb_\pa$ agree upon the validity of set theoretic sentences when a Boolean-valued model is concerned. Hence, we obtain the following theorem.

\begin{teo}
For any complete Boolean algebra $\A$ with $\{\one \}$ as designated set, we have that  $\mathbf{L}(\lb\cdot\rb_\ba, \VA{\A}) = \mathbf{L}(\lb\cdot\rb_\pa, \VA{\A})$.
\end{teo}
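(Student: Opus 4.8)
The plan is to deduce the equality of the two logics directly from Theorem~\ref{theorem: importance of PA-assignment}, which already records that $\lb\cdot\rb_\ba$ and $\lb\cdot\rb_\pa$ agree on whether a sentence of $\mathcal{L}_\A$ receives the top value $\one$ over a complete Boolean algebra. Since here the designated set is exactly $\{\one\}$, this top-value agreement is precisely the information we need, and no pointwise comparison of the two assignment functions on all of $\A$ is required.

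In detail, I would fix an arbitrary $\alpha \in \mathsf{Prop}$ and unwind the definition of $\mathbf{L}(\lb\cdot\rb_X, \VA{\A})$ for $X \in \{\ba, \pa\}$ with $D = \{\one\}$: we have $\alpha \in \mathbf{L}(\lb\cdot\rb_X, \VA{\A})$ if and only if $\lb \tau(\alpha)\rb_X = \one$ for every translation function $\tau : \mathsf{Prop} \to \mathsf{Sent}$. For each such $\tau$, the formula $\tau(\alpha)$ is a sentence of $\mathcal{L}_\in$, hence of $\mathcal{L}_\A$, so Theorem~\ref{theorem: importance of PA-assignment} applies with $\mathbb{B} := \A$ and $\varphi := \tau(\alpha)$ and yields $\lb \tau(\alpha)\rb_\ba = \one$ iff $\lb \tau(\alpha)\rb_\pa = \one$. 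Conjoining this equivalence over all translation functions $\tau$ gives $\alpha \in \mathbf{L}(\lb\cdot\rb_\ba, \VA{\A})$ iff $\alpha \in \mathbf{L}(\lb\cdot\rb_\pa, \VA{\A})$, and since $\alpha$ was arbitrary the two collections coincide.

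There is essentially no obstacle beyond correctly invoking Theorem~\ref{theorem: importance of PA-assignment}; the one point worth flagging is that the argument relies on the designated set being $\{\one\}$, so that ``valid in $\VA{\A}$'' means ``receives value $\one$'', which is exactly the situation covered by that theorem. For a larger designated set one would instead need an agreement of $\lb\cdot\rb_\ba$ and $\lb\cdot\rb_\pa$ on membership in $D$, which is not what the Boolean case provides and is not needed here.
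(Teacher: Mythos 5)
Your proposal is correct and follows exactly the route the paper takes: the theorem is presented as an immediate consequence of Theorem \ref{theorem: importance of PA-assignment}, applied to each translated sentence $\tau(\alpha)$, with the designated set $\{\one\}$ ensuring that validity means receiving value $\one$. Your remark that the argument would not carry over to a larger designated set is a sensible observation and consistent with the paper's restriction of the statement to $D=\{\one\}$.
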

It is yet not known whether for any complete Boolean algebra $\A$, where the designated set contains only the top element, $\mathbf{L}(\lb\cdot\rb_\ba, \VA{\A})$ is the classical propositional logic. 
The result holds for any atomic Boolean algebra\footnote{A complete Boolean algebra $\A$ is called \emph{atomic} if the set of atoms of $\A$ is non-empty and for any element $a$ of the Boolean algebra, there exists a sebset $S$ of the set of all atoms of $\A$ such that $a = \bigvee S$.}. However, the atomicity is not a necessary condition for this \cite[Corollary 4.5]{illoyal}. Hence, for a complete atomic Boolean algebra $\A$, $\mathbf{L}(\lb\cdot\rb_\pa, \VA{\A})$ is the classical propositional logic.

\subsection{The logics of $\mathsf{Cobounded}$-algebra-valued models under $\lb \cdot \rb_\ba$ and $\lb \cdot \rb_\pa$}

We shall now prove that the logics of $\VA{\A}$, for any designated $\mathsf{Cobounded}$-algebra having at least two elements in the designated set, under both the assignment functions $\lb \cdot \rb_\ba$ and $\lb \cdot \rb_\pa$ are \emph{paraconsistent}: for a complete lattice $\A$, a designated set $D$, and an assignment function $\lb \cdot \rb$, the logic $\mathbf{L}(\lb\cdot\rb, \VA{\A})$ is said to be paraconsistent if there exist $\varphi, \psi \in \mathsf{Sent}$ such that $(\varphi \wedge \neg \varphi) \to \psi \notin \mathbf{L}(\lb\cdot\rb, \VA{\A})$.

\begin{lemma}\label{lemma: (T, BA) is paraconsistent}
There exists a formula $\varphi \in \mathsf{Sent}$ such that both
$\VA{\mathbb{A}, ~ \lb \cdot \rb_\ba} \models_D \varphi \wedge \neg \varphi$ and $\VA{\mathbb{A}, ~ \lb \cdot \rb_\pa} \models_D \varphi \wedge \neg \varphi$ hold, for any designated $\mathsf{Cobounded}$-algebra $(\mathbb{A},D)$, where $D$ contains at least two elements.
\end{lemma}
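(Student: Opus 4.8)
The plan is to produce, once and for all, a single closed formula $\varphi$ whose value is the \emph{same} designated-but-not-top element under both assignment functions, and then to exploit the definition of $^{*}$ on a designated $\mathsf{Cobounded}$-algebra. Since $\one\in D$ and $|D|\geq 2$, fix an element $a\in D\setminus\{\one\}$; as $\zero\notin D$ we have $\zero<a<\one$, and by Definition \ref{Definition: MTV algebra}(ii) we get $a^{*}=a$. Let $\varnothing$ denote the empty function, which lies in $\VA{\A}_1\subseteq\VA{\A}$, put $u:=\{\langle\varnothing,a\rangle\}\in\VA{\A}$, and take as witness the closed formula $\varphi:=(\varnothing\in u)$ of the extended language $\mathcal{L}_\A$.

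The verification I have in mind is essentially one line. First, $\lb\varnothing=\varnothing\rb_\ba=\lb\varnothing=\varnothing\rb_\pa=\one$, because in each case the defining expression is an empty conjunction. Next, observe that the clause interpreting $\in$ is literally the same in the definitions of $\lb\cdot\rb_\ba$ and $\lb\cdot\rb_\pa$ (they differ only in the recursive clause for $=$), and that in evaluating $\varnothing\in u$ that clause is invoked only on $\lb\varnothing=\varnothing\rb_X$; hence, for $X\in\{\ba,\pa\}$,
\[
\lb\varphi\rb_X=\lb\varnothing\in u\rb_X=\bigvee_{x\in\dom(u)}\bigl(u(x)\wedge\lb x=\varnothing\rb_X\bigr)=u(\varnothing)\wedge\lb\varnothing=\varnothing\rb_X=a\wedge\one=a.
\]
Therefore $\lb\varphi\wedge\neg\varphi\rb_X=\lb\varphi\rb_X\wedge\lb\varphi\rb_X^{*}=a\wedge a^{*}=a\wedge a=a\in D$, so both $\VA{\A,~\lb\cdot\rb_\ba}\models_D\varphi\wedge\neg\varphi$ and $\VA{\A,~\lb\cdot\rb_\pa}\models_D\varphi\wedge\neg\varphi$; taking moreover $\psi:=\bot$ (whose value is $\zero$) gives $\lb(\varphi\wedge\neg\varphi)\to\psi\rb_X=a\Rightarrow\zero=\zero\notin D$, which is the paraconsistency conclusion the subsection is after.

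I do not expect a real obstacle here: the only genuine point is that the ``contrapositive-augmented'' clause for $=$ in $\lb\cdot\rb_\pa$ leaves the value of $\varnothing\in\{\langle\varnothing,a\rangle\}$ untouched, so one and the same $\varphi$ serves both interpretations, and that hand-picking $a\in D\setminus\{\one\}$ forces $a^{*}=a$ no matter where $a$ sits in the lattice. The one thing worth a remark is the use of the parameter $u$: when $(\A,D)$ is in addition ultra-designated one may use the parameter-free sentence $\exists x\bigl(\exists y\,(y\in x)\wedge\neg\exists y\,(y\in x)\bigr)$ instead, whose value under either assignment function computes to $\bigvee(D\setminus\{\one\})$, an element of $D\setminus\{\one\}$ by Lemma \ref{lemma: ultrafilter of an MTV algebra}; for a general designated $\mathsf{Cobounded}$-algebra this supremum can escape $D$ (it may be a non-designated coatom), which is precisely why the hand-picked $a$, attached to an explicit name, is the robust choice for the statement as phrased.
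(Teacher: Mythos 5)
Your algebra is correct, and the engine of your argument --- fixing $a \in D\setminus\{\one\}$, using $a^{*}=a$ from Definition \ref{Definition: MTV algebra}(ii), and manufacturing a membership statement of value exactly $a$ via a name of the form $\{\langle w,a\rangle\}$ --- is precisely the engine of the paper's proof. The gap is in the choice of witness. The lemma asks for $\varphi\in\mathsf{Sent}$, and $\mathsf{Sent}$ is defined in \S 4 as the collection of sentences of the \emph{pure} language $\mathcal{L}_{\in}$, not of the extended language $\mathcal{L}_{\A}$; your $\varphi:=(\varnothing\in u)$ contains the names $\varnothing$ and $u$ as parameters, so it is not in $\mathsf{Sent}$. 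This is not a cosmetic point: the lemma is consumed by Theorem \ref{Theorem: models are paraconsistent}, where paraconsistency of $\mathbf{L}(\lb\cdot\rb,\VA{\A})$ must be witnessed by sentences in the range of a translation function $\tau:\mathsf{Prop}\to\mathsf{Sent}$, i.e.\ by parameter-free sentences.

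The repair is exactly the parameter-free sentence you propose and then discard in your closing paragraph, and the reason you give for discarding it is mistaken. Take the paper's witness $\varphi:=\exists x\exists y\,(x\in y\wedge\neg(x\in y))$. Its value is a join of terms $\lb u\in v\rb\wedge\lb u\in v\rb^{*}$, each of which is $\neq\one$ and hence $\leq c$, the coatom; and among the terms one finds your chosen $a$, by taking $v=\{\langle u,a\rangle\}$ so that the term equals $a\wedge a^{*}=a$. Thus $a\leq\lb\varphi\rb_X\leq c$. Since $D$ is a filter containing $a$, \emph{every} element above $a$ lies in $D$ --- in particular the coatom and in particular this join --- so a ``non-designated coatom'' simply cannot occur once $D\setminus\{\one\}\neq\varnothing$; no ultrafilter hypothesis is needed. (Your computation of the value as $\bigvee(D\setminus\{\one\})$ is also not quite right: non-designated $b\neq\zero$ contribute $b\wedge b^{*}=b\wedge\one=b$ to the join as well, so the value is $\bigvee(\mathbf{A}\setminus\{\one\})=c$, with $c^{*}=c\in D$; but even your claimed value would lie in $D$ by upward closure.) With that substitution your argument coincides with the paper's.
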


\begin{proof} 
Consider the sentence: $\varphi := \exists x \exists y (x \in y \wedge x \notin y )$ and an arbitrary designated $\mathsf{Cobounded}$-algebra $(\mathbb{A},D)$, where $D$ contains at least two elements. The proof will be done only for the assignment function $\lb \cdot \rb_\ba$ and the same proof will hold for the $\lb \cdot \rb_\pa$-assignment function as well.

For any two $u, v \in \VA{\A}$, $\lb u \in v \rb_\ba \wedge \lb u \in v \rb_\ba^* \neq \one$. The designated set $D$ contains elements other than $\one$.
For any element $a \in D \setminus \{\one\}$, consider two elements $ u,v \in \mathbf{V}^{\mathbb{(A)}}$ such that $u$ is arbitrary and $v= \{\langle u, a \rangle \}$. 
We readily calculate that $\lb u \in v \rb_\ba = a$, as well as $\lb u \in v \rb^*_\ba = a^* = a$. 
Hence, we can conclude that for any element $a \in D \setminus \{\one\}$, there exist two elements $ u,v \in \mathbf{V}^{\mathbb{(A)}}$ such that $\lb u \in v \rb_\ba \wedge \lb u \in v \rb^*_\ba = a$. 
This implies that $\lb \varphi \rb_\ba = c$, where $c$ is the coatom of $\A$.
Finally we can derive that 
\[\lb \varphi \rb_\ba = c = c^* = \lb \neg \varphi \rb_\ba \in D,\]
which completes the proof.
\end{proof}

\begin{teo}\label{Theorem: models are paraconsistent}
For any designated $\mathsf{Cobounded}$-algebra $(\A,D)$, where $D$ contains at least two elements, both the logics $\mathbf{L}(\lb\cdot\rb_\ba, \VA{\A})$ and $\mathbf{L}(\lb\cdot\rb_\pa, \VA{\A})$ are paraconsistent.
\end{teo}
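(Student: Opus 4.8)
The plan is to reduce the statement to Lemma \ref{lemma: (T, BA) is paraconsistent} together with the definition of the implication operator in a $\co$-algebra. Fix a designated $\co$-algebra $(\A,D)$ with $D$ containing at least two elements, and let $\varphi := \exists x \exists y (x \in y \wedge x \notin y)$ be the sentence produced in Lemma \ref{lemma: (T, BA) is paraconsistent}; since the hypothesis on $D$ is exactly that $D$ has at least two elements, that lemma applies and gives $\lb \varphi \wedge \neg\varphi \rb_\ba \in D$ as well as $\lb \varphi \wedge \neg\varphi \rb_\pa \in D$. Because $D$ is a proper filter we have $\zero \notin D$, so in particular $\lb \varphi \wedge \neg\varphi \rb_X \neq \zero$ for $X \in \{\ba,\pa\}$.

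Next I would pick a sentence whose value is exactly $\zero$: take $\psi := \bot$, so that $\lb \psi \rb_X = \zero$ under either assignment function. By clause (iv) in the definition of a $\co$-algebra (the implication is $\zero$ precisely when the antecedent is nonzero and the consequent is $\zero$), it then follows that, for $X \in \{\ba,\pa\}$,
\[
\lb (\varphi \wedge \neg\varphi) \to \psi \rb_X \;=\; \lb \varphi \wedge \neg\varphi \rb_X \Rightarrow \zero \;=\; \zero,
\]
using that $\lb \varphi \wedge \neg\varphi \rb_X \neq \zero$. Hence $\lb (\varphi \wedge \neg\varphi) \to \psi \rb_X = \zero \notin D$.

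Finally, to conclude that this failure is visible at the level of the propositional logic $\mathbf{L}(\lb\cdot\rb_X, \VA{\A})$, consider the propositional formula $(p \wedge \neg p) \to q$ with $p, q$ distinct propositional variables, and a translation function $\tau : \mathsf{Prop} \to \mathsf{Sent}$ with $\tau(p) = \varphi$ and $\tau(q) = \bot$ (chosen arbitrarily on the other variables). Then $\tau((p\wedge\neg p)\to q) = (\varphi\wedge\neg\varphi)\to\bot$, whose value under $\lb\cdot\rb_X$ is $\zero \notin D$ by the previous step; therefore $(p\wedge\neg p)\to q \notin \mathbf{L}(\lb\cdot\rb_X, \VA{\A})$, which is exactly paraconsistency. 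As the whole argument is uniform in $X$, it settles both $\mathbf{L}(\lb\cdot\rb_\ba, \VA{\A})$ and $\mathbf{L}(\lb\cdot\rb_\pa, \VA{\A})$ at once. There is no real obstacle here beyond Lemma \ref{lemma: (T, BA) is paraconsistent}: the remaining work is unwinding the definition of $\mathbf{L}(\lb\cdot\rb_X, \VA{\A})$, and the only subtlety is that the conditional collapses to $\zero$, which requires both that $\psi$ has value exactly $\zero$ (not merely undesignated) and that $\lb\varphi\wedge\neg\varphi\rb_X\neq\zero$, the latter being just the properness of the filter $D$.
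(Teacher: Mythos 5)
Your proof is correct and follows essentially the same route as the paper's: both invoke Lemma \ref{lemma: (T, BA) is paraconsistent} for the antecedent and then kill the conditional by pairing it with a consequent of value $\zero$ (the paper uses $\psi := \neg\forall x(x=x)$ where you use $\bot$, an immaterial difference). Your version is in fact marginally cleaner in two respects: you only need $\lb\varphi\wedge\neg\varphi\rb_X \in D$ (hence $\neq\zero$) rather than the exact coatom value, and you make the translation-function bookkeeping explicit, which the paper's proof glosses over.
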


\begin{proof}
Consider an arbitrary designated $\mathsf{Cobounded}$-algebra $(\A,D)$, where $D$ contains at least two elements and suppose $\varphi$ is the same formula used in the proof of Lemma \ref{lemma: (T, BA) is paraconsistent}. 
Consider the formula $\psi := \neg \forall x (x = x)$.
Clearly, $\lb \psi \rb_\ba = \zero = \lb \psi \rb_\pa$, and we already know that $\lb \varphi \rb_\ba = c = \lb \varphi \rb_\pa$, where $c$ is the coatom of $\A$. Hence, 
\[\lb (\varphi \wedge \neg \varphi) \to \psi \rb_\ba = \zero = \lb (\varphi \wedge \neg \varphi) \to \psi \rb_\pa.\]
So we can conclude that, there exist formulas $\varphi$ and $\psi$ in $\mathcal{L}_\in$ such that the formula 
\[(\varphi \wedge \neg \varphi) \to \psi \notin \mathbf{L}(\lb\cdot\rb_\ba, \VA{\A}), \mathbf{L}(\lb\cdot\rb_\pa, \VA{\A}).\]

\end{proof}

\section{The quotient space $\VA{\A}/\!\sim$}

In order to simplify notation, in this section we drop the subscript from the assignment function $\lb\cdot\rb_\pa$ and we will denote it only by $\lb \cdot \rb$. Consequently, the validity of a sentence $\varphi$ in a designated $\mathsf{Cobounded}$-algebra-valued model $\VA{\A}$ will be denoted by $\VA{\A} \models \varphi$ instead of $\VA{\A, ~\lb \cdot \rb_\pa} \models \varphi$.

\begin{defi}
A class relation $\sim$ in $\VA{\A}$ is defined as follows $u \sim v$ iff $\VA{\A} \models u = v$, for $u, v \in \VA{\A}$.
\end{defi}

\begin{observation} The relation $ ``\sim"$ is an equivalence relation on $\VA{\A}$.
\end{observation}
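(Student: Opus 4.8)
The plan is to verify the three defining properties of an equivalence relation---reflexivity, symmetry, and transitivity---directly from the results already established about $\lb \cdot \rb_\pa$ on an ultra-designated $\mathsf{Cobounded}$-algebra $(\A, D)$. Recall that, under $\lb \cdot \rb_\pa$, for any $u, v \in \VA{\A}$ the value $\lb u = v \rb$ is always either $\one$ or $\zero$, so $u \sim v$ is equivalent to $\lb u = v \rb = \one$, equivalently $\lb u = v \rb \in D$.

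First I would handle \emph{reflexivity}: for any $u \in \VA{\A}$, Lemma~\ref{PropertiesTvaluedmodels}$(i)$ gives $\lb u = u \rb_\pa \in D$, hence $u \sim u$. Next, \emph{transitivity}: suppose $u \sim v$ and $v \sim w$, so $\lb u = v \rb \in D$ and $\lb v = w \rb \in D$. Since $D$ is a filter, $\lb u = v \rb \wedge \lb v = w \rb \in D$, and Lemma~\ref{PropertiesTvaluedmodels}$(iii)$ then yields $\lb u = w \rb \in D$, i.e.\ $u \sim w$. For \emph{symmetry}, assume $u \sim v$, so $\lb u = v \rb \in D$; I would apply Leibniz's Law, Theorem~\ref{theorem: validity of LL} (via Corollary~\ref{corrollary: validity of LL in W+}), to the formula $\varphi(x) := (x = u)$. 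Since $\lb u = v \rb \wedge \lb \varphi(u) \rb = \lb u = v \rb \wedge \lb u = u \rb \in D$ by reflexivity, we get $\lb \varphi(v) \rb = \lb v = u \rb \in D$, hence $v \sim u$. Alternatively, symmetry follows by combining reflexivity with transitivity in the standard way, or directly from the characterization in Theorem~\ref{theorem: equality corresponding to paraconsistent assignment function}, whose condition is manifestly symmetric in $u$ and $v$.

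I do not anticipate a genuine obstacle here: every ingredient is a lemma proved earlier in the paper, and the argument is the routine bookkeeping that the interpretation of $=$ behaves like an equivalence relation once Leibniz's Law and the congruence-type properties of Lemma~\ref{PropertiesTvaluedmodels} are in hand. The only point requiring a moment's care is to note explicitly that $\lb u = v \rb_\pa$ takes only the values $\one$ and $\zero$, so that membership in $D$ coincides with being equal to $\one$; this is what lets us pass freely between ``$\lb u = v \rb \in D$'' and ``$\VA{\A} \models u = v$'' and thereby invoke the filter property of $D$ to combine the two hypotheses in the transitivity step.
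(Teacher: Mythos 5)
Your proof is correct and takes essentially the same route as the paper: the paper's own argument is a one-line reduction to the two-valuedness of $\lb u=v\rb_\pa$ together with Theorem~\ref{theorem: equality corresponding to paraconsistent assignment function}, while you make the three verifications explicit via Lemma~\ref{PropertiesTvaluedmodels}(i),(iii) and Corollary~\ref{corrollary: validity of LL in W+}, all of which are consequences of that same theorem. One minor caveat: your parenthetical remark that symmetry ``follows by combining reflexivity with transitivity in the standard way'' is not valid (reflexive plus transitive only yields a preorder), but this is harmless since your two primary arguments for symmetry --- Leibniz's Law applied to $\varphi(x):=(x=u)$, and the manifestly symmetric characterization in Theorem~\ref{theorem: equality corresponding to paraconsistent assignment function} (or, even more directly, the symmetry of the defining expression for $\lb u=v\rb_\pa$) --- are both sound.
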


\begin{proof}
The proof follows from the fact that for any $u, v \in \VA{\A}$, either $\lb u = v \rb_\pa = \one$ or $\lb u = v \rb_\pa = \zero$ and Theorem \ref{theorem: equality corresponding to paraconsistent assignment function}.
\end{proof}

Once we prove that in a designated $\textsf{Cobounded}$-algebra-valued model $\VA{\A}$ the relation $\sim$ is an equivalence relation, then, we can construct the \emph{quotient space} $\VA{\A}/\!\sim$, as in the classical case. 
The elements of $\VA{\A}/\!\sim$ are of the form $[u]= \{v \in \VA{\A}:\VA{\A} \models u=v\}$, where $u \in \VA{\A}$. Moreover, given the validity of $\llaw$ in $\VA{\A}$, we can define a notion of satisfaction and validity of a formula in $\VA{\A}/\!\sim$ as follows:

\begin{defi}\label{Definition: semantics of the quotient space} Let $(\mathbb{A},D)$ be an ultra-designated $\mathsf{Cobounded}$-algebra. Suppose $\varphi(x_1, \ldots, x_n)$ is a formula in $\mathcal{L}_{\in}$ having the list (possibly empty) of free variables $x_1, \ldots, x_n$, 
then for any $u_1, \ldots, u_n \in \VA{\A}$, the sequence $([u_1], \ldots, [u_n])$ of $\VA{\A}/\!\sim$ is said to satisfy the formula $\varphi(x_1, \ldots, x_n)$, denoted by $\VA{\A}/\!\sim~ \models \varphi([u_1], \ldots, [u_n])$, 
if $\VA{\A} \models \varphi(u_1, \ldots, u_n)$ holds. A formula is said to be valid in $\VA{\A}/\!\sim$ if it is satisfied by every sequence of $\VA{\A}/\!\sim$.
\end{defi}

\begin{observation}
Using Theorem \ref{theorem: validity of LL}, it can be checked that Definition \ref{Definition: semantics of the quotient space} does not depend on the representatives.
\end{observation}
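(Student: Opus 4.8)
The plan is to show that the value in $\VA{\A}$ of a formula evaluated at names $u_1,\dots,u_n$ is insensitive to replacing any $u_i$ by a $\sim$-equivalent name; well-definedness of Definition~\ref{Definition: semantics of the quotient space} is then immediate, since $u_i \sim u_i'$ is exactly $[u_i] = [u_i']$. As all representatives can be changed one coordinate at a time, it suffices to fix $i$, assume $u_i \sim u_i'$, and prove
\[
\VA{\A} \models \varphi(u_1,\dots,u_i,\dots,u_n) \quad\Longleftrightarrow\quad \VA{\A} \models \varphi(u_1,\dots,u_i',\dots,u_n).
\]
Freezing the names $u_j$ with $j \neq i$ as parameters turns $\varphi$ into a formula $\psi(x):=\varphi(u_1,\dots,u_{i-1},x,u_{i+1},\dots,u_n)$ of $\mathcal{L}_\A$ with the single free variable $x$; this is legitimate because $\mathcal{L}_\A$ contains a constant for each element of $\VA{\A}$.

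Now suppose $\VA{\A} \models \varphi(u_1,\dots,u_i,\dots,u_n)$, i.e.\ $\lb \psi(u_i) \rb \in D$. Since $u_i \sim u_i'$ gives $\lb u_i = u_i' \rb \in D$ and $D$ is a filter, we have $\lb u_i = u_i' \rb \wedge \lb \psi(u_i) \rb \in D$, so Corollary~\ref{corrollary: validity of LL in W+} (the pointwise form of the validity of $\llaw$ established in Theorem~\ref{theorem: validity of LL}) yields $\lb \psi(u_i') \rb \in D$, that is $\VA{\A} \models \varphi(u_1,\dots,u_i',\dots,u_n)$. The relation $\sim$ is symmetric (directly from its definition, or because for any two names the value of $=$ under $\lb \cdot \rb$ is $\one$ or $\zero$), so interchanging $u_i$ and $u_i'$ gives the converse. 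This settles the single-coordinate case.

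Finally, given $u_j \sim u_j'$ for all $j \le n$, pass from $(u_1,\dots,u_n)$ to $(u_1',u_2,\dots,u_n)$, then to $(u_1',u_2',u_3,\dots,u_n)$, and so on; at each step the previous paragraph applies, the entries already replaced and those not yet replaced all serving as parameters. After $n$ steps we obtain $\VA{\A} \models \varphi(u_1,\dots,u_n)$ iff $\VA{\A} \models \varphi(u_1',\dots,u_n')$, so the clause ``$([u_1],\dots,[u_n])$ satisfies $\varphi$'' in Definition~\ref{Definition: semantics of the quotient space} does not depend on the chosen representatives. The only step that uses the full strength of Leibniz's law is the single-coordinate replacement; everything else is bookkeeping, so there is no real obstacle here beyond checking that substituting names for the remaining free variables yields a genuine one-free-variable formula of $\mathcal{L}_\A$ to which Corollary~\ref{corrollary: validity of LL in W+} applies.
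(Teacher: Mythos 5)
Your proof is correct and follows exactly the route the paper intends: the paper leaves this as a one-line remark, and your argument — freezing the other names as parameters, applying Corollary \ref{corrollary: validity of LL in W+} to the resulting one-free-variable formula, and iterating coordinate by coordinate using the symmetry of $\sim$ — is the standard verification it has in mind. No gaps.
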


 By the definition of validity in $\VA{\A}/\!\sim$, a sentence $\varphi$ of $\mathcal{L}_\in$ is valid in $\VA{\A}/\!\sim$ if and only if it is valid in the algebra-valued model $\VA{\A}$, i.e.,  $\VA{\A}/\!\sim ~\models \varphi$ iff $\VA{\A} \models \varphi$. Then, as a corollary of Theorem \ref{PAZF}, we get the following theorem.

\begin{teo}
Let $(\A,D)$ be an ultra-designated $\mathsf{Cobounded}$-algebra. Then, $\VA{\A}/\!\sim ~ \models \overline{\ZF}$.
\end{teo}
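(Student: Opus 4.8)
The plan is to obtain this as a direct corollary of Theorem \ref{PAZF} together with the semantics for the quotient space fixed in Definition \ref{Definition: semantics of the quotient space}. The key point is the equivalence noted in the paragraph preceding the statement: for a \emph{sentence} $\varphi$ of $\mathcal{L}_\in$ one has $\VA{\A}/\!\sim ~\models \varphi$ if and only if $\VA{\A,~\lb\cdot\rb_\pa}\models_D \varphi$. Indeed, a sentence has an empty list of free variables, so by Definition \ref{Definition: semantics of the quotient space} its validity in $\VA{\A}/\!\sim$ unwinds directly to $\VA{\A}\models\varphi$, with no dependence on representatives to check.

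The argument then runs as follows. Every axiom, and every instance of every axiom scheme, of $\overline{\ZF}$ is a sentence of $\mathcal{L}_\in$. By Theorem \ref{PAZF}, for any ultra-designated $\mathsf{Cobounded}$-algebra $(\A,D)$ we have $\VA{\A,~\lb\cdot\rb_\pa}\models_D \mathsf{Ax}$ for each such axiom $\mathsf{Ax}$. Applying the equivalence above to each $\mathsf{Ax}$ gives $\VA{\A}/\!\sim ~\models \mathsf{Ax}$, and since this holds for every axiom of $\overline{\ZF}$ we conclude $\VA{\A}/\!\sim ~\models \overline{\ZF}$.

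There is essentially no obstacle: the whole content already sits in Theorem \ref{PAZF}. The only fact that needs to be in place beforehand is that Definition \ref{Definition: semantics of the quotient space} is a sound definition, i.e.\ that for a formula $\varphi(x_1,\dots,x_n)$ with free variables, satisfaction of $\varphi([u_1],\dots,[u_n])$ is independent of the choice of the representatives $u_i$. This is exactly where the validity of $\llaw$ in $\VA{\A,~\lb\cdot\rb_\pa}$, established in Theorem \ref{theorem: validity of LL}, enters: replacing a $u_i$ by an $\sim$-equivalent $u_i'$ preserves membership of $\lb\varphi\rb_\pa$ in $D$, by an $n$-fold application of Leibniz's law. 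Since this is already recorded in the Observation following Definition \ref{Definition: semantics of the quotient space}, nothing further needs verifying, and the theorem follows.
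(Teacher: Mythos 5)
Your proposal is correct and matches the paper's own argument: the paper likewise derives this theorem as an immediate corollary of Theorem \ref{PAZF}, using the observation that a sentence of $\mathcal{L}_\in$ is valid in $\VA{\A}/\!\sim$ if and only if it is valid in $\VA{\A}$ under $\lb\cdot\rb_\pa$, with the well-definedness of Definition \ref{Definition: semantics of the quotient space} secured by $\llaw$ (Theorem \ref{theorem: validity of LL}). Nothing is missing.
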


In the case of building quotient models out of Boolean-valued models, first the interpretation of the predicate symbols $\in$ and $=$ is given and only then a definition of the validity of the rest of the formulas is provided. 
Contrary to this order, in this paper, we first define the validity of all formulas in Definition \ref{Definition: semantics of the quotient space} and only later we provide the full interpretation of $\in$ and $=$, which includes also the case of non-validity.

\begin{defi}\label{Definition: Quotient space identity and belongniness} 
We define the following binary relations on $\VA{\A}/\!\sim$. For any $u, v \in \VA{\A}$,
\begin{align*}
        (i) &  \left \{ \begin{array}{ll}
                        R_{=}([u], [v]) & \mbox{iff  }~ \VA{\A} \models u=v, \\
                        R_{\neq}([u], [v]) & \mbox{iff  }~ \VA{\A} \models \neg (u=v),
                      \end{array}\right.\\
     (ii) &  \left \{\begin{array}{ll}
                       R_{\in}([u], [v]) & \mbox{iff  }~ \VA{\A} \models u \in v,  \\
                        R_{\notin}([u], [v]) & \mbox{iff  }~ \VA{\A} \models \neg (u \in v). 
                      \end{array}\right.
\end{align*}

\end{defi}

\begin{observation}
Applying Theorem \ref{theorem: validity of LL}, one can prove that the relations $R_{=}([u], [v])$, $R_{\neq}([u], [v])$, $R_{\in}([u], [v])$, and $R_{\notin}([u], [v])$ are well-defined.
\end{observation}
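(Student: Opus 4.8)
The plan is to reduce all four statements to a single invariance fact. Each of the relations in Definition \ref{Definition: Quotient space identity and belongniness} has the shape ``$\VA{\A}\models\chi(u,v)$'' for one fixed binary formula $\chi(x,y)$ of $\mathcal{L}_\in$, namely $x=y$, $\neg(x=y)$, $x\in y$, and $\neg(x\in y)$ respectively. Hence it suffices to prove: for \emph{any} formula $\chi(x,y)$ of $\mathcal{L}_\in$ with free variables among $x,y$, if $u\sim u'$ and $v\sim v'$ then $\VA{\A}\models\chi(u,v)$ iff $\VA{\A}\models\chi(u',v')$. Recall that $\sim$ is an equivalence relation on $\VA{\A}$ by the preceding Observation, and that $u\sim u'$ means exactly $\lb u=u'\rb\in D$ (equivalently $\lb u=u'\rb=\one$).

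The argument is then two applications of Leibniz's Law in the operational form of Corollary \ref{corrollary: validity of LL in W+} (which is the content of Theorem \ref{theorem: validity of LL}), one per coordinate. First I would fix $v$ and set $\varphi(x):=\chi(x,v)$; since $\mathcal{L}_\A$ contains a constant for $v$, this is a formula of $\mathcal{L}_\A$ whose only free variable is $x$, so Corollary \ref{corrollary: validity of LL in W+} applies. Assuming $\VA{\A}\models\chi(u,v)$ we have $\lb\varphi(u)\rb\in D$, and from $u\sim u'$ we have $\lb u=u'\rb\in D$; therefore $\lb u=u'\rb\wedge\lb\varphi(u)\rb\in D$, and the Corollary yields $\lb\varphi(u')\rb=\lb\chi(u',v)\rb\in D$, i.e.\ $\VA{\A}\models\chi(u',v)$. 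Next I would fix $u'$ and set $\psi(y):=\chi(u',y)$, again a one-free-variable formula of $\mathcal{L}_\A$; from $v\sim v'$ and the just-established $\lb\psi(v)\rb=\lb\chi(u',v)\rb\in D$ we get $\lb v=v'\rb\wedge\lb\psi(v)\rb\in D$, so Corollary \ref{corrollary: validity of LL in W+} gives $\lb\psi(v')\rb=\lb\chi(u',v')\rb\in D$, i.e.\ $\VA{\A}\models\chi(u',v')$. The reverse implication follows by running the same two steps with $(u,v)$ and $(u',v')$ interchanged, which is permitted because $\sim$ is symmetric. Specialising $\chi$ to $x=y$, $\neg(x=y)$, $x\in y$, $\neg(x\in y)$ then yields well-definedness of $R_=$, $R_{\neq}$, $R_{\in}$, $R_{\notin}$.

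I do not expect a genuine obstacle here: the only point needing a moment's care is the legitimacy of treating $\chi(x,v)$ and $\chi(u',y)$ as one-free-variable formulas of the extended language $\mathcal{L}_\A$ to which Corollary \ref{corrollary: validity of LL in W+} can be applied --- which is precisely the reason $\mathcal{L}_\A$ was equipped with a constant for every element of $\VA{\A}$ --- together with the bookkeeping that the four relations are exactly the instances of the single binary-formula statement, and that the ``$\neg$'' cases require no special treatment because $\neg$ is just another logical connective occurring in $\chi$.
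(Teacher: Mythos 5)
Your proof is correct and is exactly the elaboration the paper intends: the paper gives no details beyond citing Theorem \ref{theorem: validity of LL}, and your coordinate-wise application of Corollary \ref{corrollary: validity of LL in W+} (treating $\chi(x,v)$ and $\chi(u',y)$ as one-free-variable formulas of $\mathcal{L}_\A$ via the constants for elements of $\VA{\A}$) is the standard way to carry it out. The uniform treatment of all four relations as instances of a single binary formula $\chi$ is a clean piece of bookkeeping and introduces no gap.
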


As expected, the interpretations of the predicate symbols $=$ and $\in$ in the quotient space $\VA{\A}/\!\sim$ are the binary relations $R_=$ and $R_\in$, respectively. 
But, the construction differs from the classical case when non-satisfaction is considered. 
In Theorem \ref{theorem: equality and its negation in the quotient space} and Theorem \ref{theorem: belongingness and its negation in the quotient space}, we discuss this issue in full detail. 

To keep the expressions simple, for any two elements $[u], [v] \in \VA{\A}/\!\sim$ we shall abbreviate $(x = y)([u], [v])$, $\neg(x = y)([u], [v])$, $(x \in y)([u], [v])$, and $\neg(x = y)([u], [v])$ as $[u] = [v]$, $[u] \neq [v]$, $[u] \in [v]$, and $[u] \notin [v]$, respectively. Then, using Definition \ref{Definition: semantics of the quotient space} and Definition \ref{Definition: Quotient space identity and belongniness} we get the following fact.

\begin{fatto}\label{fact: satisfactions of atomic formulas}
For any $[u], [v] \in \VA{\A}/\!\sim$,
\begin{align*}
        (i) &  \left \{ \begin{array}{ll}
                        \VA{\A}/\!\sim~ \models [u] = [v] & \mbox{iff }  R_{=}([u], [v]) \\
                        \VA{\A}/\!\sim~ \models [u] \neq [v] & \mbox{iff }  R_{\neq}([u], [v]);
                      \end{array}\right.\\
     (ii) &  \left \{\begin{array}{ll}
                        \VA{\A}/\!\sim~ \models [u] \in  [v] & \mbox{iff }  R_{\in}([u], [v]);  \\
                         \VA{\A}/\!\sim~ \models [u] \notin [v] & \mbox{iff } R_{\notin}([u], [v]); 
                      \end{array}\right.
\end{align*}
\end{fatto}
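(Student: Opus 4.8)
The plan is to unfold the two relevant definitions and observe that each of the four biconditionals is a literal identity of statements about the algebra-valued model $\VA{\A}$, so that no computation is required.

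First I would treat part (i). For the first clause, the formula in play is the atomic $x_1 = x_2$, whose evaluation at $([u],[v])$ we have agreed to abbreviate by $[u]=[v]$. By Definition \ref{Definition: semantics of the quotient space}, $\VA{\A}/\!\sim~\models [u]=[v]$ holds exactly when $\VA{\A} \models u=v$; by Definition \ref{Definition: Quotient space identity and belongniness}(i), $R_{=}([u],[v])$ holds exactly when $\VA{\A} \models u=v$. Hence the two are the same condition. The second clause is symmetric: with the formula $\neg(x_1=x_2)$ and the relation $R_{\neq}$, both $\VA{\A}/\!\sim~\models [u]\neq[v]$ and $R_{\neq}([u],[v])$ unwind to $\VA{\A} \models \neg(u=v)$.

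Part (ii) I would handle identically, now using the formulas $x_1 \in x_2$ and $\neg(x_1 \in x_2)$ together with Definition \ref{Definition: Quotient space identity and belongniness}(ii): $\VA{\A}/\!\sim~\models [u]\in[v]$ iff $\VA{\A} \models u \in v$ iff $R_{\in}([u],[v])$, and likewise $\VA{\A}/\!\sim~\models [u]\notin[v]$ iff $\VA{\A} \models \neg(u\in v)$ iff $R_{\notin}([u],[v])$.

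The only point needing a word of care --- and the closest thing here to an obstacle --- is that all four statements concern equivalence classes, not names, so one must know that both the satisfaction relation of Definition \ref{Definition: semantics of the quotient space} and the relations of Definition \ref{Definition: Quotient space identity and belongniness} are independent of the chosen representatives $u \in [u]$, $v \in [v]$. This is precisely the content of the two Observations preceding the statement, which in turn rest on Theorem \ref{theorem: validity of LL} (validity of $\llaw$ under $\lb\cdot\rb_\pa$): if $u \sim u'$ and $v \sim v'$, then $\lb u = v \rb_\pa \in D$ iff $\lb u' = v' \rb_\pa \in D$, and analogously for $\in$ and for the negated atomic formulas. Granting well-definedness, the Fact follows with no further argument.
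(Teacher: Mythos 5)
Your proposal is correct and matches the paper's treatment: the paper states this Fact without proof, introducing it with the remark that it follows directly from Definition \ref{Definition: semantics of the quotient space} and Definition \ref{Definition: Quotient space identity and belongniness}, which is exactly the unfolding you carry out. Your additional remark on representative-independence via the preceding Observations is a reasonable piece of due diligence that the paper leaves implicit.
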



\begin{teo}\label{theorem: equality and its negation in the quotient space}
For any ultra-designated $\co$-algebra $(\A, D)$ and a pair of elements $[u]$ and $[v]$ of $\VA{\A}/\!\sim$,
\[\VA{\A}/\!\sim~ \models  [u] \neq [v] \mbox{ iff }~ \VA{\A}/\!\sim~ \not\models [u] = [v].\]
\end{teo}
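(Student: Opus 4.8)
The plan is to translate the statement into algebra-valued terms using Definition \ref{Definition: semantics of the quotient space} and Fact \ref{fact: satisfactions of atomic formulas}, and then exploit the crucial two-valuedness of $\lb u = v \rb_\pa$ in ultra-designated $\mathsf{Cobounded}$-algebras. Concretely, $\VA{\A}/\!\sim~\models [u]\neq[v]$ means, by Fact \ref{fact: satisfactions of atomic formulas}(i), that $R_{\neq}([u],[v])$ holds, which by Definition \ref{Definition: Quotient space identity and belongniness}(i) means $\VA{\A}\models \neg(u=v)$, i.e.\ $\lb u = v\rb_\pa^{\,*}\in D$. Similarly, $\VA{\A}/\!\sim~\not\models[u]=[v]$ unpacks to $\VA{\A}\not\models u=v$, i.e.\ $\lb u=v\rb_\pa\notin D$. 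So the theorem reduces to the purely algebraic equivalence: $\lb u=v\rb_\pa^{\,*}\in D$ iff $\lb u=v\rb_\pa\notin D$.

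First I would invoke the observation (stated just before Theorem \ref{theorem: equality corresponding to paraconsistent assignment function}) that for any ultra-designated $\mathsf{Cobounded}$-algebra and any $u,v\in\VA{\A}$, either $\lb u=v\rb_\pa=\one$ or $\lb u=v\rb_\pa=\zero$. This is the key feature that makes equality behave classically in these models. Then I would simply do a two-case split on this value. If $\lb u=v\rb_\pa=\one$, then by the definition of $^*$ in a designated $\mathsf{Cobounded}$-algebra (Definition \ref{Definition: MTV algebra}(ii)) we have $\lb u=v\rb_\pa^{\,*}=\one^*=\zero\notin D$; and on the other side $\one\in D$, so $\lb u=v\rb_\pa\in D$, i.e.\ the right-hand side $\VA{\A}/\!\sim~\not\models[u]=[v]$ fails. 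Hence both sides of the biconditional are false. If instead $\lb u=v\rb_\pa=\zero$, then $\lb u=v\rb_\pa^{\,*}=\zero^*=\one\in D$, so the left-hand side holds; and $\zero\notin D$ so $\VA{\A}\not\models u=v$, meaning the right-hand side holds as well. Hence both sides are true. In either case the biconditional holds.

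The main (very mild) obstacle is bookkeeping: making sure the translations through Definition \ref{Definition: semantics of the quotient space}, Definition \ref{Definition: Quotient space identity and belongniness}, and Fact \ref{fact: satisfactions of atomic formulas} are applied correctly so that $[u]\neq[v]$ in the quotient really does correspond to $\lb u=v\rb_\pa^{\,*}\in D$ rather than to something like $\lb u=v\rb_\pa\notin D$ directly — the whole point of the theorem is that these two a priori different conditions coincide here, and coincide precisely because of the two-valuedness of $\lb\cdot=\cdot\rb_\pa$. I do not expect to need Theorem \ref{theorem: validity of LL} beyond what is already baked into the well-definedness of $R_{\neq}$; the argument is essentially a direct computation once the unpacking is done. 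I would present it compactly as the two-case analysis above, citing Definition \ref{Definition: MTV algebra}(ii) for the behaviour of $^*$ on $\one$ and $\zero$ and the pre-Theorem \ref{theorem: equality corresponding to paraconsistent assignment function} observation for two-valuedness.
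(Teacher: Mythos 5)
Your proposal is correct and follows essentially the same route as the paper: both arguments unpack the quotient-level statements via Definition \ref{Definition: Quotient space identity and belongniness} and Fact \ref{fact: satisfactions of atomic formulas}, and then reduce everything to the two-valuedness of $\lb u = v\rb_\pa$ together with $\one^* = \zero$ and $\zero^* = \one$. The paper merely writes this as a single chain of equivalences rather than your two-case split, which is a cosmetic difference.
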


\begin{proof}
Consider any two elements $[u], [v]$ of $\VA{\A}/\!\sim$. Then,
\begin{align*}
    \VA{\A}/\!\sim~ \not\models [u] = [v] & \mbox{ iff}~([u], [v]) \notin R_=, & \mbox{by Fact} ~\ref{fact: satisfactions of atomic formulas}(i)\\
    &\mbox{ iff}~\VA{\A} \not\models u = v, & \mbox{by Definition}~ \ref{Definition: Quotient space identity and belongniness}(i)\\
    &\mbox{ iff}~ \lb u = v \rb = \zero, & \mbox{since } \lb u=v \rb \in \{\zero, \one\} \\
    &\mbox{ iff} ~\VA{\A} \models \neg(u = v), & \mbox{ since $\zero^* = \one \in D$}\\
    &\mbox{ iff}~\VA{\A}/\!\sim~ \models  [u] \neq [v], & \mbox{ by Definition \ref{Definition: Quotient space identity and belongniness}(i) and Fact \ref{fact: satisfactions of atomic formulas}(i).}
\end{align*}
\end{proof}

As a corollary we obtain that the two (class) relations $R_=$ and $R_{\neq}$ are disjoint. Moreover, it is easy to see that $R_=^c$, the complement of $R_=$, is the class $R_{\neq}$. On the other hand, the two class relations $R_\in$ and $R_{\notin}$ are exhaustive but not necessarily exclusive. This is the content of the following theorem.

\begin{teo}\label{theorem: belongingness and its negation in the quotient space}
Let $(\A, D)$ be an ultra-designated $\co$-algebra. Then,
\begin{enumerate}
    \item[(i)] for any $[u], [v] \in \VA{\A}/\!\sim$, if $\VA{\A}/\!\sim~ \not\models [u] \in  [v]$, then $\VA{\A}/\!\sim~ \models [u] \notin  [v]$,
    \item[(ii)] if $D\setminus \{\one\} \neq \varnothing$ then there exist $[u], [v] \in \VA{\A}/\!\sim$ such that $\VA{\A}/\!\sim~ \models [u] \in  [v]$ and $\VA{\A}/\!\sim~ \models [u] \notin  [v]$ both hold.
\end{enumerate}
\end{teo}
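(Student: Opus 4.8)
The plan is to unwind both statements through Fact \ref{fact: satisfactions of atomic formulas}(ii) and Definition \ref{Definition: Quotient space identity and belongniness}(ii), reducing everything to computations of $\lb u \in v \rb_\pa$ and $\lb u \in v \rb_\pa^*$ inside the algebra-valued model $\VA{\A}$, exactly as was done for equality in Theorem \ref{theorem: equality and its negation in the quotient space}. The crucial algebraic input is the definition of $^*$ in a designated $\co$-algebra (Definition \ref{Definition: MTV algebra}): if $a \notin D$ then $a^* = \one \in D$, and more relevantly if $a \in D \setminus \{\one\}$ then $a^* = a \in D$.

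For part (i), I would argue as follows. Suppose $\VA{\A}/\!\sim~ \not\models [u] \in [v]$. By Fact \ref{fact: satisfactions of atomic formulas}(ii) and Definition \ref{Definition: Quotient space identity and belongniness}(ii), this means $\VA{\A} \not\models u \in v$, i.e.\ $\lb u \in v \rb_\pa \notin D$. By the definition of $^*$ in a designated $\co$-algebra, $a \notin D$ implies $a^* = \one$ (indeed $\mathbf{A}\setminus D$ can contain only values sent to $\one$ by $^*$; in the ultrafilter case, by Lemma \ref{lemma: ultrafilter of an MTV algebra}, $a = \zero$ and $\zero^* = \one$). Hence $\lb \neg(u \in v)\rb_\pa = \lb u \in v\rb_\pa^* = \one \in D$, so $\VA{\A} \models \neg(u \in v)$, which by Definition \ref{Definition: Quotient space identity and belongniness}(ii) and Fact \ref{fact: satisfactions of atomic formulas}(ii) gives $\VA{\A}/\!\sim~ \models [u] \notin [v]$.

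For part (ii), I would reuse the witnesses from the proof of Lemma \ref{lemma: (T, BA) is paraconsistent}. Fix $a \in D \setminus \{\one\}$, take $u \in \VA{\A}$ arbitrary and $v = \{\langle u, a\rangle\}$. As computed there, $\lb u \in v \rb_\pa = a \in D$, so by Fact \ref{fact: satisfactions of atomic formulas}(ii) we get $\VA{\A}/\!\sim~ \models [u] \in [v]$. But also $\lb \neg(u \in v)\rb_\pa = \lb u \in v\rb_\pa^* = a^* = a \in D$ (using $a \in D\setminus\{\one\}$), so $\VA{\A} \models \neg(u \in v)$, hence $\VA{\A}/\!\sim~ \models [u] \notin [v]$ as well. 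One small point to double-check is that the computation $\lb u \in v\rb_\pa = a$ for $v = \{\langle u, a\rangle\}$ goes through under $\lb\cdot\rb_\pa$ and not just $\lb\cdot\rb_\ba$: since $\lb u \in v\rb_\pa = \bigvee_{x\in\dom(v)}(v(x)\wedge \lb x = u\rb_\pa) = a \wedge \lb u = u\rb_\pa$, and $\lb u = u\rb_\pa \in D$ by Lemma \ref{PropertiesTvaluedmodels}(i), this join equals $a$ provided $a \le \lb u=u\rb_\pa$; since $\lb u=u\rb_\pa \in \{\one,\zero\}$ and it is in $D$, it equals $\one$, so indeed $\lb u \in v\rb_\pa = a$.

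I do not anticipate a serious obstacle here — the statement is essentially a bookkeeping consequence of how $^*$ is engineered on non-designated values (it sends them to $\one$) together with the already-established paraconsistency witness. The only mildly delicate point is ensuring, in part (i), that one has correctly invoked the ultra-designated hypothesis (via Lemma \ref{lemma: ultrafilter of an MTV algebra}, $\mathbf{A}\setminus D = \{\zero\}$) so that ``$a\notin D$'' genuinely forces $a = \zero$ and hence $a^* = \one$; in the merely designated (non-ultra) case the same conclusion $a^* = \one$ still holds directly from Definition \ref{Definition: MTV algebra}(ii), so the argument is robust either way.
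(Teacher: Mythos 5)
Your proposal is correct and follows essentially the same route as the paper: part (i) unwinds the quotient semantics to $\lb u\in v\rb_\pa\notin D$ and applies the clause $a\notin D\Rightarrow a^*=\one$ of Definition \ref{Definition: MTV algebra}, and part (ii) uses the same witness $v=\{\langle u,a\rangle\}$ with $a\in D\setminus\{\one\}$ so that $\lb u\in v\rb_\pa=a=\lb\neg(u\in v)\rb_\pa\in D$. Your extra check that $\lb u=u\rb_\pa=\one$ (rather than merely $\in D$) is a detail the paper's proof asserts without comment, and it is handled correctly.
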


\begin{proof}
(i) Consider $[u], [v] \in \VA{\A}/\!\sim$ so that $\VA{\A}/\!\sim~ \not\models [u] \in  [v]$. By Fact \ref{fact: satisfactions of atomic formulas}(ii), $([u], [v]) \notin R_\in$, which implies that $\VA{\A} \not\models u \in v$, by Definition \ref{Definition: Quotient space identity and belongniness}(ii). 
Hence, $\lb u \in v \rb \notin D$. Therefore by the definition of designated $\co$-algebra, $\lb\neg(u \in v)\rb = \lb u \in v \rb^* = \one \in D$.
This leads to the fact that $\VA{\A} \models \neg (u \in v)$, i.e., $([u], [v]) \in R_{\notin}$, i.e., $\VA{\A}/\!\sim~ \models [u] \notin  [v]$.

(ii) Let us fix an element $a \in D\setminus \{\one\}$. By definition we have $a^* = a$. Now consider any element $u \in \VA{\A}$ and fix the element $v \in \VA{\A}$, where $v = \{\langle u, a \rangle\}$. Then, 
\[\lb u \in v \rb = (v(u) \wedge \lb u = u \rb) = (a \wedge \one) = a \in D.\]
Moreover, $\lb \neg(u \in v) \rb = a^* = a \in D$. Hence, we get that $\VA{\A} \models (u \in v) \wedge \neg (u \in v)$, i.e., $\VA{\A}/\!\sim~ \models [u] \in  [v]$ and $\VA{\A}/\!\sim~ \models [u] \notin  [v]$ both hold.
\end{proof}

The theorem above shows that,  if there exists an element in the designated set other than the top element, then $R_\in \cap R_{\notin} \neq \varnothing$.

\begin{teo}\label{theorem: interpretations of the connectives}
Let $(\mathbb{A},D)$ be an ultra-designated $\co$-algebra. Then, for a sequence $\vec{[u]}=([u_1],[u_2],...)$ in  $\VA{\A}/\!\sim$ we have the following:
    
\begin{enumerate}
\item[(i)] $\VA{\A}/\!\sim~ \models (\varphi \rightarrow \psi) (\vec{[u]})$ iff $\VA{\A}/\!\sim~ \not\models \varphi(\vec{[u]})$ or $\VA{\A}/\!\sim~ \models \psi(\vec{[u]})$,

\item[(ii)] $\VA{\A}/\!\sim~ \models (\varphi \wedge \psi) (\vec{[u]})$ iff $\VA{\A}/\!\sim~ \models \varphi(\vec{[u]})$ and $\VA{\A}/\!\sim~ \models \psi(\vec{[u]})$,

\item[(iii)] $\VA{\A}/\!\sim~ \models (\varphi \vee \psi) (\vec{[u]})$ iff $\VA{\A}/\!\sim~ \models \varphi(\vec{[u]})$ or $\VA{\A}/\!\sim~ \models \psi(\vec{[u]})$,

\item[(iv)] if   $\VA{\A}/\!\sim~ \not\models \varphi(\vec{[u]})$, then $\VA{\A}/\!\sim~ \models \neg \varphi(\vec{[u]}) $,

\item[(v)] $\VA{\A}/\!\sim~ \models  (\forall x_k \varphi)(\vec{[u]})$ iff  $\VA{\A}/\!\sim~ \models \varphi\big{(}\vec{[u]}( d/k) \big{)}$, for all $d \in \VA{\A}/\!\sim$, where $\vec{[u]}(d/k)$ is identified with the sequence $([u_1],\ldots, [u_{k-1}],d,[u_{k+1}], \ldots)$, and

\item[(vi)] $\VA{\A}/\!\sim~ \models  (\exists x_k \varphi)(\vec{[u]})$ iff  $\VA{\A}/\!\sim~ \models \varphi\big{(}\vec{[u]}( d/k) \big{)}$, for some $d \in \VA{\A}/\!\sim$.   
\end{enumerate}
\end{teo}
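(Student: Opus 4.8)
The plan is to reduce each of the six clauses, via Definition \ref{Definition: semantics of the quotient space}, to a purely algebraic statement about membership in the filter $D$ of the ultra-designated $\co$-algebra $(\A,D)$. Recall that $\VA{\A}/\!\sim~\models \chi(\vec{[u]})$ holds precisely when $\lb \chi(\vec{u})\rb \in D$, a condition already known to be independent of the chosen representatives by Theorem \ref{theorem: validity of LL}. Unwinding the homomorphic clauses for $\lb\cdot\rb$, the six items become: (i) $a\Rightarrow b\in D$ iff $a\notin D$ or $b\in D$; (ii) $a\wedge b\in D$ iff $a\in D$ and $b\in D$; (iii) $a\vee b\in D$ iff $a\in D$ or $b\in D$; (iv) $a\notin D$ implies $a^*\in D$; (v) $\bigwedge_{i} a_i\in D$ iff $a_i\in D$ for every $i$; and (vi) $\bigvee_i a_i\in D$ iff $a_i\in D$ for some $i$ --- where in the last two items the $a_i$ range over the truth-values $\lb\varphi(\vec{[u]}(d/k))\rb$ as $d$ runs through $\VA{\A}$.

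The key simplification is Lemma \ref{lemma: ultrafilter of an MTV algebra}: since $D$ is an ultrafilter, $\mathbf{A}\setminus D=\{\zero\}$, so ``$a\notin D$'' means exactly ``$a=\zero$''. With this, (i) is immediate from the definition of $\Rightarrow$ in a $\co$-algebra --- $a\Rightarrow b=\zero$ exactly when $a\neq\zero$ and $b=\zero$ --- and (iv) follows from $\zero^*=\one\in D$ (only one direction, as $^{*}$ is non-classical). For (ii), left-to-right uses upward closure of $D$ together with $a\wedge b\le a,b$, and right-to-left uses closure of $D$ under $\wedge$. For (iii), right-to-left is upward closure applied to $a\le a\vee b$, while left-to-right is the contrapositive of $\zero\vee\zero=\zero$. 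For (vi), right-to-left is upward closure applied to $a_{i_0}\le\bigvee_i a_i$, and left-to-right is the contrapositive of $\bigvee_i\zero=\zero$. All of this is routine filter bookkeeping.

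The one clause that genuinely invokes the infinitary structure of a $\co$-algebra is the left-to-right direction of (v): if every $a_i\neq\zero$, then by the clause on infima in the definition of a $\co$-algebra, $\bigwedge_i a_i\neq\zero$, hence $\bigwedge_i a_i\in D$; the right-to-left direction is again upward closure. The only point requiring a moment of care is that, although $d$ ranges over the proper class $\VA{\A}$, the family $\{\lb\varphi(\vec{[u]}(d/k))\rb : d\in\VA{\A}\}$ is a \emph{set} (a subset of $\mathbf{A}$), so the infinitary $\co$-algebra conditions and the completeness of $\A$ apply to it verbatim, and the operators $\bigwedge$, $\bigvee$ in the clauses for $\forall$ and $\exists$ are well-defined. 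I do not expect any substantial obstacle: each item is a one-line consequence of the explicit definitions of $\Rightarrow$ and $^{*}$, the basic lattice laws, and the identity $D=\mathbf{A}\setminus\{\zero\}$; the only thing to watch is keeping straight which half of each biconditional needs which property.
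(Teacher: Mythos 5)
Your proposal is correct and follows essentially the same route as the paper's proof: reduce each clause via Definition \ref{Definition: semantics of the quotient space} to a statement about membership in $D$, invoke Lemma \ref{lemma: ultrafilter of an MTV algebra} to identify $\mathbf{A}\setminus D$ with $\{\zero\}$, and then read each item off from the definitions of $\Rightarrow$ and $^*$ together with the infinitary clauses of Definition \ref{Definition: MTV algebra} for the quantifier cases. The only blemish is a naming slip in item (v): the implication that needs the infimum clause of a $\co$-algebra is the one from ``$a_i \in D$ for all $i$'' to ``$\bigwedge_i a_i \in D$'' (the right-to-left direction of the biconditional as you state it), while the left-to-right direction is the upward-closure one --- but you attach the correct argument to each implication, so the mathematics is fine.
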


\begin{proof}

Since $D$ is an ultrafilter of $\A$, by Lemma \ref{lemma: ultrafilter of an MTV algebra} we have that $\mathbf{A} \setminus D = \{\zero\}$, where $\mathbf{A}$ is the underlying set of $\A$.
\begin{align*}
   (i)~~ \VA{\A}/\!\sim~ \models (\varphi \rightarrow \psi) (\vec{[u]}) & \mbox{ iff}~ \VA{\A}~\models (\varphi \rightarrow \psi) (\vec{u})\\
    & \mbox{ iff}~\lb (\varphi \rightarrow \psi) (\vec{u}) \rb \in D\\
    & \mbox{ iff}~\lb (\varphi \rightarrow \psi) (\vec{u}) \rb \neq \zero, \mbox{ since}~\mathbf{A} \setminus D = \{\zero\}\\
    & \mbox{ iff}~ \big{(}\lb \varphi(\vec{u}) \rb \Rightarrow \lb \psi(\vec{u}) \rb \big{)} \neq \zero\\
    & \mbox{ iff}~\lb \varphi(\vec{u}) \rb = \zero \mbox{ or}~\lb \psi(\vec{u}) \rb \neq \zero\\
    & \mbox{ iff}~\VA{\A} \not\models \varphi(\vec{u}) \mbox{ or}~\VA{\A} \models \psi(\vec{u}), \mbox{ applying}~\mathbf{A} \setminus D = \{\zero\}\\
    & \mbox{ iff}~\VA{\A}/\!\sim~ \not\models \varphi(\vec{[u]}) \mbox{ or}~\VA{\A}/\!\sim~ \models \psi(\vec{[u]}).
\end{align*}
The proofs of (ii), (iii), and (iv) can also be done similar to (i), using Lemma \ref{lemma: ultrafilter of an MTV algebra}.
\begin{align*}
    (v)~~ \VA{\A}/\!\sim~ \models  (\forall x_k \varphi)(\vec{[u]}) & \mbox{ iff}~ \VA{\A}~\models (\forall x_k \varphi)(\vec{u})\\
    & \mbox{ iff}~ \bigwedge_{v \in \VA{\A}}\lb \varphi\big{(}\vec{u}(v/k) \big{)} \rb \in D,\\
    & \qquad \mbox{ where $\vec{u}(v/k)$ is the sequence $(u_1,\ldots, u_{k-1},v,u_{k+1}, \ldots)$}\\
    & \mbox{ iff}~\lb \varphi\big{(}\vec{u}(v/k) \big{)} \rb \in D, \mbox{ for all}~v \in \VA{\A},\\
    & \qquad \mbox{ the forward direction is immediate, for the other direction}\\
    & \qquad \mbox{ we use property (iii) of Definition \ref{Definition: MTV algebra} and}~ \mathbf{A} \setminus D = \{\zero\}\\
    & \mbox{ iff}~\VA{\A} \models \varphi\big{(}\vec{u}(v/k) \big{)}, \mbox{ for all}~v \in \VA{\A}\\
    & \mbox{ iff}~\VA{\A}/\!\sim~ \models \varphi\big{(}\vec{[u]}(d/k) \big{)}, \mbox{ for all}~d \in \VA{\A}/\!\sim.
\end{align*}

(vi) It can be proved as (v), because for any subset $\{a_i~:~i \in I\}$ of $\mathbf{A}$, $\bigvee\limits_{i \in I}a_i \in D$ implies that there exists $j \in I$ such that $a_j \in D$ as $\mathbf{A} \setminus D = \{\zero\}$.
\end{proof}

Observe that only the property (iv) of Theorem \ref{theorem: interpretations of the connectives} is not expressed as a necessary and sufficient condition. Indeed, the missing direction of  property (iv) fails. For a  concrete example, consider the formula $\varphi(x, y) := x \in y$ and use Theorem \ref{theorem: belongingness and its negation in the quotient space}(ii). Moreover, we conclude this section, by pointing out that the same exact sentences $\varphi$ and $\psi$ of  Lemma  \ref{lemma: (T, BA) is paraconsistent} witness that the underlying logic of the set theory having a model $\VA{\A}/\!\sim$ is paraconsistent.

\section{Comparison with other models of paraconsistent set theories}

In this section, we compare the paraconsistent models we developed in this paper with models of paraconsistent set theories that we find in the literature. In particular, we will address the models produced by Priest's \textit{model-theoretic} approach given that they are the only paraconsistent models of set theory in which all $\ZF$-axioms are valid. We will argue that our approach overcomes the limits of Priest's model-theoretic approach to paraconsistent set theory. In short we will argue that in the models presented in this paper $\ZF$ holds consistently and not only paraconsistently. By this we mean that in our models $\ZF$ are only valid and not, also, not valid. In this sense, those presented here are the first proper (from a classical perspective) paraconsistent models of $\ZF$.

The model-theoretic approach has been put forward in \cite[Section ~18.4]{Priest2006-PRIICA} and \cite[Section~ 11]{Priest2017-PRIWIT-18}. In order to keep things short, we will assume some familiarity with these papers. The basic idea of the model-theoretic approach of Priest consists in finding an equivalence relation which allows us to collapse a model of $\ZF$ into a model of $\mathsf{NLP}_=\footnote{By $\mathsf{NLP}_{=}$ we denote the set theory that we obtain  by combining
the axioms of na\"ive set theory, i.e., \textsf{Extensionality} and \textsf{Comprehension}
with the logical axioms of LP$_{=}$, where LP$_{=}$ denotes the first-order version of the Logic of Paradox, LP extended with a binary predicate $=$, where $x=y$ receives value $\one$ or $\frac{1}{2}$ just in case $x$ equals $y$ (in the meta-theory).} +\mathsf{ZF}$ and which, moreover, contains a large fragment of the cumulative hierarchy as an inner model. Priest proposes two equivalence relations: the type-lift and the Hamkins type-lift.

Let us sketch one of these constructions. In the case of the type-lift, we start with a model of \textsf{ZF} with two inaccessible cardinals, say, $\kappa_1$ and $\kappa_2$. While everything is preserved below $V_{\kappa_1}$, on the other hand, everything is collapsed between $V_{\kappa_{1}}$ and $V_{\kappa_{2}}$.  The resulting object of such a collapse, call it $a$, is what witnesses the paraconsistency of the model. Instead, the standard hierarchy below $\kappa_1$ is responsible for the validity of \textsf{ZF} in a cumulative hierarchy. 
Applying this construction we obtain the following important result.

\begin{teo}[\cite{Priest2006-PRIICA}]\label{Priests models} Suppose that $\mathcal{M}$ is a classical model of $\mathsf{ZF}$ containing two inaccessible cardinals $\kappa_1$ and $\kappa_2$. Then there is an LP-model $\mathcal{M}^{\sim} = \langle D^{\sim}, I^{\sim}\rangle$  such that:

\begin{enumerate}
    \item[(i)]  $\mathcal{M}^{\sim}$ is a model of \emph{ZF} $+$ $\mathsf{NLP}_{=}$ and
    \item[(ii)] $\mathcal{M}^{\sim}$ contains a model $\mathcal{N}$ where $\mathcal{N}$ is a classical model of $\mathsf{ZF}$.
\end{enumerate}

\end{teo}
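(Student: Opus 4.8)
The plan is to reconstruct Priest's model-theoretic collapse (the type-lift) as an explicit quotient of the classical model $\mathcal{M}$, and then verify the two claims by a mixture of induction on rank and direct computation of truth-values in the Logic of Paradox. First I would fix the two inaccessibles $\kappa_1 < \kappa_2$ inside $\mathcal{M}$ and define an equivalence relation $\approx$ on the domain $D = M$ that is the identity on every set of rank $< \kappa_1$, that collapses all sets whose rank lies in the interval $[\kappa_1,\kappa_2)$ to a single new object $a$, and that then propagates upward: two sets of rank $\geq \kappa_2$ are $\approx$-related iff they have the same transitive closure modulo the identification of the $[\kappa_1,\kappa_2)$-block with $a$. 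One should check $\approx$ is genuinely an equivalence relation and is set-like, so the quotient $D^\sim = D/\!\approx$ is well-defined. The interpretation $I^\sim$ assigns to $\in$ and $=$ three-valued extensions: $[x]=[y]$ gets value $\tfrac12$ exactly when $x \approx y$ but $x \ne y$ in $\mathcal{M}$ (and $\one$ when literally equal), and $[x]\in[y]$ gets value $\tfrac12$ precisely when the collapse has created a ``clash'' — some representative of $[x]$ is a member of some representative of $[y]$ while some other representative is not. The object $a = [V_{\kappa_1}]$ (or any set in the collapsed band) is the witness of inconsistency, since one checks $a \in a$ receives value $\tfrac12$.

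The next step is to establish (ii): the inner model $\mathcal{N}$. I would take $\mathcal{N}$ to be the image of $V_{\kappa_1}^{\mathcal{M}}$ under the quotient map. Because $\approx$ restricted to $V_{\kappa_1}$ is the identity, the quotient map is injective there, and — crucially — no collapsed object can be $\in$-related (even paraconsistently) to a rank-$<\kappa_1$ set in a way that introduces value $\tfrac12$, since membership in a set of rank $<\kappa_1$ is decided already below $\kappa_1$. So on $\mathcal{N}$ both $=$ and $\in$ are two-valued, $\mathcal{N} \models$ classical first-order logic, and $\mathcal{N}$ is (isomorphic to) $V_{\kappa_1}^{\mathcal{M}}$, which is a model of $\mathsf{ZF}$ because $\kappa_1$ is inaccessible. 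This part is essentially bookkeeping once the collapse is set up correctly.

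The substantial work is (i): verifying that $\mathcal{M}^\sim \models \mathsf{ZF}$ in the LP sense (every axiom receives a designated value) and that $\mathcal{M}^\sim \models \mathsf{NLP}_{=}$, i.e.\ Extensionality and the na\"ive Comprehension schema hold under LP logic. For the $\mathsf{ZF}$-axioms I would argue that each axiom's truth-value computation can be ``pushed down'' to $\mathcal{M}$ itself: because $\mathcal{M} \models \mathsf{ZF}$ classically and the quotient is definable, the witnesses required by Pairing, Union, Power Set, Separation, Collection, Infinity, Foundation in $\mathcal{M}$ descend to witnesses in $\mathcal{M}^\sim$, and the three-valued semantics of LP never turns a classically true implication false — it can only add $\tfrac12$'s, which are still designated. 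Foundation needs the most care, since the collapsed object $a$ satisfies $a \in a$ paraconsistently; the point is that Foundation, stated with $\to$, still gets a designated value because the relevant conditional's antecedent or consequent lands on $\tfrac12$ rather than $\zero$. For na\"ive Comprehension one uses the fact that in the collapsed region every candidate ``set of all $x$ with $\varphi(x)$'' can be witnessed by $a$ itself, since $a$ bears the membership relation (in the $\tfrac12$ sense) to essentially everything in the collapsed band, making the biconditional receive value $\tfrac12$ throughout. Extensionality in LP is weak enough (it only needs to hold up to designated values) that the collapse respects it.

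I expect the main obstacle to be the Foundation and Comprehension verifications simultaneously: one must choose the collapse so that it is drastic enough to validate na\"ive Comprehension (forcing a genuinely self-membered, ``universal-like'' object $a$) yet controlled enough that the cumulative hierarchy below $\kappa_1$ survives intact and the $\mathsf{ZF}$-axioms — Foundation in particular — still get designated values everywhere. Getting the definition of $\approx$ on sets of rank $\geq \kappa_2$ exactly right, so that the upward propagation is coherent and the three-valued $\in$-extension is well-defined on equivalence classes, is the delicate combinatorial heart of the argument; the rest is a careful but routine LP truth-value audit of each axiom, which is why I would cite \cite{Priest2006-PRIICA} for the full details rather than reproduce them.
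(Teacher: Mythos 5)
First, a point of orientation: the paper does not prove this theorem at all — it is quoted from Priest (\cite{Priest2006-PRIICA}) purely for comparison with the \textsf{Cobounded}-algebra-valued models, and the text only gives the two-sentence sketch of the type-lift. So your proposal is being measured against Priest's original argument rather than anything in this paper.

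The genuine gap in your reconstruction is that it omits the engine of Priest's proof, the \emph{Collapsing Lemma} for LP: if $\mathcal{M}$ is a classical model and $\approx$ is \emph{any} equivalence relation on its domain, and one defines atomic truth in the quotient by ``$[x]\in[y]$ is true iff $x'\in y'$ for some representatives, and false iff $x'\notin y'$ for some representatives'' (similarly for $=$), then every sentence true in $\mathcal{M}$ receives a designated value in $\mathcal{M}^{\approx}$. This lemma, proved by induction on formula complexity, is what makes the ZF half of (i) — Foundation included — immediate, with no axiom-by-axiom audit. Your substitute for it, the remark that LP ``never turns a classically true implication false --- it can only add $\tfrac12$'s,'' is exactly the statement that needs proof; it is not a general feature of LP semantics but a consequence of the specific representative-based definition of atomic truth values, and it fails for collapses that are not set up this way. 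Deferring ``the delicate combinatorial heart'' and the ``routine truth-value audit'' to the citation means the one nontrivial step is never carried out.

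A second, related problem is your choice of domain and equivalence relation. Priest takes the domain to be $V_{\kappa_2}^{\mathcal{M}}$ and collapses all of $V_{\kappa_2}\setminus V_{\kappa_1}$ to the single point $a$, leaving $V_{\kappa_1}$ pointwise fixed; the two inaccessibles then have transparent roles ($\kappa_2$ makes the uncollapsed structure a classical ZF model to which the Collapsing Lemma applies, $\kappa_1$ supplies the classical inner model $\mathcal{N}\cong V_{\kappa_1}$). Your version keeps all sets of rank $\geq\kappa_2$ and ``propagates upward'' via transitive closures. This is not Priest's relation, it is not obviously an equivalence relation compatible with the quotient semantics, and it leaves the second inaccessible with no clear job to do. The verification of na\"ive Comprehension via $a$ and the treatment of $\mathcal{N}$ in part (ii) are fine in spirit, but they rest on the collapse being the simple band-collapse on $V_{\kappa_2}$, not the upward-propagating one you describe.
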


For completeness, let us very briefly address the Hamkins type-lift. An important fact about this second construction is that  Theorem \ref{Priests models} also holds for it.  The Hamkins type-lift models constitutes an improvement over the type-lift models given that different sets witness different instances of $\mathsf{Comprehension}$ thus providing more discriminating models; which was not the case for the models obtained by means of the type-lift. Although we have an advance on the set theoretical side, however, this new construction manifests a decisive drawback on the logical side. Indeed, the treatment of  identity is highly non-standard, since Leibniz's law of indiscernibility of identicals fails in the models obtained by the Hamkins type-lift construction. 
On the other hand, for any  $\VA{\A}/\!\sim$, Leibniz's law of indiscernibility of identicals holds. This provides a classical treatment for the notion of identity, as witnessed by Theorem \ref{theorem: validity of LL}. For this reason, we concentrate on the comparison between our models and the original Priest's type-lift models. 

Before presenting the point of difference between the models presented in this paper and the type-lift ones, it is worth reporting Priest's view, who was not completely satisfied with the type-lift constructions. 


\begin{quote}Clearly, the collapsed model just constructed is not a very interesting one.The much over worked $a$ is the witness set for every condition. But it suffices to establish that there are interpretations of both ZF and naive set theory. (\cite{Priest2017-PRIWIT-18}, p. 48)
\end{quote}

We believe that the models discussed here fares better than Priest's in providing a paraconsistent version of $\ZF$ for the following reasons.

\medskip

\begin{enumerate}
    \item  The paraconsistency of the present models does not come from a single set on top of the hierarchy, but it is widespread in the cumulative hierarchy and witnessed by many different sets along the hierarchy (see Theorem \ref{Theorem: models are paraconsistent}). In a slogan, we can say that these algebra-valued models provide a paraconsistent cumulative hierarchy, while Priest's ones a cumulative hierarchy together with a paraconsistent set (which is somewhat isolated from the rest). 
    
    \medskip
    
    \item  The models presented in this paper do not validate the axiom of (unrestricted) \textsf{Comprehension}. Thus, the corresponding set theory is different from that of Priest's models. This is an important point from a classical perspective, since these results show how close we can get to a classical cumulative hierarchy, but with a non-classical logic. 
 
     \medskip

     \item Not only the axioms of \textsf{ZF} are all valid in any  $\VA{\A}/\!\sim$, but it is also important to observe that their negation do not hold in these models. In this case, we can say that \textsf{ZF} holds \emph{consistently}. This is a trivial observation in the classical context, which, however, is not obvious in a paraconsistent context. For example, in the models obtained by a type-lifting construction  the \textsf{Separation Schema} holds together with its negation. In this case we can say that $\ZF$ holds \emph{paraconsistently}.

\end{enumerate}

\medskip

\noindent This last point has been objects of criticism. Very recently Incurvati wrote the following. 

\begin{quote}Indeed, as Priest (2017: 98) points out, once inconsistent sets enter the picture, it can be shown that if ZF’s Separation Schema is true, it is both true and false. What this brings to light is that the paraconsistent set theorist needs, after all, to say more about what the universe of sets looks like. It is not enough to simply suppose that it contains some paradigmatic inconsistent sets and has the cumulative hierarchy as an inner model. (\cite{Incurvati2020-INCCOS}, p.126) \end{quote} 

We believe that the structures built in this paper address exactly the issue raised by the above quotation. Not only we have paraconsistent models where $\ZF$ holds consistently, but the similarity these structures bear with $\textbf{V}$ leave no mystery with respect to what they look like: a very familiar cumulative hierarchy. 




Since these new structures satisfy both requirements of being  paraconsistent and being non-trivial models of $\ZF$, i.e., validating these axioms consistently (as witnessed by the validity of Theorem 3.11 and Theorem 4.5), we can claim that they are the first example of non-trivial paraconsistent models of $\ZF$.


\section{Generalising the approach}

We end the paper with a few questions that the present approach to non-classical models of set theory suggests. Besides the construction of non-trivial paraconsistent models of set theory in which $\ZF$ hold consistently, this paper introduces a novelty in the algebra-valued constructions: the change of the interpretation function. We applied this idea here to the construction of paraconsistent models of $\ZF$, but nothing prevents us to generalise the technique to fit other logical environments.\footnote{Some interesting preliminary results have been obtain with respect to Heyting-valued models. We leave to a next time a full discussion of this other case.} 
Table \ref{table:in B-valued models} and Table \ref{table:in W-algebra-valued models} give an overview of the study made in this paper, which help to analyze the open questions raised in this section.

\begin{table}[H]
    \centering
    \begin{tabular}{|c|c|c|c|c|}
  \hline
  & Axioms of $\ZF$ & Axioms of $\overline{\ZF}$ & $\llaw$ & $\mathbf{L}(\lb\cdot\rb, \VA{\A})$ \\
  \hline
  $\lb \cdot \rb_\ba$ & Valid & Valid & Valid & Not known in general.\\
  & & & & Classical, for atomic Boolean algebras. \\
  \hline
  $\lb \cdot \rb_\pa$ & Valid & Valid & Valid & Not known in general.\\
  & & & & Classical, for atomic Boolean algebras. \\
  \hline
    \end{tabular}
    \caption{With respect to Boolean-valued models, $\VA{\A}$.}
    \label{table:in B-valued models}
\end{table}

\begin{table}[H]
  \centering
     \begin{tabular}{|c|c|c|c|c|}
  \hline
  & Axioms of $\ZF$ & Axioms of $\overline{\ZF}$ & $\llaw$ & $\mathbf{L}(\lb\cdot\rb, \VA{\A})$ \\
  \hline
  $\lb \cdot \rb_\ba$ & $\nff$-$\ZF$ is valid & $\nff$-$\ZF + \overline{\mathsf{Extensionality}}$ & Fails for some $\varphi$ & Paraconsistent \\
  &  & is valid & &\\
  \hline
  $\lb \cdot \rb_\pa$ & $\ZF - \mathsf{Extensionality}$ & Valid & Valid & Paraconsistent \\
  & is valid & & &\\
  \hline
    \end{tabular}
    \caption{With respect to ultra-designated $\co$-algebra valued models, $\VA{\A}$, where $\A$ has more than two elements.}
    \label{table:in W-algebra-valued models}
\end{table}

The first one deals with the scope of the validity of the Zermelo-Fraenkel axioms and their correct formulation. Indeed,  notice that $\lb \cdot \rb_\ba$ only provides Boolean-valued models of $\ZF$, while $\lb \cdot \rb_\pa$ both Boolean and ultra-designated $\co$-algebra-valued models of $\overline{\ZF}$. From this perspective both $\lb \cdot \rb_\pa$ and $\overline{\ZF}$ are more inclusive since they capture a larger class of models for Zermelo-Fraenkel set theory. 

\medskip
\textbf{Question 1} Is there a maximal class $\mathcal{X}$ of algebras and a couple $(\ZF^{X}, \lb \cdot \rb_{\mathrm{X}})$, with $\ZF^{X}$ an axiomatic system classically equivalent to $\ZF$ and $\lb \cdot \rb_{\mathrm{X}}$ an interpretation function, such that  $\VA{\mathbb{X},~\lb \cdot \rb_{\mathrm{X}}} \models \ZF^{X}$, for $\mathbb{X} \in \mathcal{X}$?
\medskip

Notice that there is no obvious reason why this attempt of maximising the class of models that validate an appropriate version of $\ZF$ should yield a unique result. If this is indeed the case, however, we would obtain an interesting relativization of the notion of independence. 

\medskip
\textbf{Question 2} Assume that there exist two couples $(\ZF^{X}, \lb \cdot \rb_{\mathrm{X}})$ and $(\ZF^{Y}, \lb \cdot \rb_{\mathrm{Y}})$ that define two maximal but incompatible classes of models $\mathcal{M}_{\mathcal{X}}$ and $\mathcal{M}_{\mathcal{Y}}$, each validating the appropriate version of $\ZF$ and both extending the class of Boolean-valued models. Then, is it possible to find a sentence in the pure language of set theory, $\mathcal{L}_{\in}$, that is independent from $\ZF^{X}$ but not from $\ZF^{Y}$? In other terms, is it possible to show that the notion of independence is not absolute? 
\medskip

An interesting aspect of the above question is that it does not necessarily need a non-classical logical environment. As a matter of fact, once we start modifying the interpretation function, we can raise the same issue in a purely classical context.

\medskip
\textbf{Question 2$^*$}
Is there an assignment function $\lb \cdot \rb_{\mathrm{BA}'}$ from a $\mathcal{L}_{\mathbb{B}}$-language into a Boolean algebra $\mathbb{B}$, such that all $\ZF$ axioms receive value $\one$, but where there are formulas in the pure language of set theory, $\mathcal{L}_\in$,  which receive value $\one$ under $\lb \cdot \rb_{\mathrm{BA}'}$, but not under $\lb \cdot \rb_{\ba}$?
\medskip

Another interesting question about these non-classical models is whether they can be used to produce (possibly new) classical models. 

\medskip
\textbf{Question 3}\footnote{We thank Joel D. Hamkins for this interesting suggestion.}
Is it possible to define a two step iteration in which the first step is non-classical, but the second is classical? More concretely, is it possible to find a Boolean algebra $\mathbb{B}$ within a ultra-designated $\co$-algebra-valued model $\VA{\mathbb{T},~\lb \cdot \rb_{\pa}}$ and to build from there a Boolean-valued model $\big{(}\VA{\mathbb{T},~\lb \cdot \rb_{\pa}}\big{)}^{(\mathbb{B},~\lb \cdot \rb_{\ba})}$? If this is the case, is it possible to obtain this new Boolean-valued model as a Boolean-valued extension of $\mathbf{V}$? In other terms, is $\big{(}\VA{\mathbb{T},~\lb \cdot \rb_{\pa}}\big{)}^{(\mathbb{B},~\lb \cdot \rb_{\ba})}$ a new classical model? 
\medskip

Finally, it would be interesting to precisely determine  the extension of the class of models that we investigated in this paper.

\medskip
\textbf{Question 4}
Is there an algebraic way to  describe the class of structures that validate  $\overline{\ZF}$ under $\lb \cdot \rb_\pa$?
\medskip



\medskip

\noindent \textbf{Acknowledgments}.
 The first
author acknowledges support from the FAPESP grant. n. 2017/23853-0. The second author wants to acknowledge FAPESP for providing him a Visiting Researcher grant (n. 2016/25891-3) to spend one year at
the Philosophy Department of the University of Campinas. The third author
acknowledges support from FAPESP, Jovem Pesquisador grant (n. 2016/25891-
3), and from CNPq grant (n. 301108/2019-6).

\bibliographystyle{apalike}
\bibliography{bibliografia.bib}

\end{document}